\numberwithin{equation}{section}
\theoremstyle{plain}
\newtheorem{theorem}{Theorem}[section]
\newtheorem{proposition}[theorem]{Proposition}
\newtheorem{lemma}[theorem]{Lemma}
\newtheorem{corollary}[theorem]{Corollary}
\theoremstyle{definition}
\newtheorem{definition}[theorem]{Definition}
\newtheorem{example}[theorem]{Example}
\newtheorem{question}[subsection]{Question}
\newcommand\E{\mathbb{E}}
\newcommand\Z{\mathbb{Z}}
\newcommand\R{\mathbb{R}}
\newcommand\Q{\mathbb{Q}}
\newcommand\T{\mathbb{T}}
\newcommand\C{\mathbb{C}}
\newcommand\N{\mathbb{N}}
\newcommand\CB{\mathcal{B}}
\newcommand\CF{\mathcal{F}}
\newcommand\Y{\mathcal{Y}}
\newcommand\CY{\mathcal{Y}}
\newcommand\CS{\mathcal{S}}
\newcommand\ZZ{\mathrm{\bf Z}}
\newcommand\YY{\mathrm{\bf Y}}
\newcommand\XX{\mathrm{\bf X}}
\newcommand\IP{\mathrm{IP}}
\newcommand{\one}{\boldsymbol{1}}
\newcommand\U{\mathsf{U}}
\newcommand\EIP{{\E_{n\in \IP_{\Phi_N}\big((n_j)_{j\in\N}\big)}}}
\newcommand\IPPhi{\IP_{\Phi_N}}
\newcommand{\rat}{\mathrm{rat}}
\newcommand{\Id}{\mathrm{Id}}
\DeclareMathOperator{\sEIP}{\mathbb{E}_{n\in \IP_{\Phi_N}}}
\newcommand{\dip}{\underline{d}_{\IP_\Phi}}
\renewcommand{\d}{\,\mathsf{d}}
\begin{document}


\baselineskip=17pt


\title[Ergodic averages and large intersection property along $\IP$ sets]{Ergodic averages and the large intersection property along $\IP$ sets.}

\author[B. Kra]{Bryna Kra}
\address{Department of Mathematics\\ Northwestern university \\ Evanston, IL 60208 }
\email{kra@math.northwestern.edu}

\author[O. Shalom]{Or Shalom}
\address{Department of Mathematics\\ Bar Ilan University \\ 
Ramat Gan \\
5290002, Israel}
\email{Or.Shalom@math.biu.ac.il}

\date{\today}

\begin{abstract} 
We study multiple ergodic averages along $\IP$ sets, meaning we restrict iterates in the averages to all finite sums of some infinite sequence of natural numbers.  We give  criteria for convergence and divergence in mean of these multiple averages and  derive sufficient conditions for  convergence to the projection onto the space of invariant functions.  For a class of sequences that, roughly speaking, only have rational obstructions to such a limit, we show that the behavior is controlled by nilsystems.  
We also consider pointwise convergence, obtaining convergence and a formula for a set of functions on nilsystems that are dense in $L^2$. Finally, we show that certain correlations have optimally large intersections along an $\IP$ set.
\end{abstract}

\subjclass[2020]{Primary 37A15, 11B30; Secondary 28D15, 37A35.}

\keywords{multiple ergodic average, $\IP$-set, nilsystem}

\maketitle


\section{Introduction}
\subsection{$\IP$ convergence}
Since Furstenberg's proof~\cite{furstenberg1977ergodic} of Szemer\'edi's Theorem~\cite{szemeredi1975sets} using ergodic theory, dynamical methods have been used to prove numerous generalizations (some examples include~\cite{furstenberg1978ergodic, bergelson1996polynomial,BHK, FK85, furstenberg1991density}). Among the many generalizations, we focus on the $\IP$ version of Szemer\'edi's theorem. An $\IP$, short for {\em infinite parallelepiped} or {\em idempotent} depending on the context, is defined as all the finite sums of some sequence $(n_j)_{j\in\mathbb{N}}$ and an $\IP$-set is a set containing an $\IP$ (for technical reasons, we deviate from the standard definition and allow an $\IP$ to be a multiset). 
Furstenberg and Katznelson~\cite{FK85} showed that for any $\IP$-set, any set of natural numbers with positive upper Banach density contains arbitrary long arithmetic progressions whose common step lies in that $\IP$.  To prove this refinement, they change the type of convergence studied, introducing taking a limit along $\IP$-sets. This type of limit has been used in other contexts to obtain further combinatorial and ergodic generalizations, including for example in~\cite{BergelsonIPSzemeredi}.

In using ergodic methods to prove these results, one is naturally led to study {\em multicorrelation sequences}, meaning sequences of the form 
\begin{equation}
\label{eq:multicor}
a(n) = \int_\XX \prod_{i=0}^k f_i (T^{in}x)\d\mu(x),
\end{equation}
where 
$\XX=(X,\mathcal{B},\mu,T)$ is an invertible measure preserving system and  $f_0,\dots,f_k\in L^\infty(\mu)$ (we defer precise definitions and further background to Section~\ref{sec:background}).
Taking all functions to be the indicator function of some set with positive measure, showing that the $\liminf$ of a multicorrelation sequence is positive corresponds to proving a recurrence statement, and such results have led to generalizations of Szemer\'edi's theorem along subsequences. Convergence in mean has been shown for many multicorrelation sequences,  including~\cite{host2005nonconventional, ziegler2007universal,fw,walsh2012norm}. Further refinements have been given, such as large intersections for the indicator function of a set in a multicorrelation~\cite{BHK,Franpoly,FrantzikinakisKuca2025,MC3} (see also~\cite{ABB,shalom3, shalom2,ABS,Ackelsberg2024} for extensions to abelian groups beyond $\Z$).

For each of these types of results (recurrence, convergence, and large intersections), it is natural to ask if there are $\IP$ versions, and this is our focus.  
We start in Section~\ref{sec:conv} by giving necessary and sufficient conditions for mean convergence of an average along an $\IP$.  Using spectral criteria, in Theorem~\ref{criteria}, we show when such an average for a function converges to its projection onto the invariant $\sigma$-algebra.  This allows us to deduce sufficient conditions for convergence along an $\IP$ in Section~\ref{sec:conv-more} and an example of divergence along an $\IP$ in Section~\ref{sec:diverge}. 
In particular, we show that for $\IP$ sequences with rational spectrum (see Definition~\ref{spectrum}) we can describe the limit (Theorem~\ref{METrational}), and this is the class of $\IP$s for which we can describe more detailed behavior (Theorem~\ref{formula}).

In Section~\ref{sec:multiple}, we turn our attention to multiple ergodic averages along an $\IP$. Proving convergence for these multiple averages requires the development of $\IP$ versions of the tools used for linear averages, including a version of the van der Corput Lemma (Lemma~\ref{vdc}), a version of Gowers-Host-Kra seminorms (Definition~\ref{def:seminorms}), and $\IP$ cubic measures (Definition~\ref{def:cubic-measure}). 
This allows us to show (Theorem~\ref{HKcharacteristic}) that  for sequences with rational spectrum, the behavior of the multiple ergodic average along an $\IP$ is controlled by the same factors that control the well understood case of linear averages.  

We then use the tools developed to derive several applications.  In Theorem~\ref{formula}, we give a limit formula for certain multiple averages along $\IP$s, and this in turn is used to derive a large intersection property for progressions of length three and four along an $\IP$ with rational spectrum.  The ergodic version of these results are given in Theorems~\ref{three} and~\ref{four}, and via variant of Furstenberg's correspondence principle for ergodic systems (see~\cite[Proposition 3.1]{BHK}), we immediately deduce an application in the integers (see Definition~\ref{IP-density:def} for the notion of $\IP$ density). 
To state the result, recall that if $E\subseteq \mathbb{Z}$, we define the {\em upper Banach density} $d^*(E)$ by 
$$d^*(E)=\lim_{N\rightarrow\infty}\sup_{M\in\Z} \frac{|E\cap \{M+1,\dots,M+N\}|}{N}.$$
\begin{theorem}
\label{th:combinatorial}
Let $(n_j)_{j\in\mathbb{N}}$ be a sequence of natural numbers with rational spectrum and let  $E\subseteq\mathbb{Z}$ be a set with $d^*(E)>0$. Then for all coprime integers $\ell_1,\ell_2\in \mathbb{Z}$ and all $\varepsilon>0$, 
we have that both 
$$\dip\left\{n\in \IP\bigl((n_j)_{j\in\mathbb{N}}\bigr) : d^*\bigl(E\cap (E-\ell_1n)\cap (E-\ell_2n)\bigr) > d^*(E)^3 -\varepsilon\right\}$$
and
$$\dip\left\{n\in \IP\bigl((n_j)_{j\in\mathbb{N}}\bigr) : d^*\bigl(E\cap 
(E-\ell_1n) \cap (E-\ell_2n) \cap (E-(\ell_1+\ell_2)n)\bigr)>d^*(E)^4-\varepsilon\right\}$$ 
are positive for any increasing F\o lner sequence $\Phi=(\Phi_N)_{N\in\mathbb{N}}$.  
\end{theorem}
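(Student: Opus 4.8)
The plan is to deduce this combinatorial statement from its ergodic counterparts, Theorems~\ref{three} and~\ref{four}, by means of a variant of Furstenberg's correspondence principle (as in~\cite[Proposition 3.1]{BHK}). First I would apply the correspondence principle to the set $E$: since $d^*(E)>0$, it produces an ergodic system $\XX=(X,\mathcal{B},\mu,T)$ together with a set $A\in\mathcal{B}$ satisfying $\mu(A)=d^*(E)$ and, for every finite tuple $m_1,\dots,m_k\in\Z$,
$$d^*\bigl(E\cap (E-m_1)\cap\cdots\cap (E-m_k)\bigr)\ \geq\ \mu\bigl(A\cap T^{-m_1}A\cap\cdots\cap T^{-m_k}A\bigr).$$
The key point is that the produced system is ergodic, so that the ergodic large intersection theorems, which require ergodicity, may be applied to it.

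For the three-term case, I would apply Theorem~\ref{three} with $f_0=f_1=f_2=\one_A$ to the system $\XX$. Since $\ell_1,\ell_2$ are coprime and $(n_j)_{j\in\N}$ has rational spectrum, the theorem yields that the set
$$S_3=\bigl\{n\in\IP\bigl((n_j)_{j\in\N}\bigr) : \mu\bigl(A\cap T^{-\ell_1 n}A\cap T^{-\ell_2 n}A\bigr)>\mu(A)^3-\varepsilon\bigr\}$$
has positive lower $\IP$ density $\dip$ with respect to the given F\o lner sequence $\Phi$. Taking $m_1=\ell_1 n$ and $m_2=\ell_2 n$ in the correspondence inequality and recalling $\mu(A)=d^*(E)$, every $n\in S_3$ also satisfies $d^*\bigl(E\cap(E-\ell_1 n)\cap(E-\ell_2 n)\bigr)>d^*(E)^3-\varepsilon$. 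Hence $S_3$ is contained in the combinatorial set appearing in the first displayed line of the theorem, and since $\dip$ is monotone under inclusion of subsets of $\IP\bigl((n_j)_{j\in\N}\bigr)$, that combinatorial set has positive lower $\IP$ density as well. The four-term case is identical, applying Theorem~\ref{four} with $f_0=\cdots=f_3=\one_A$ and using the correspondence inequality with $m_1=\ell_1 n$, $m_2=\ell_2 n$, and $m_3=(\ell_1+\ell_2)n$.

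Most of the work has already been carried out in establishing Theorems~\ref{three} and~\ref{four}; the deduction above is essentially formal. The only points requiring care are that the correspondence principle yields an \emph{ergodic} system (so that those theorems apply) and that the inequality it provides points in the direction that lets a lower bound on the ergodic correlation transfer to a lower bound on the combinatorial density; the transfer of positivity of $\dip$ is then immediate from its monotonicity under inclusion. The substantive obstacle---controlling the multiple averages along the $\IP$ by nilsystems and extracting the sharp thresholds $d^*(E)^3$ and $d^*(E)^4$---lies entirely in the ergodic results and the rational spectrum hypothesis, not in this final step.
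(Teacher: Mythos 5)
Your proposal is correct and is exactly the paper's route: the paper deduces Theorem~\ref{th:combinatorial} from the ergodic statements, Theorems~\ref{three} and~\ref{four}, via the ergodic variant of Furstenberg's correspondence principle (\cite[Proposition 3.1]{BHK}), which is precisely the transfer you describe. The paper leaves this deduction implicit ("we immediately deduce"), and your write-up just supplies the same formal details, including the two points that genuinely matter (ergodicity of the correspondence system and the direction of the correspondence inequality).
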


As a sample application (see Exmaple~\ref{example:10}), this applies to the $\IP$ generated by $1,10,100,\dots$, which generates the set of all numbers in base $10$ that can be written only using the digits $0$ and $1$.

\subsection{Open questions}
It is natural to ask if the sets of large intersections in Theorem~\ref{th:combinatorial} are large with respect to other measurements of largeness. One phrasing of this question is the following syndetic version of this question.
\begin{question}
    Let $(n_j)_{j\in\mathbb{N}}$ be a sequence of natural numbers with rational spectrum, let $E\subseteq \mathbb{Z}$ be a set with positive upper Banach density $d^*(E)>0$, and let $\varepsilon>0$. Do finitely many translates of the set
    $$\{n\in \IP\big((n_j)_{j\in\mathbb{N}}\big) : d^*\big(E\cap (E-n)\cap (E-2n)\big)>d^*(E)^3-\varepsilon\}$$ 
    cover $\IP\big((n_j)_{j\in\mathbb{N}}\big)?$ 
    Does the analogous statement hold for four term arithmetic progressions? 
\end{question}

From the example in Theorem~\ref{false}, we know that there exist $\IP$s for which the set of large intersections is trivial, and this leads to the next question.
\begin{question}
    Are there $\IP$s other than those with rational spectrum for which the set of large intersections in Theorem~\ref{th:combinatorial} for three (or four) sets is nonempty? 
\end{question} 

A related question on these intersections is if there is some other bound for which the large intersection property holds for all $\IP$s.  It follows from Furstenberg and Katznelson~\cite{FK85} that there exists a constant $C>0$ such that 
    \begin{equation}
        \label{eq:lowerC}
\{n\in \IP\big((n_j)_{j\in\mathbb{N}}\big) : d^*\big(E\cap (E-n)\cap (E-2n)\big) > C\}
 \end{equation}
 is nonempty, and it further follows from~\cite{bergelson2000aspects} that $C=C(d^*(E))$ is a constant depending only on $d^*(E)$.  This leads to the next question. 
\begin{question}
    Let $E\subseteq\mathbb{N}$ with $d^*(E)=\delta$ for some $\delta>0$.  What is the optimal value of $C = C(\delta)$ in~\eqref{eq:lowerC}? 
\end{question}
We note that we phrase this as the optimal value, as in Theorem~\ref{false} we show  that $C(\delta)\leq \delta^{-c\log \delta}$ for some absolute constant $c>0$.

A last question that we pose is on $\IP$ pointwise convergence. 
\begin{question}
    Let $\XX=(X,\mathcal{B},\mu,T)$ be an ergodic invertible measure preserving system,  let $(n_j)_{j\in\mathbb{N}}$ be a sequence with rational spectrum, and let $f\in L^\infty(\mu)$. Does the limit     $$\lim_{N\rightarrow\infty} \EIP T^n f(x)$$ exist for $\mu$-almost every  $x\in X$?
\end{question}
While such a result is of interest on its own,  it would also simplify the computations of the limit of an average along an $\IP$ in nilsystems, leading to simplifications of the proofs in Section~\ref{limit:sec}.

\subsection*{Acknowledgements}
The first author was partially supported by the National Science Foundation grant DMS-2348315. The second author was partially supported by the National Science Foundation grant DMS-1926686. We thank Ethan Ackelsberg for useful discussions during the preparation of this article and for pointing out useful references.

\section{Background and adaptations for $\IP$ sequences}
\label{sec:background}
\subsection{Measure preserving systems}
An \emph{invertible measure preserving system} $\XX=(X,\mathcal{B},\mu,T)$ is a quadruple in which  $(X,\mathcal{B},\mu)$ is a probability space and $T\colon X\rightarrow X$ is an invertible, measurable, measure preserving transformation (meaning that $\mu(A)=\mu(T^{-1}(A))$ for all measurable $A\subseteq X$). When the context is clear, we shorten this and refer to an invertible measure preserving system as a \emph{system}. 
Throughout, we assume that all systems $\XX=(X,\CB,\mu,T)$ are regular, meaning that $X$ is a compact metric space, $\CB$ is the Borel-$\sigma$ algebra, and $\mu$ is a regular measure.

Given a system $\XX=(X,\mathcal{B},\mu,T)$,  let $L^2(\mu)$ denote the Hilbert space of all complex valued square integrable functions on $X$ modulo $\mu$-almost everywhere equivalence. We make the usual abuse of notation, using $T$ both for the transformation on $X$ and for the the associated  unitary operator $T\colon L^2(\mu)\rightarrow L^2(\mu)$ defined by $f\mapsto f\circ T$ for $f\in L^2(\mu)$. The measure preserving system is \emph{ergodic} if the only $T$-invariant functions are constant $\mu$-almost everywhere.

Let $\CS^1$ denote the one dimensional circle.  For $t\in \CS^1$, the \emph{$t$-eigenspace} is the closed Hilbert space consisting of all $f\in L^2(\mu)$ satisfying $Tf=t\cdot f$, and any nonzero function $f\in L^2(\mu)$ in the $t$-eigenspace is an \emph{eigenfunction with eigenvalue $t$}. For any $t\in \CS^1$, we let $P_t\colon L^2(\mu)\rightarrow L^2(\mu)$ denote orthogonal projection onto the $t$-eigenspace.
\subsection{Factors and cocycles}
If $\XX=(X,\mathcal{B},\mu,T)$ and $\YY=(Y,\mathcal{D},\nu,S)$ are  systems, 
we say that $\YY$ is a \emph{factor} of $\XX$ (or equivalently $\XX$ is  an \emph{extension} of $\YY$)  if there exists a measurable map $\pi\colon X\to Y$ such that $\pi\mu = \nu$ and $S\circ\pi = \pi\circ T$ holds $\mu$-almost everywhere. When the map $\pi$ is also invertible with a measurable inverse, we say that the systems $\XX$ and $\YY$ are \emph{isomorphic}.

If $\YY=(Y,\CY,\nu,S)$ is a factor of the system  $\XX=(X,\mathcal{B},\mu,T)$ with factor map $\pi\colon X\to Y$ and $f\in L^2(\mu)$, let $\E(f\mid\YY)$ denote the {\em  conditional expectation} of $f$ onto $Y$, defined as the unique function in $L^2(\nu)$
such that $\int_{\pi^{-1}(A)}f\d\mu = \int_A \E(f\mid\YY)\d\nu$ for all measurable set $A\subset\CY$. To avoid confusion, we use $\E(f\mid \Y)$ to denote the lift of $\E(f\mid \YY) $ to $L^2(\mu)$.

Let $\XX=(X,\mathcal{B}_X,\mu_X,T)$ be a system and $K=(K,\mathcal{B}_K,\mu_K)$ be a compact abelian group equipped with the Borel $\sigma$-algebra and Haar measure. A \emph{cocycle} on $\XX$ with values in $K$ is a measurable map $\rho\colon X\rightarrow K$. The \emph{extension defined by $\rho$} is the system $\XX\times_\rho K = (X\times K,\mathcal{B}_X\times \mathcal{B}_K,\mu_X\times\mu_K,T_\rho)$, where 
$$T_\rho (x,u) = (Tx,\rho(x)+u).$$ A cocycle $\rho\colon X\rightarrow K$ gives rise to an extended map $\rho\colon \mathbb{Z}\times X\rightarrow K$ defined by ($\Id_K$ denotes the identity in $K$)
$$\rho(n,x)=\begin{cases} \sum_{i=0}^{n-1}\rho(T^ix) & n\geq 1\\
\Id_K & n=0\\
-\sum_{i=n+1}^{0}\rho(T^{i}x)& n\leq -1
\end{cases}$$
and 
$$T_\rho^n (x,u) = (T^nx,\rho(n,x)+u).$$

Two cocycles $\rho,\rho'\colon X\rightarrow K$ are  \emph{cohomologous} if there exists a measurable map $F\colon X\rightarrow K$ such that $\rho(x)=\rho'(x)+F(Tx)-F(x).$ We note that if $\rho,\rho'$ are cohomologous then the systems $\XX\times_\rho K$ and $\XX\times_{\rho'} K$ are isomorphic and the isomorphism is given by $(x,t)\mapsto (x,t-F(x))$.

\begin{definition}[Continuous vertical characters]
    Let $\XX=(X,\mathcal{B}_X,\mu_X,T)$ be a system, let $K$ be a compact abelian group, and let $\rho\colon X\rightarrow K$ be a cocycle. A function $f\colon X\times_\rho K\rightarrow \C$ is a \emph{continuous vertical character} if there exists a character $\chi\colon K\rightarrow \CS^1$ and a continuous function $g\colon X\rightarrow \mathbb{C}$ such  that $f(x,k) = g(x)\cdot\chi(k).$
\end{definition}

We note for use in the sequel that it follows from  the Stone-Weierstrass theorem that the space spanned by the continuous vertical characters is dense in the continuous functions $C(X\times_\rho K)$.
\begin{lemma}\label{dense}
     Let $\XX=(X,\mathcal{B}_X,\mu_X,T)$ be a system, let $K$ be a compact abelian group and $\rho\colon X\rightarrow K$ a cocycle. The space spanned by the continuous vertical characters on $X\times_\rho K$ is dense in $C(X\times_\rho K)$.
\end{lemma}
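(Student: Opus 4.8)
The plan is to apply the complex Stone--Weierstrass theorem to the linear span $A$ of the continuous vertical characters inside $C(X\times_\rho K)$. The first observation is that, as a topological space, $X\times_\rho K$ is simply the product $X\times K$ with the product topology: the cocycle $\rho$ enters only through the transformation $T_\rho$ and plays no role in the underlying topology, so $C(X\times_\rho K)=C(X\times K)$. Since $X$ is a compact metric space and $K$ is compact, the product $X\times K$ is compact Hausdorff, which is the setting in which Stone--Weierstrass applies.

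Next I would verify that $A$ is a unital, conjugation-closed subalgebra. Closure under products is immediate from
$$\bigl(g_1(x)\chi_1(k)\bigr)\bigl(g_2(x)\chi_2(k)\bigr)=(g_1g_2)(x)\,(\chi_1\chi_2)(k),$$
since $g_1g_2\in C(X)$ and $\chi_1\chi_2$ is again a character of $K$; this shows the span of vertical characters is already closed under multiplication. The constant function $1$ arises by taking $g\equiv 1$ and $\chi$ the trivial character, so $A$ contains the constants. Closure under complex conjugation follows from $\overline{g(x)\chi(k)}=\overline{g}(x)\,\overline{\chi}(k)$, using that $\overline g\in C(X)$ and that $\overline\chi=\chi^{-1}$ is again a character of $K$.

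The only step requiring a genuine (if standard) input is point separation, and I would split it into two cases for distinct points $(x_1,k_1)\neq(x_2,k_2)$ of $X\times K$. If $x_1\neq x_2$, then since $X$ is compact Hausdorff there is $g\in C(X)$ with $g(x_1)\neq g(x_2)$, and $f(x,k)=g(x)$ (with $\chi$ trivial) separates the two points. If $x_1=x_2$ but $k_1\neq k_2$, then because $K$ is a compact abelian group its characters separate points, so there is a character $\chi$ with $\chi(k_1)\neq\chi(k_2)$, and $f(x,k)=\chi(k)$ does the job.

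Having verified that $A$ separates points, contains the constants, and is a conjugation-closed subalgebra, the Stone--Weierstrass theorem yields that $A$ is dense in $C(X\times_\rho K)$, as claimed. The main (and essentially only) conceptual point is the separation of points of $K$ by its characters, which is where compactness and abelianness of $K$ are used; everything else is routine algebra together with the observation that the cocycle does not affect the topology.
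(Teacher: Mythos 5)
Your proposal is correct and follows exactly the route the paper intends: the paper states the lemma as a direct consequence of the Stone--Weierstrass theorem, and your argument simply fills in the standard verification (the span of vertical characters is a unital, conjugation-closed subalgebra separating points, using that characters of the compact abelian group $K$ separate points). No gaps; the observation that the cocycle plays no role in the topology of $X\times_\rho K$ is the right remark to make and is implicit in the paper.
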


\subsection{Definition of an $\IP$}
\begin{definition}
If $(n_j)_{j\in\mathbb{N}}$ is a sequence of natural numbers,  define 
$\IP\bigl((n_j)_{j\in\mathbb{N}}\bigr)$ to be the multiset  containing all finite sums of the sequence $(n_j)_{j\in\mathbb{N}}$, meaning   
\[
\IP\bigl((n_j)_{j\in\mathbb{N}}\bigr) = \{n_{i_1}+\dots+n_{i_\ell} : i_1,\dots,i_\ell \text{ are distinct} \}, 
\]
with the convention that the empty sum $0$ is included.  We refer to the numbers  $(n_j)_{j\in\N}$ as  the {\em generators}, and when the generators are clear from the context we refer to this simply as an {\em $\IP$}.  
\end{definition}

Our definition differs slightly from some uses in the literature, where there is no assumption that $0$ lies in the $\IP$
and an $\IP$ is defined as a set rather,  than a multiset. 
The assumption that an $\IP$ contains $0$ is merely for notational convenience, but the use of a multiset is necessary for our analysis. We could avoid this by taking the generators to be a  sequence $(n_j)_{j\in\mathbb{N}}$ that grows sufficiently fast such that $n_j>\sum_{i=1}^{j-1}n_i$ for all $j\in\N$, but we prefer to work in the more general setting.  
We note that in the literature, a set is sometimes called an $\IP$-set if it contains an $\IP$.  However, as we also prove convergence results, we can not adopt that convention. 

\begin{example}
\label{example:10}
 Set $n_j = 10^{j-1}$ for all $j\geq 1$. Then $\IP\bigl((n_j)_{j\in\mathbb{N}}\bigr)$ is the set of all natural numbers that can be written in base $10$ only using the digits $0$ and $1$, meaning that $$\IP\bigl((n_j)_{j\in\mathbb{N}}\bigr) = \{0,1,10,11,100,101,110,111,1000,\dots\}.$$
 Note that this $\IP$ is a set, not a multiset, as all finite sums are distinct.  
  If instead we take the generators  $n_j=2^{j-1}$ for $j\geq 1$, then  $\IP\bigl((n_j)_{j\in\mathbb{N}}\bigr)=\mathbb{N}$.
    \end{example}
    
   It is easy to see that there are many subsets of integers that are not $\IP$s.  For example, it is easy to check that an $\IP$ contains infinitely many multiples of a given positive integer.

\subsection{Averages along an $\IP$}

A {\em F\o lner sequence $\Phi=(\Phi_N)_{N\in\N}$ in  $\N$} is a sequence of finite subsets of natural numbers satisfying 
\begin{equation}\label{Folner}
\lim_{N\rightarrow\infty} \frac{|(a+\Phi_N )\Delta \Phi_N|}{|\Phi_N|} =0
\end{equation}
for all $a\in\mathbb{N}$, where $\Delta$ denotes the symmetric difference. A standard example of a F\o lner sequence is taking intervals, such as $\Phi_N = [1,N]$ or  $\Phi_N = [N^2,N^2+N]$. We say that a F\o lner sequence $\Phi_N$ is \emph{increasing} if $\Phi_N=[M,a_N]$ for some fixed number $M$ and a sequence $a_N$ tending to infinity as $N\rightarrow\infty$.

To define averages along an $\IP$, we adapt  F\o lner sequences to our context.  
\begin{definition}
    Let $(n_j)_{j\in\mathbb{N}}$ be a sequence of natural numbers and $\Phi=(\Phi_N)_{N\in\mathbb{N}}$ be a F\o lner sequence in $\N$. The \emph{$\IP$-F\o lner sequence associated to $(\Phi_N)_{N\in\mathbb{N}}$} is the sequence    $$\IP_{\Phi_N}\bigl((n_j)_{j\in\mathbb{N}}\bigr) = \{n_{i_1}+\dots+n_{i_\ell} : i_1,\dots,i_\ell\in \Phi_N \text{ are distinct} \}.$$
    If the F\o lner sequence $(\Phi_N)_{N\in\mathbb{N}}$ is increasing, we say that the  $\IP$-F\o lner sequence $\IP_{\Phi_N}\bigl((n_j)_{j\in\mathbb{N}}\bigr)$ is \emph{increasing}.
    When the generating sequence $(n_j)_{j\in\N}$ is clear from the context, we omit it from the notation. 
\end{definition}
Given a F\o lner sequence $\Phi=(\Phi_N)_{N\in\mathbb{N}}$ in $\N$ and sequence $(a_n)_{n\in\N}$, we use the shorthand notation 
\[\EIP a_n  = \frac{1}{|\IP_{\Phi_N}\bigl((n_j)_{j\in\mathbb{N}}\bigr)|}\sum_{n\in\IP_{\Phi_N}\bigl((n_j)_{j\in\mathbb{N}}\bigr)}a_n, 
\]
and when the generators are clear from the context, we write $\sEIP a_n$.

This allows us to  define ergodic averages along $\IP$s.

\begin{definition}[The ergodic average]
Let $\XX=(X,\mathcal{B},\mu,T)$ be an invertible measure preserving system, let $(n_j)_{j\in\mathbb{N}}$ be a sequence of natural numbers,   let $(\Phi_N)_{N\in\mathbb{N}}$ be a F\o lner sequence, and let $f\in L^2(\mu)$.  The {\em ergodic average of  $f$ along the $\IP$-F\o lner sequence $\IP_{\Phi_N}\bigl((n_j)_{j\in\mathbb{N}}\bigr)$} is defined to be the limit 
\begin{equation}
\label{eq:ergodic-average}
\lim_{N\rightarrow\infty}\E_{n\in \IP_{\Phi_N}\big((n_j)_{j\in\mathbb{N}}\big)} T^n f
\end{equation}
taken in $L^2(\mu)$.
\end{definition}

Considering the ergodic average along $\IP_{[1,N]}\big((2^{j-1})_{j\in\mathbb{N}}\big)$, von Neumann's mean ergodic theorem states that this average exists for all $f\in L^2(\mu)$ and converges to the projection onto the $T$-invariant functions. In contrast, we give examples of $\IP$-F\o lner sequences where the ergodic averages do not converge, and examples  where the averages converge but the limit is not  projection onto the $T$-invariant functions.  

\section{Ergodic averages along an $\IP$-sequence}
\label{sec:conv}

\subsection{Convergence along an $\IP$-F\o lner sequence}

In this section, we give necessary and sufficient conditions in Theorem~\ref{criteria} for mean convergence of ergodic averages along an $\IP$-F\o lner sequence, with criteria for when the limit is the projection onto the $T$-invariant functions.   We start by recalling some standard definitions and setting notation. 

Let $\delta_1$ denote the \emph{Dirac function} on the  circle $\CS^1$, meaning 
\begin{equation}\label{delta}\delta_1(\alpha) = \begin{cases} 1 & \alpha = 1\\ 0 & \alpha \neq 1,\end{cases}
\end{equation}
 Our first condition for convergence is a consequence of the spectral theorem for unitary operators. 

\begin{theorem}
\label{spectralcriteria}
       Let $(n_j)_{j\in\mathbb{N}}$ be a sequence of natural numbers and let $\Phi=(\Phi_N)_{N\in\N}$ be a F\o lner sequence. The following are equivalent: 
       \begin{enumerate}
       \item 
       \label{eq:oneone}
       For all invertible measure preserving systems $\XX=(X,\mathcal{B},\mu,T)$ and all $f\in L^2(\mu)$, the ergodic averages~\eqref{eq:ergodic-average} along  $\IP_{\Phi_N}\bigl((n_j)_{j\in \mathbb{N}}\bigr)$ exists,
       \item 
       \label{eq:twotwo}
       The limit 
       \begin{equation}
\label{spectralcriteriaeq}
\lim_{N\rightarrow\infty}\E_{n\in \IP_{\Phi_N}\big((n_j)_{j\in \mathbb{N}}\big)} \alpha^n   
    \end{equation}
    exists for all  $\alpha\in \CS^1$.
       \end{enumerate} Furthermore, the following are equivalent: 
        \begin{enumerate}[resume]
            \item 
            \label{eq:threethree}
              For all invertible measure preserving systems $\XX=(X,\mathcal{B},\mu,T)$ and all $f\in L^2(\mu)$, the ergodic averages~\eqref{eq:ergodic-average} along  $\IP_{\Phi_N}\bigl((n_j)_{j\in \mathbb{N}}\bigr)$  converges to the projection onto the $T$-invariant functions.
            \item 
            \label{eq:fourfour}
            The limit in~\eqref{spectralcriteriaeq} is equal to $\delta_1(\alpha)$.
        \end{enumerate}  
\end{theorem}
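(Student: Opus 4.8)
The plan is to use the spectral theorem for the unitary operator $T$ to reduce every assertion about the operator averages to the behaviour of the scalar averages
$$c_N(\alpha):=\sEIP \alpha^n, \qquad \alpha\in\CS^1.$$
Let $E$ denote the projection-valued spectral measure of $T$ on $\CS^1$, so that $T=\int_{\CS^1}\alpha\d E(\alpha)$ and, for each $f\in L^2(\mu)$, the associated spectral measure $\mu_f(A)=\langle E(A)f,f\rangle$ is a finite positive Borel measure on $\CS^1$ of total mass $\|f\|^2$. Then
$$\sEIP T^n f=\int_{\CS^1}c_N(\alpha)\d E(\alpha)\,f,$$
so convergence in $L^2(\mu)$ of the averages is, by the isometry property of $E$, equivalent to convergence in $L^2(\mu_f)$ of the functions $c_N$. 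The one elementary but decisive point is the uniform bound $|c_N(\alpha)|\le 1$, valid for every $N$ and every $\alpha\in\CS^1$ since $c_N$ is an average of unit-modulus numbers.

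To prove $(2)\Rightarrow(1)$, suppose $c_N(\alpha)$ converges pointwise to a limit $c(\alpha)$; as each $c_N$ is a Laurent polynomial in $\alpha$, hence continuous, the limit $c$ is measurable, and the uniform bound lets me invoke dominated convergence against the finite measure $\mu_f$ to get $\int_{\CS^1}|c_N-c|^2\d\mu_f\to 0$. Thus the averages converge to $\int_{\CS^1}c(\alpha)\d E(\alpha)\,f$, proving $(1)$. For $(1)\Rightarrow(2)$, I test against a circle rotation: fix $\alpha\in\CS^1$, take $\YY=(\CS^1,\mathcal B,m,S)$ with $m$ Haar measure and $Sy=\alpha y$, and let $g(y)=y$. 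Then $Sg=\alpha g$ and $\|g\|=1$, so $\sEIP S^n g=c_N(\alpha)\,g$; assumption $(1)$ forces this to converge in $L^2(m)$, and since $g\neq 0$ the scalars $c_N(\alpha)$ converge, which is $(2)$.

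For the second equivalence, observe that $\int_{\CS^1}\delta_1(\alpha)\d E(\alpha)=E(\{1\})=P_1$, the projection onto the $T$-invariant functions. Hence if $(4)$ holds, the computation above with $c=\delta_1$ shows that every average converges to $P_1 f$, giving $(3)$. Conversely, for $(3)\Rightarrow(4)$ I reuse the rotation system $\YY$ with eigenfunction $g$: its spectral measure is the point mass at $\alpha$, so $P_1 g=\delta_1(\alpha)\,g$ by orthogonality of distinct eigenspaces; since $(3)$ forces $c_N(\alpha)\,g=\sEIP S^n g\to P_1 g=\delta_1(\alpha)\,g$ and $g\neq 0$, we conclude $\lim_N c_N(\alpha)=\delta_1(\alpha)$, which is $(4)$.

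I do not expect a genuine obstacle here: the content is entirely carried by the spectral theorem together with the uniform bound $|c_N|\le 1$. The only steps deserving care are the measurability of the pointwise limit $c$ and the passage from pointwise convergence of the $c_N$ to convergence in $L^2(\mu_f)$ for every $f$ simultaneously (both handled by dominated convergence via the uniform bound), and the verification that the circle-rotation test systems realize each prescribed $\alpha\in\CS^1$ as an eigenvalue with a nonzero, norm-one eigenfunction.
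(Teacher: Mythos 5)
Your proof is correct and follows essentially the same route as the paper: circle rotations serve as test systems for the directions $(1)\Rightarrow(2)$ and $(3)\Rightarrow(4)$, while the spectral theorem plus dominated convergence (using the uniform bound $|c_N|\le 1$) handles $(2)\Rightarrow(1)$ and $(4)\Rightarrow(3)$. The only cosmetic difference is that you identify $E(\{1\})$ with the projection onto invariant functions as a standard spectral fact and argue via orthogonality of eigenspaces, whereas the paper verifies $T\rho(\{1\})=\rho(\{1\})=\rho(\{1\})T$ by a short direct computation.
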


\begin{proof}
We start by proving the equivalence of~\eqref{eq:oneone} and~\eqref{eq:twotwo}. First suppose that for every invertible measure preserving system $\XX=(X,\mathcal{B},\mu,T)$ and all $f\in L^2(\mu)$, the ergodic averages along $\IP_{\Phi_N}\bigl((n_j)_{j\in \mathbb{N}}\bigr)$ exist. 
    Given $\alpha\in \CS^1$,  take $\XX$ to be the ergodic rotation on $\CS^1$ by $\alpha$ and let $f\colon X\rightarrow \CS^1$ be the natural embedding. Then,
   \begin{equation}\label{averagef}\sEIP T^n f =\left(\sEIP \alpha^n\right) \cdot f
   \end{equation} and it follows immediately that the limit~\eqref{spectralcriteriaeq} exists. Conversely, suppose that ~\eqref{spectralcriteriaeq} exists for all $\alpha\in \CS^1$ and let $d\rho$ denote the projection valued measure for $T$, meaning that 
    $$T^n = \int_{\CS^1} t^n \d\rho(t)$$ for all $n\in \N$. Then by the Lebesgue dominated convergence theorem, 
\begin{equation}
    \label{eq:phi}
    \lim_{N\rightarrow\infty}\sEIP T^n= \int_{\CS^1} \lim_{N\rightarrow\infty}\sEIP t^n \d\rho(t) = \int_{\CS^1} \phi(t)\d\rho(t),
    \end{equation}
 where $\phi(t) = \lim_{N\rightarrow\infty}\sEIP  t^n$ is the pointwise limit which exists by assumption.
 In particular, we see that the ergodic average converges for all $f\in L^2(\mu)$.

 To check the equivalence of~\eqref{eq:threethree} and~\eqref{eq:fourfour}, note that iff $\alpha=1$, it is immediate that the limit~\eqref{spectralcriteriaeq} equals $1$. Supposing that~\eqref{eq:threethree} holds, assume that $\alpha\not =1$ and again take $\XX$ to be the ergodic rotation by $\alpha$ and $f\colon X\to\CS^1$ to be the natural embedding.  Since $\int_X f\d\mu =0$, it follows from~\eqref{averagef} that the limit ~\eqref{spectralcriteriaeq} is $0$. Conversely, suppose that~\eqref{eq:fourfour} holds, meaning  that the map $\phi$ in~\eqref{eq:phi} satisfies $\phi(t) = \delta_1(t)$. Using the same argument as in the first part, the ergodic average converges to the projection $\rho(\{1\})$. Observe that
 $$T\rho(\{1\}) = \int_{S^1} t\cdot \delta_1(t) ~d\rho(t) = \int_{S^1} \delta_1(t)~d\rho(t)$$
 and similarly $\rho(\{1\})T=\rho(\{1\})$. It follows that $\rho(\{1\})$ is the projection onto the subspace of $T$-invariant functions.
\end{proof}

To refine this characterization of convergence, we use the next result to rewrite averages as infinite products. This use also justifies our conventions on the definition of an $\IP$-set,  defining it as a multiset and always including $0$ in the set. 
\begin{proposition}\label{avgtoprod}
    Let $(n_j)_{j\in\mathbb{N}}$ be a sequence of natural numbers, let $\Phi=(\Phi_N)_{N\in\mathbb{N}}$ be a F\o lner sequence in $\N$, let $\alpha\in \CS^1$, and set  $z_j = \frac{1+\alpha^{n_j}}{2}$ for  $j\in\N$.  
    For every $N\in\mathbb{N}$, we have 
    $$\E_{n\in \IP_{\Phi_N}\bigl((n_j)_{j\in\mathbb{N}}\bigr)} \alpha^n = \prod_{j\in \Phi_N} z_j.$$
\end{proposition}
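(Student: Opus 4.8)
The plan is to recognize that summing $\alpha^n$ over the multiset $\IP_{\Phi_N}\bigl((n_j)_{j\in\mathbb{N}}\bigr)$ is nothing other than the expansion of a product of binomials, once the multiset structure is correctly accounted for. The crucial observation—and precisely the reason that the conventions of including $0$ and working with a multiset (rather than a set) are needed—is that $\IP_{\Phi_N}\bigl((n_j)_{j\in\mathbb{N}}\bigr)$ is naturally indexed by the subsets of $\Phi_N$: each subset $S\subseteq \Phi_N$ contributes the element $\sum_{j\in S} n_j$, the empty subset contributing $0$, and distinct subsets contribute distinct elements of the multiset even when their integer sums happen to coincide.

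First I would record that, as a multiset, $\IP_{\Phi_N}\bigl((n_j)_{j\in\mathbb{N}}\bigr)$ has exactly $2^{|\Phi_N|}$ elements, one for each subset $S\subseteq \Phi_N$, so that $\bigl|\IP_{\Phi_N}\bigl((n_j)_{j\in\mathbb{N}}\bigr)\bigr| = 2^{|\Phi_N|}$. Using this bijective identification, I would then rewrite the sum over the multiset as a sum over subsets,
\[
\sum_{n\in \IP_{\Phi_N}\bigl((n_j)_{j\in\mathbb{N}}\bigr)} \alpha^n \;=\; \sum_{S\subseteq \Phi_N} \alpha^{\sum_{j\in S} n_j} \;=\; \sum_{S\subseteq \Phi_N} \prod_{j\in S}\alpha^{n_j}.
\]

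Next I would invoke the standard expansion of a product of binomials: for any finite collection $(a_j)_{j\in \Phi_N}$ of complex numbers one has $\prod_{j\in \Phi_N}(1+a_j) = \sum_{S\subseteq \Phi_N}\prod_{j\in S}a_j$, which is proved by a trivial induction on $|\Phi_N|$ separating subsets according to whether they contain a fixed index. Taking $a_j = \alpha^{n_j}$ and combining with the previous display gives $\sum_{n\in \IP_{\Phi_N}} \alpha^n = \prod_{j\in\Phi_N}(1+\alpha^{n_j})$. Dividing by the cardinality $2^{|\Phi_N|}$ then yields
\[
\E_{n\in \IP_{\Phi_N}\bigl((n_j)_{j\in\mathbb{N}}\bigr)} \alpha^n \;=\; \frac{1}{2^{|\Phi_N|}}\prod_{j\in\Phi_N}\bigl(1+\alpha^{n_j}\bigr) \;=\; \prod_{j\in\Phi_N}\frac{1+\alpha^{n_j}}{2} \;=\; \prod_{j\in\Phi_N} z_j,
\]
as claimed.

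I expect no substantive obstacle here: the entire content is the bijective identification of the multiset with the power set of $\Phi_N$. The single point that genuinely requires care, and that I would state explicitly, is exactly this multiset bookkeeping: were $\IP_{\Phi_N}$ treated as a genuine set, coincidences among the sums $\sum_{j\in S} n_j$ would make both the cardinality differ from $2^{|\Phi_N|}$ and the sum differ from $\prod_{j}(1+\alpha^{n_j})$, and the clean product formula would fail. This is what Proposition~\ref{avgtoprod} is implicitly relying on, and it is why the multiset convention was adopted.
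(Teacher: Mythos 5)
Your proof is correct and takes essentially the same approach as the paper's: the paper also expands $\prod_{j\in\Phi_N}(1+\alpha^{n_j})$ as a sum over subsets of $\Phi_N$ and divides by $2^{|\Phi_N|}$, condensing into the phrase ``by expanding the terms'' the multiset bookkeeping that you spell out explicitly. Your added emphasis on why the multiset convention and the inclusion of $0$ are essential matches the remark the paper makes just before stating the proposition.
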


\begin{proof}
    We have
    $$\prod_{j\in \Phi_N} z_j = \prod_{j\in\Phi_N} \left(\frac{1+\alpha^{n_j}}{2}\right) = \frac{\prod_{j\in \Phi_N}(1+\alpha^{n_j})}{2^{|\Phi_N|}}$$
    and by expanding the terms, we obtain
     \[\prod_{j\in \Phi_N} z_j = \frac{\sum_{n\in\IP_{\Phi_N}\bigl((n_j)_{j\in\mathbb{N}}\bigr)}\alpha^n}{2^{|\Phi_N|}} = \sEIP \alpha^n. \quad\qedhere
  \]
\end{proof}

Let  $\arg\colon \CS^1\rightarrow \mathbb{R}/2\pi \mathbb{Z}$ denote the inverse of the isomorphism $t\mapsto e^{it}$. We have the following criteria for convergence.
\begin{theorem}[Criteria for convergence]\label{criteria}
Let $(n_j)_{j\in\mathbb{N}}$ be a sequence of natural numbers and let $\Phi=(\Phi_N)_{N\in\mathbb{N}}$ be a F\o lner sequence in $\N$.
\begin{enumerate}
  \item 
  \label{item:one} 
  For all invertible measure preserving system $\XX=(X,\mathcal{B},\mu,T)$, the  ergodic average~\eqref{eq:ergodic-average} along $\IP_{\Phi_N}\bigl((n_j)_{j\in\mathbb{N}}\bigr)$ converges to the projection onto the $T$-invariant functions if and only if 
  $$\lim_{N\rightarrow\infty}\prod_{j\in \Phi_N} \frac{(1+\cos(\arg(\alpha^{n_j}))}{2} = \delta_1(\alpha) 
  $$
 for all  $\alpha\in \CS^1$. 

  \item 
  \label{item:two}
   For all invertible measure preserving system $\XX=(X,\mathcal{B},\mu,T)$, the  ergodic average~\eqref{eq:ergodic-average} along $\IP_{\Phi_N}\bigl((n_j)_{j\in\mathbb{N}}\bigr)$ converges
  if and only if each $\alpha\in \CS^1$ satisfies either $$\lim_{N\rightarrow\infty}\prod_{j\in \Phi_N} \frac{(1+\cos(\arg(\alpha^{n_j}))}{2}=0$$ 
  or $$\lim_{N\rightarrow\infty} \sum_{j\in \Phi_N} \arctan\left(\frac{\sin(\arg(\alpha^{n_j}))}{1+\cos(\arg(\alpha^{n_j}))}\right)$$ exists.
\end{enumerate}
\end{theorem}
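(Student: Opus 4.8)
The plan is to reduce everything to the infinite product furnished by Proposition~\ref{avgtoprod} and then analyze its convergence through a polar decomposition. By the equivalence of conditions~\ref{eq:oneone} and~\ref{eq:twotwo} (resp.\ \ref{eq:threethree} and~\ref{eq:fourfour}) in Theorem~\ref{spectralcriteria}, convergence of the ergodic average for all systems (resp.\ convergence to the invariant projection) is equivalent to the existence of $\lim_{N\to\infty}\sEIP\alpha^n$ for every $\alpha\in\CS^1$ (resp.\ to this limit equalling $\delta_1(\alpha)$). By Proposition~\ref{avgtoprod}, $\sEIP\alpha^n=\prod_{j\in\Phi_N}z_j$ with $z_j=\tfrac{1+\alpha^{n_j}}{2}$. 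Writing $\theta_j=\arg(\alpha^{n_j})$ with representative in $(-\pi,\pi]$ and using the half-angle identity $1+e^{i\theta}=2\cos(\theta/2)e^{i\theta/2}$, I record two identities: $|z_j|^2=\cos^2(\theta_j/2)=\tfrac{1+\cos\theta_j}{2}$, and, when $\theta_j\ne\pi$, $\arg(z_j)=\theta_j/2=\arctan\!\big(\tfrac{\sin\theta_j}{1+\cos\theta_j}\big)$, the latter because $z_j$ then lies in the open right half-plane. Thus the product appearing in the alternatives is exactly $\big|\prod_{j\in\Phi_N}z_j\big|^2$, and the arctangent sum is exactly the principal argument $\arg\prod_{j\in\Phi_N}z_j$ accumulated over $j\in\Phi_N$.

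Part~\ref{item:one} is then immediate. The condition $\lim_N\sEIP\alpha^n=\delta_1(\alpha)$ is automatic at $\alpha=1$ (every factor equals $1$), so only $\alpha\ne1$ matters, where it asserts $\lim_N\prod z_j=0$. Since a sequence of complex numbers tends to $0$ iff its modulus does, this holds iff $\lim_N\prod_{j\in\Phi_N}\tfrac{1+\cos\theta_j}{2}=\lim_N|\prod z_j|^2=0$, which is precisely the displayed condition (again trivially true at $\alpha=1$).

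For Part~\ref{item:two} I would study existence of $\lim_N\prod_{j\in\Phi_N}z_j$ by taking a principal logarithm, legitimate since each nonzero $z_j$ has positive real part: $\log\prod_{j\in\Phi_N}z_j=L_N+iM_N$, where $L_N=\sum_{j\in\Phi_N}\log\cos(\theta_j/2)\le0$ and $M_N=\sum_{j\in\Phi_N}\arg(z_j)$ is the arctangent sum. Since $w_N=e^{L_N+iM_N}$, the product converges iff either $L_N\to-\infty$ (forcing $w_N\to0$) or $L_N$ converges in $\R$ and $M_N$ converges modulo $2\pi$. The first option says precisely $\prod_{j\in\Phi_N}\tfrac{1+\cos\theta_j}{2}=e^{2L_N}\to0$, matching the first alternative. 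It remains to show that, off this branch, ``$L_N$ converges and $M_N$ converges mod $2\pi$'' is equivalent to ``$M_N$ converges in $\R$'', the second alternative. The structural fact binding the two coordinates is $|z_j|=\cos(\arg z_j)$: convergence of $\prod\cos(\arg z_j)$ to a nonzero limit forces $\sum(\arg z_j)^2<\infty$, hence $\arg z_j\to0$, after which I would invoke the elementary principle that a real sequence whose consecutive increments tend to $0$ and which converges modulo $2\pi$ must converge in $\R$; this upgrades convergence of $M_N$ mod $2\pi$ to genuine convergence. Conversely, convergence of $M_N$ in $\R$ trivially gives convergence mod $2\pi$, while, off the first branch, convergence of $L_N$ must be extracted separately.

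The main obstacle is exactly this last equivalence for a general, non-increasing F\o lner sequence. When $\Phi$ is increasing, $L_N$ is monotone decreasing and bounded below, so it either diverges to $-\infty$ or converges in $\R$, and the increments of $M_N$ are the finitely many newly added terms $\arg z_j$, which tend to $0$ once $L_N$ converges; the elementary principle then applies verbatim and the argument closes cleanly (degenerate indices with $\alpha^{n_j}=-1$, where $z_j=0$ and the arctangent is undefined, drive $L_N\to-\infty$, placing us in the first alternative, and so cause no trouble). For a general F\o lner sequence I have neither monotonicity of $L_N$ nor single-term increments of $M_N$, so the modulus and the argument can \emph{a priori} decouple and one can imagine windows $\Phi_N$ along which $M_N$ converges while $L_N$ oscillates. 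Here I would expect either to first settle the increasing case and then transfer, or to exploit the arithmetic structure $\theta_j=\arg(\alpha^{n_j})$ to preclude such oscillation; this is the step I anticipate requiring the most care.
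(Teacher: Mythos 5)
You take the same route as the paper: Theorem~\ref{spectralcriteria} reduces everything to the scalar averages, Proposition~\ref{avgtoprod} turns these into the products $\prod_{j\in\Phi_N}z_j$, and the half-angle identities identify the displayed product and the arctangent sum with the squared modulus and the accumulated argument. Your Part~\eqref{item:one} is complete and matches the paper. The gap is in Part~\eqref{item:two}, and it is genuine. Your argument for increasing F\o lner sequences rests on the claim that the increments of $M_N=\sum_{j\in\Phi_N}\arg z_j$ tend to zero because the newly added terms do. But in this paper an increasing F\o lner sequence is $\Phi_N=[M,a_N]$ with $a_N\to\infty$ arbitrary, so the increment $M_{N+1}-M_N=\sum_{a_N<j\leq a_{N+1}}\arg z_j$ is a block sum of many terms; knowing $\arg z_j\to 0$, or even $\sum_j(\arg z_j)^2<\infty$, does not make such block sums small. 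Concretely, take $\alpha$ irrational and, using density of $\{\alpha^n\}$, choose $n_j$ with $\arg(\alpha^{n_j})$ within $1/j^3$ of $2/j$, so that $\theta_j:=\arg z_j = 1/j+O(1/j^3)$. Then $\sum_j\theta_j^2<\infty$, so the modulus product converges to a nonzero limit $c$, while $S_n=\sum_{j\leq n}\theta_j\to\infty$ with steps tending to zero. Taking $a_N$ minimal with $S_{a_N}\geq 2\pi N$ gives $S_{a_N}=2\pi N+o(1)$, and along $\Phi_N=[1,a_N]$ the scalar average converges to $c\neq 0$, the argument converges mod $2\pi$, yet the arctangent sum diverges in $\R$. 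So the single-$\alpha$ equivalence you set out to prove (``off the zero-modulus branch, convergence of the scalar average is equivalent to convergence of the arctangent sum in $\R$'') is simply false for increasing F\o lner sequences; your elementary principle applies only when $\Phi_{N+1}\setminus\Phi_N$ is a single element, as for $\Phi_N=[1,N]$. Consequently, any correct proof of the forward direction of Part~\eqref{item:two} must exploit the quantifier over \emph{all} $\alpha$ (equivalently, all systems) rather than argue $\alpha$ by $\alpha$, and neither your increasing-case argument nor your admittedly open general case does this.

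In fairness, you have put your finger on a soft spot in the paper itself. The paper's proof invokes the ``general criterion'' that a complex sequence converges if and only if the norms tend to zero or the norms and the arguments converge; this is correct only when the argument is read modulo $2\pi$ (consider $w_N=e^{2\pi iN}$), and passing from convergence modulo $2\pi$ to convergence in $\R$ of the specific cumulative sum $\sum_{j\in\Phi_N}\arctan(\cdot)$ is exactly the step missing there as well. The paper also asserts that the norms ``always converge,'' which uses nestedness of the $\Phi_N$ and is unjustified for a general F\o lner sequence (this is the same issue you flag when you say convergence of $L_N$ ``must be extracted separately''). So your diagnosis of where the difficulty lies is sharper than the paper's treatment, but as it stands your proposal does not prove the theorem: the increasing case is closed by an invalid step, and the general case is left open by your own account.
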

  
\begin{proof}
    Fixing  $j\in\mathbb{N}$ and $\alpha\in \CS^1$, set $z_j = z_j(\alpha) = \frac{1+\alpha^{n_j}}{2}$. 
    Rewriting, we have that $z_j = r_j e(i\theta_j)$ for some $r_j\geq 0$ and $\theta_j\in (-\frac{\pi}{2},\frac{\pi}{2})$. Using the identity
    \[z_j(\alpha) = \frac{1+\cos(\arg(\alpha^{n_j}))}{2} + i\frac{\sin(\arg(\alpha^{n_j}))}{2},\]
we can rewrite 
$r_j=\sqrt{\frac{1+\cos(\arg(\alpha^{n_j}))}{2}}$ and $\theta_j = \arctan\left(\frac{\sin(\arg(\alpha^{n_j}))}{1+\cos(\arg(\alpha^{n_j}))}\right)$. In particular, it follows from Proposition~\ref{avgtoprod}  that 
    $$\E_{n\in \IP_{\Phi_N}
    }    
    \alpha^n = \Bigl(\prod_{j\in\Phi_N}r_j\Bigr)\cdot e(i\sum_{j\in\Phi_N}\theta_j).$$
    By Theorem~\ref{spectralcriteria}, the ergodic average along $\IP_{\Phi_N}\bigl((n_j)_{j\in\mathbb{N}}\bigr)$ exists if and only if this quantity  converges as $N\rightarrow\infty$. 
    We use the general criterion that a  sequence of complex numbers converges if and only if either the norms converge to zero or the norms converge and the arguments converge. 
    In the first case, if the norms converge to zero for all $\alpha\not = 1$, then $\E_{n\in\IP_{\Phi_N}} \alpha^n = \delta_1(\alpha)$ and so Part~\eqref{item:one} follows from  Theorem~\ref{spectralcriteria}. Otherwise, since $z_j$ is the average of two numbers in the unit circle, it follows that $|z_j|\leq 1$ and so the norm of $\E_{n\in\IP_{\Phi_N}}\alpha^n$ always converges. 
    If the limit is nonzero, then by Theorem~\ref{spectralcriteria} the limit exists if and only if the arguments converge, proving Part~\eqref{item:two}.
\end{proof}

\subsection{Sufficient conditions for convergence}
\label{sec:conv-more}
We start with a simple proposition showing that for the average along an $\IP$ to converge to the projection onto the $T$-invariant functions, it suffices that the average along the generators does so.  
\begin{proposition}
    Let $(n_j)_{j\in\mathbb{N}}$ be a sequence of natural numbers and let $\Phi=(\Phi_N)_{N\in\mathbb{N}}$ be a F\o lner sequence in $\N$. Assume that for all ergodic rotations $\ZZ=(Z,\mathcal{C},\nu,R_a)$ and all $f\in L^2(\nu)$, we have 
    \[\lim_{N\rightarrow\infty} \E_{n\in \Phi_N} f(a^nx) = \int_Zf\d\nu\] 
    in $L^2(\nu)$.   
    Then for every ergodic system $\XX=(X,\mathcal{B},\mu,T)$ and all $f\in L^2(\mu)$, we have
    $$\lim_{N\rightarrow\infty} \EIP T^n f = \int_\XX f \d\mu.$$
\end{proposition}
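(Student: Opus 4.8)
The plan is to reduce the statement to the spectral criterion of Theorem~\ref{spectralcriteria} and then verify that criterion using the product formula of Proposition~\ref{avgtoprod}. Since $\XX$ is ergodic, the projection onto the $T$-invariant functions is the constant $\int_\XX f\d\mu$, so the desired conclusion is exactly that the ergodic average converges to the invariant projection. By the equivalence of~\eqref{eq:threethree} and~\eqref{eq:fourfour} in Theorem~\ref{spectralcriteria}, it therefore suffices to prove that $\lim_{N\to\infty}\sEIP\alpha^n=\delta_1(\alpha)$ for every $\alpha\in\CS^1$. The case $\alpha=1$ is immediate, so I fix $\alpha\neq1$ throughout.

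First I extract the relevant consequence of the hypothesis. Let $Z$ be the closure in $\CS^1$ of the cyclic group generated by $\alpha$, and let $R_a$ be the rotation on $Z$ by $a=\alpha$; this is an ergodic rotation, and since $\alpha\neq1$ the inclusion character $\chi\colon Z\hookrightarrow\CS^1$ is nontrivial, so $\int_Z\chi\d\nu=0$. Because $\chi(R_a^{n_j}z)=\alpha^{n_j}\chi(z)$, applying the hypothesis (the average of $\chi$ along the generators converges to $\int_Z\chi\d\nu$) gives
\[
\frac{1}{|\Phi_N|}\sum_{j\in\Phi_N}\alpha^{n_j}\longrightarrow 0\qquad(N\to\infty).
\]

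Next I turn this first-moment decay into decay of the product. By Proposition~\ref{avgtoprod} we have $\sEIP\alpha^n=\prod_{j\in\Phi_N}\frac{1+\alpha^{n_j}}{2}$, and taking absolute values together with the identity $\bigl|\frac{1+\alpha^{n_j}}{2}\bigr|^2=\frac{1+\cos(\arg(\alpha^{n_j}))}{2}$ and the elementary inequality $1-x\le e^{-x}$ (applied with $x=\frac{1-\cos(\arg(\alpha^{n_j}))}{2}$) yields
\[
\Bigl|\sEIP\alpha^n\Bigr|^2=\prod_{j\in\Phi_N}\frac{1+\cos(\arg(\alpha^{n_j}))}{2}\le\exp\!\Bigl(-\tfrac12\sum_{j\in\Phi_N}\bigl(1-\operatorname{Re}\alpha^{n_j}\bigr)\Bigr).
\]
Writing $\sum_{j\in\Phi_N}(1-\operatorname{Re}\alpha^{n_j})=|\Phi_N|\bigl(1-\operatorname{Re}\tfrac{1}{|\Phi_N|}\sum_{j\in\Phi_N}\alpha^{n_j}\bigr)$ and using that the averaged term tends to $0$ by the previous step while $|\Phi_N|\to\infty$ for every F\o lner sequence in $\N$, the exponent tends to $-\infty$. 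Hence $\sEIP\alpha^n\to0=\delta_1(\alpha)$, which completes the verification and the proof.

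I expect the main obstacle to be the passage from the first-moment information $\frac{1}{|\Phi_N|}\sum_{j\in\Phi_N}\alpha^{n_j}\to0$ to the multiplicative conclusion that the product collapses: a vanishing average controls only one linear statistic of the phases $\arg(\alpha^{n_j})$, and a priori the product $\prod\frac{1+\alpha^{n_j}}{2}$ could remain bounded away from $0$ were the phases to concentrate near $0$. What rescues the argument is the precise combination of Proposition~\ref{avgtoprod} with the bound $1-x\le e^{-x}$: since $|\Phi_N|\to\infty$ and the average of $1-\operatorname{Re}\alpha^{n_j}$ tends to $1$, the sum $\sum_{j\in\Phi_N}(1-\operatorname{Re}\alpha^{n_j})$ diverges, and this divergence is exactly what forces the product to zero.
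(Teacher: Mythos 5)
Your proof is correct, and its overall route coincides with the paper's: both reduce the statement via Theorem~\ref{spectralcriteria} to showing $\lim_{N\to\infty}\sEIP\alpha^n=\delta_1(\alpha)$, both extract from the hypothesis (tested on a character of an ergodic rotation) that $\E_{j\in\Phi_N}\alpha^{n_j}\to0$ for $\alpha\neq1$, and both then kill the product $\prod_{j\in\Phi_N}\frac{1+\alpha^{n_j}}{2}$ furnished by Proposition~\ref{avgtoprod}. Where you differ is the final step. The paper argues qualitatively: a triangle-inequality estimate shows that the set $A_N$ of indices $j\in\Phi_N$ with $|\alpha^{n_j}-1|<1/10$ satisfies $\limsup_N |A_N|/|\Phi_N|<1$, so the number of factors whose modulus is bounded away from $1$ tends to infinity, and the product collapses. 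You argue quantitatively: $\bigl|\sEIP\alpha^n\bigr|^2=\prod_{j\in\Phi_N}\frac{1+\operatorname{Re}\alpha^{n_j}}{2}\le\exp\bigl(-\tfrac12\sum_{j\in\Phi_N}(1-\operatorname{Re}\alpha^{n_j})\bigr)$, and the exponent equals $-\tfrac{|\Phi_N|}{2}\bigl(1-\operatorname{Re}\,\E_{j\in\Phi_N}\alpha^{n_j}\bigr)\to-\infty$. Your version is slightly cleaner and yields an explicit exponential rate in $|\Phi_N|$, while the paper's needs only the softer fact that infinitely many factors stay bounded away from $1$. Both arguments rely on $|\Phi_N|\to\infty$, which you assert without justification; it does hold for every F\o lner sequence in $\N$, since $|(a+F)\Delta F|\ge 2$ for every nonempty finite $F\subseteq\N$ and $a\ge 1$, so this is only a one-line gap.
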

\begin{proof}
    The hypothesis of convergence in the proposition holds if and only if $\lim_{N\rightarrow\infty} \E_{j\in \Phi_N} t^{n_j} = \delta_1(t)$ for all $t\in \CS^1$. Fix $t\not =1$, and consider
    $$A_N := \{ j\in \Phi_N :  |t^{n_j}-1| < 1/10\}.$$ 
Using the triangle inequality, we have that 
    \[\Big|\frac{1}{|\Phi_N|} \sum_{j\in A_N} t^{n_j} \Big| \leq \Big|\E_{j\in \Phi_N} t^{n_j}\Big| + \Big|\frac{1}{|\Phi_N|}\sum_{j\not\in A_N} t^{n_j}\Big| .
    \]
    Since $|t^{n_j}-1|<1/10$ for all $j\in A_N$, the left hand side is bounded from  below by $\frac{9|A_N|}{10|\Phi_N|}$. Fixing arbitrary  $\varepsilon>0$, it follows that for sufficiently large $N$ we have that 
    \[\frac{9|A_N|}{10|\Phi_N|} \leq \varepsilon + \Bigl(1-\frac{|A_N|}{|\Phi_N|}\Bigr).\] 
    Choosing any $0 < \varepsilon <9/10$, it follows that $\limsup_{N\rightarrow\infty} \frac{|A_N|}{|\Phi_N|}<1$, 
    and so for all sufficiently large $N$, we have $\frac{|A_N|}{|\Phi_N|}<1-c$ for some sufficiently small $c>0$. In particular, the number of $j\in \Phi_N$ satisfying $|t^{n_j}-1|>1/10$ is arbitrarily large as $N\rightarrow\infty$. By Proposition~\ref{avgtoprod}, it follows that $$\lim_{N\rightarrow\infty}\sEIP t^n = \lim_{N\rightarrow\infty} \prod_{j\in \Phi_N} \Bigl(\frac{1+t^{n_j}}{2}\Bigr)=0.$$ 
    As this holds for all $1\not=t\in \CS^1$, convergence follows from Theorem~\ref{spectralcriteria}.
\end{proof}

  This proposition  applies to many  sequences, including polynomial sequences (see for example~\cite{walsh2012norm}), the primes and  polynomials in primes~\cite{wooley, FHK}, and many other sequences such as those in~\cite{Boshernitzan2005sequences, Fran2010}.

A particular class of dynamical systems in which we can describe convergence is when the eigenspaces generate $L^2(\mu)$ and additionally all nontrivial eigenspaces are associated to rational eigenvalues.
\begin{definition}
    If $\XX=(X,\mathcal{B},\mu,T)$ is an invertible measure preserving system,  let $\mathcal{K}_{\rat}(\XX)$ denote the minimal factor of $\XX$ such that any eigenfunction of $\XX$ with rational eigenvalue is measurable with respect to $\mathcal{K}_{\rat}(\XX)$.  The system $\XX$ is \emph{rational} if $\XX=\mathcal{K}_{\rat}(\XX)$.
\end{definition}
Thus $\mathcal{K}_{\rat}$ is the factor generated by the eigenfunctions that are associated  to the rational eigenvalues.  It is immediate that any finite system is rational. 
A more interesting example is obtained by taking $X = \mathbb{Z}_p$ is the $p$-adic numbers for some fixed prime $p$, equipped with the Borel $\sigma$-algebra, Haar measure, and the transformation is translation by some fixed $t\in \mathbb{Z}_p$. Any character $\chi$ of $\mathbb{Z}_p$ is an eigenfunction whose eigenvalue $\chi(t)$ is a $p^m$ root of unity for some $m\in\mathbb{N}$, and these characters form an orthonormal basis for $\mathbb{Z}_p$. Note that if the group generated by $t$ is dense, then the  system is also ergodic.

Recall that a function $f\colon X\to \C$ is \emph{$1$-bounded} if $|f(x)|\leq 1$ for all $x\in X$ and 
we  let $P_t\colon L^2(\mu)\rightarrow L^2(\mu)$ denote the projection onto the $t$-eigenspace of $T$.

\begin{theorem}[Convergence in  $\mathcal{K}_\rat(\XX)$]\label{METKrat}
Let $(n_j)_{j\in \mathbb{N}}$ be a sequence of natural numbers and let $\Phi=(\Phi_N)_{N\in\mathbb{N}}$ be an increasing F\o lner sequence. Then there exists a $1$-bounded function $\omega_{\Phi}\colon e^{2\pi i \mathbb{Q}}\rightarrow \mathbb{C}$ such  that for every invertible measure preserving system $\XX=(X,\mathcal{B},\mu,T)$ and $f\in L^2(\mu)$ measurable with respect to $\mathcal{K}_{\rat}(\XX)$, we have
$$\lim_{N\rightarrow\infty}\EIP T^n f = \sum_{t\in e^{2\pi i \mathbb{Q}}} \omega_{\Phi}(t)\cdot P_t(f)$$ in $L^2(\mu)$.  
\end{theorem}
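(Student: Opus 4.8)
The plan is to decouple the dynamics from the combinatorics: for a function measurable with respect to $\mathcal{K}_{\rat}(\XX)$ the operator $T$ acts diagonally on the rational eigenspaces, so the whole average is governed by the scalar products $\prod_{j\in\Phi_N}\frac{1+t^{n_j}}{2}$ studied in Proposition~\ref{avgtoprod}. Concretely, since $\mathcal{K}_{\rat}(\XX)$ is generated by the eigenfunctions with rational eigenvalue---and the product of two such eigenfunctions is again one, so this factor is exactly the closed linear span of them---and distinct eigenspaces are mutually orthogonal, any $f\in L^2(\mu)$ measurable with respect to $\mathcal{K}_{\rat}(\XX)$ decomposes orthogonally as $f=\sum_{t\in e^{2\pi i\Q}}P_tf$ with $\sum_{t}\|P_tf\|^2=\|f\|^2$. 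Applying the average term by term, using $T^nP_tf=t^nP_tf$ and Proposition~\ref{avgtoprod}, gives
\[
\EIP T^n f=\sum_{t\in e^{2\pi i\Q}}c_{t,N}\,P_tf,\qquad c_{t,N}:=\prod_{j\in\Phi_N}\frac{1+t^{n_j}}{2},\quad |c_{t,N}|\le 1.
\]
So it suffices to prove that $\lim_{N\to\infty}c_{t,N}$ exists for every $t\in e^{2\pi i\Q}$, to set $\omega_\Phi(t)$ equal to this limit, and to justify passing the limit inside the sum.

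The key step is the existence of $\lim_N c_{t,N}$, and this is exactly where the rationality of $t$ enters. Writing $t$ as a primitive $q$-th root of unity, each factor $z_j:=\frac{1+t^{n_j}}{2}$ obeys a dichotomy: either $t^{n_j}=1$, in which case $z_j=1$, or $t^{n_j}=e^{2\pi i m/q}$ with $1\le m\le q-1$, in which case $|z_j|=|\cos(\pi m/q)|\le\cos(\pi/q)<1$. There is thus a uniform spectral gap between the two alternatives, and I would split into two cases according to the cardinality of $S:=\{j\ge M : t^{n_j}\ne1\}$. If $S$ is infinite, then because $\Phi_N=[M,a_N]$ with $a_N\to\infty$ the number of factors of modulus $\le\cos(\pi/q)$ appearing in $c_{t,N}$ tends to infinity, so $c_{t,N}\to0$. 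If $S$ is finite, say $S\subseteq[M,J_0]$, then for all $N$ with $a_N\ge J_0$ every factor of index $>J_0$ equals $1$, so $c_{t,N}$ stabilizes at the finite product $\prod_{j\in S}z_j$. In either case the limit exists; I define $\omega_\Phi(t)$ to be this value, which depends only on $(n_j)$, $\Phi$ and $t$, not on $\XX$, and satisfies $|\omega_\Phi(t)|\le1$.

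Finally I would assemble the $L^2$ convergence. By orthogonality of the eigenspaces,
\[
\Bigl\|\EIP T^nf-\sum_{t\in e^{2\pi i\Q}}\omega_\Phi(t)P_tf\Bigr\|^2=\sum_{t\in e^{2\pi i\Q}}|c_{t,N}-\omega_\Phi(t)|^2\,\|P_tf\|^2 .
\]
Here $|c_{t,N}-\omega_\Phi(t)|^2\le4$ for every $t$ and $N$, the weights $\|P_tf\|^2$ are summable with total mass $\|f\|^2$, and by the previous step each summand tends to $0$ as $N\to\infty$. The dominated convergence theorem, applied to the counting measure on $e^{2\pi i\Q}$ weighted by $\|P_tf\|^2$, then forces the whole sum to $0$, which is the desired convergence.

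I expect the main obstacle to be the existence of $\lim_N c_{t,N}$: the dichotomy above shows each factor is either exactly $1$ or bounded away from $1$ in modulus, with no accumulation at modulus one from below, and this is precisely what fails for irrational $t$. The increasing hypothesis $\Phi_N=[M,a_N]$ is used essentially in the finite case, where it guarantees that once $a_N$ passes the largest relevant index the product no longer changes; for a general F\o lner sequence the factors from a fixed finite set could reappear and disappear, destroying convergence. The eigenfunction decomposition and the dominated-convergence interchange are then routine.
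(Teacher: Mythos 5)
Your proof is correct and follows essentially the same route as the paper's: reduce via Proposition~\ref{avgtoprod} to the scalar products $\prod_{j\in\Phi_N}\frac{1+t^{n_j}}{2}$, establish the dichotomy for rational $t$ (stabilization when only finitely many $t^{n_j}\neq 1$, convergence to $0$ otherwise, using that $\Phi$ is increasing), and then decompose $f$ into rational eigenspaces. Your explicit spectral gap $|z_j|\le\cos(\pi/q)$ and the dominated-convergence justification for interchanging the limit with the eigenspace sum are slightly more detailed than the paper's write-up, but the argument is the same.
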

\begin{proof}
  Let $(n_j)_{j\in\mathbb{N}}$ and $\Phi=(\Phi_N)_{N\in\mathbb{N}}$ be as in the statement. We first check that the limit 
    \begin{equation}
\label{eq:omega-limit}
  \omega_{\Phi}(t):=\lim_{N\rightarrow\infty}\sEIP t^n
    \end{equation}
    exists for all $t\in e^{2\pi i\mathbb{Q}}$. By Proposition~\ref{avgtoprod}, this limit is equal to
    $$\lim_{N\rightarrow\infty}\prod_{j\in\Phi_N} \left(\frac{1+t^{n_j}}{2}\right).$$
    We consider two cases.  If $t^{n_j}=1$ for all but finitely many values of $j$, 
    then this infinite product is eventually equal to $$\prod_{\{j\text{ } :\text{ } t^{n_j}\not = 1\}} \left(\frac{1+t^{n_j}}{2}\right) $$ 
    and so the limit exists. Otherwise, there are infinitely many $j$ such that 
    $t^{n_j}\not= 1$. Then we can choose some $\varepsilon_0>0$, depending only on $t$, such that $\left|\left(\frac{1+t^{n_j}}{2}\right)\right|<1-\varepsilon_0$. Since the F\o lner sequence $\Phi$  is increasing, it eventually contains arbitrarily many such  $j$ and \[\lim_{N\rightarrow\infty}\prod_{j\in\Phi_N} \left(\frac{1+t^{n_j}}{2}\right)=0.\]
    Thus the limit~\eqref{eq:omega-limit} exists.

  Since $f$ is measurable with respect to $\mathcal{K}_{\rat}(
    \XX)$, we can write $$f= \sum_{t\in e^{2\pi i \mathbb{Q}}} P_t(f).
    $$
    Since $T (P_t(f)) = t\cdot P_t(f)$, we conclude that
    $$\lim_{N\rightarrow\infty} \EIP T^n f = \sum_{t\in e^{2\pi i \mathbb{Q}}} \omega_{\Phi}(t) P_t(f),$$ and the statement follows.
\end{proof}
This leads to the following definition.
\begin{definition}[Spectrum of a sequence]\label{spectrum}
     The \emph{spectrum} $\sigma((n_j)_{k\in\mathbb{N}})$ of a sequence of natural numbers $(n_j)_{j\in\mathbb{N}}$ is defined to be the group generated by the complement of the set
    $$\bigcap_{m\in\mathbb{N}} \left\{\alpha \in \CS^1 : \lim_{d\rightarrow\infty} \prod_{j=m}^d \left(\frac{1+\alpha^{\ n_j}}{2}\right)=0\right\}.$$ We say that a sequence has \emph{rational spectrum} if $\sigma((n_j)_{j\in\mathbb{N}})\subseteq e^{2\pi i \mathbb{Q}}.$ 
\end{definition}
The intersection over $m$ is taken to ensure that for every $\alpha\not\in \sigma((n_j)_{j\in\mathbb{N}})$, and 
every increasing F\o lner sequence $\Phi=(\Phi_N)_{N\in\mathbb{N}}$, the quantity
    \begin{equation}\label{omega}
        \omega_{\Phi}(\alpha):=\lim_{N\rightarrow\infty} \prod_{j\in \Phi_N} \left(\frac{1+\alpha^{n_j}}{2}\right)
    \end{equation}
    is zero.  

The next result follows quickly from  Theorem~\ref{METKrat}. 
\begin{corollary}\label{METrational}
    Let $\XX=(X,\mathcal{B},\mu,T)$ be an invertible measure preserving system and let $(n_j)_{j\in\mathbb{N}}$ be a sequence with rational spectrum. Then for any increasing F\o lner sequence $\Phi=(\Phi_N)_{N\in\mathbb{N}}$, we have 
    $$\EIP T^n  = \sum_{t\in \sigma((n_j)_{j\in\mathbb{N}})} \omega_{\Phi}(t)\cdot P_t$$ in the strong operator topology.
    \end{corollary}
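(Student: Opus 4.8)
The plan is to deduce this operator statement from Theorem~\ref{METKrat} by decomposing an arbitrary $f\in L^2(\mu)$ relative to the rational Kronecker factor and showing that the orthogonal complement contributes nothing to the limit. First I would write $f = f_1 + f_2$, where $f_1 = \E(f\mid \mathcal{K}_{\rat}(\XX))$ is measurable with respect to $\mathcal{K}_{\rat}(\XX)$ and $f_2 = f - f_1$ is orthogonal to $L^2(\mathcal{K}_{\rat}(\XX))$. Applying Theorem~\ref{METKrat} directly to $f_1$ then gives
$$\lim_{N\rightarrow\infty}\EIP T^n f_1 = \sum_{t\in e^{2\pi i\mathbb{Q}}}\omega_{\Phi}(t)\, P_t(f_1)$$
in $L^2(\mu)$.

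Next I would rewrite this sum in the claimed form. For every $t\notin \sigma((n_j)_{j\in\mathbb{N}})$ the discussion following Definition~\ref{spectrum} gives $\omega_{\Phi}(t)=0$, so only the terms with $t\in\sigma((n_j)_{j\in\mathbb{N}})\subseteq e^{2\pi i\mathbb{Q}}$ survive. Moreover, for rational $t$ the $t$-eigenspace lies inside $L^2(\mathcal{K}_{\rat}(\XX))$, so that $P_t(f_2)=0$ and hence $P_t(f_1)=P_t(f)$. Therefore the limit of $\EIP T^n f_1$ equals $\sum_{t\in\sigma((n_j)_{j\in\mathbb{N}})}\omega_{\Phi}(t)\,P_t(f)$, which is exactly the right-hand side in the statement.

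It then remains to show that $\EIP T^n f_2\to 0$ in $L^2(\mu)$. Let $\d\rho(t)$ denote the projection-valued measure of $T$ and let $\rho_{f_2}$ be the associated scalar spectral measure, defined by $\rho_{f_2}(A)=\langle \rho(A) f_2, f_2\rangle$. By Proposition~\ref{avgtoprod} together with the spectral calculus,
$$\Big\|\EIP T^n f_2\Big\|^2 = \int_{\CS^1} \Big|\prod_{j\in\Phi_N}\tfrac{1+t^{n_j}}{2}\Big|^2 \d\rho_{f_2}(t).$$
Under the rational spectrum hypothesis the integrand converges pointwise, as $N\to\infty$, to $|\omega_{\Phi}(t)|^2$ for every $t\in\CS^1$: for rational $t$ this is the dichotomy used in the proof of Theorem~\ref{METKrat}, while for every other $t$ (in particular every irrational $t$, since $\sigma((n_j)_{j\in\mathbb{N}})$ consists of rational points) we have $\omega_{\Phi}(t)=0$. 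Since the integrand is bounded by $1$, the dominated convergence theorem yields the limit $\int_{\CS^1}|\omega_{\Phi}(t)|^2\d\rho_{f_2}(t)$. As $\omega_{\Phi}$ is supported on the countable set $\sigma((n_j)_{j\in\mathbb{N}})\subseteq e^{2\pi i\mathbb{Q}}$, and as $\rho_{f_2}(\{t\})=\|P_t(f_2)\|^2=0$ at every rational $t$, this integral vanishes. Combining this with the previous paragraph completes the argument.

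The main point requiring care is the treatment of $f_2$: one must verify that the scalar spectral measure $\rho_{f_2}$ charges no point of the support of $\omega_{\Phi}$. This is where the rational spectrum hypothesis (which forces $\omega_{\Phi}$ to be supported on a countable set of rational points) and the definition of $\mathcal{K}_{\rat}(\XX)$ (which forces $f_2$ to carry no atomic spectral mass at rational eigenvalues) are used in tandem. The pointwise convergence of the products to $|\omega_{\Phi}(t)|^2$ needed to invoke dominated convergence is the other place where the rational spectrum assumption is essential, since it is precisely this that guarantees the limit exists at every $t\in\CS^1$ rather than only on $\sigma((n_j)_{j\in\mathbb{N}})$.
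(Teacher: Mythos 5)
Your proof is correct. The paper gives no written proof here---it says only that the corollary ``follows quickly from Theorem~\ref{METKrat}''---and your argument is a valid completion of precisely that deduction: split $f=f_1+f_2$ over $\mathcal{K}_{\rat}(\XX)$, apply Theorem~\ref{METKrat} to $f_1$, and kill $f_2$ by spectral calculus. One observation worth making: the dominated-convergence argument you run for $f_2$ already proves the whole corollary with no decomposition at all. Indeed, set $g_N(t)=\prod_{j\in\Phi_N}\frac{1+t^{n_j}}{2}$; your pointwise claim shows $g_N(t)\to\omega_{\Phi}(t)$ for every $t\in\CS^1$ (rational $t$ by the dichotomy in the proof of Theorem~\ref{METKrat}, irrational $t$ because such $t$ lie outside $\sigma((n_j)_{j\in\N})$ and the discussion after Definition~\ref{spectrum} applies), so by Proposition~\ref{avgtoprod}, the spectral theorem, and dominated convergence---exactly as in the proof of Theorem~\ref{spectralcriteria}---one gets, for every $f\in L^2(\mu)$, that $\lim_{N\to\infty}\EIP T^nf=\bigl(\int_{\CS^1}\omega_{\Phi}(t)\d\rho(t)\bigr)f$; and since $\omega_{\Phi}$ vanishes off the countable set $\sigma((n_j)_{j\in\N})\subseteq e^{2\pi i\Q}$ and $\rho(\{t\})=P_t$, this operator equals $\sum_{t\in\sigma((n_j)_{j\in\N})}\omega_{\Phi}(t)P_t$. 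That one-pass argument is presumably the ``quick'' proof the authors intend; your two-step route is equally valid, at the cost of the extra bookkeeping (checking $P_t(f_1)=P_t(f)$ and $\rho_{f_2}(\{t\})=0$ at rational $t$), and has the small virtue of reusing Theorem~\ref{METKrat} as a black box rather than repeating the spectral computation on the $\mathcal{K}_{\rat}$ part.
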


For example, for every integer $a\geq 2$, the sequence $n_j=a^{j-1}$ has rational spectrum. To check this it suffices to show that for any irrational $\alpha$, the sequence $\alpha^{n_j}$ does not converges to $1$ as $j\rightarrow\infty$.
    Considering the expansion of $\arg(\alpha)$ in base $a$, taking a power $\alpha^{n_j}$ shifts the digits $(j-1)$ times to the left.  If $\alpha^{n_j}\to 1$, this implies that for sufficiently large $j$ all entries in the decimal expansion of $\arg(\alpha)$ are $0$, contradicting that $\alpha$ is irrational. On the other hand, any sequence $(n_j)_{j\in\N}$ satisfying $\lim_{j\to\infty} n_{j+1}/n_j$ has uncountable spectrum (see~\cite[Proposition 3.5]{bergelson2014rigidity}) and so in particular is not rational. 

The next corollary is a direct application of Corollary~\ref{METrational}.
\begin{corollary} 
    Let $\XX=(X,\mathcal{B},\mu,T)$ be an invertible measure preserving system, let $a\geq 2$ be an integer, and set $n_j=a^{j-1}$. 
    \begin{enumerate}
              \item For all $f\in L^2(\mu)$ and any increasing F\o lner sequence $\Phi=(\Phi_N)_{N\in\mathbb{N}}$, the ergodic average~\eqref{eq:ergodic-average} of $f$ along $\IP_{\Phi_N}\big((n_j)_{j\in\mathbb{N}}\big)$        exists in $L^2(\mu)$. 
        \item 
        \label{item:totally}
        Furthermore, if $\XX$ is ergodic and the points $e(m/p^n)$ for $m,n\in\mathbb{N}$ and $p$ any prime divisor of $a$ do not lie in the point spectrum of $\XX$, then the ergodic average~\eqref{eq:ergodic-average} of $f$ along $\IP_{\Phi_N}\bigl((n_j)_{j\in \N}\bigr)$ converges to $\int_X f \d\mu$. 
     \end{enumerate}
\end{corollary}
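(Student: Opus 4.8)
The plan is to deduce both parts directly from Corollary~\ref{METrational}, whose hypothesis is already available since the discussion preceding this corollary establishes that $n_j = a^{j-1}$ has rational spectrum. For Part~(1) this is essentially immediate: Corollary~\ref{METrational} asserts that for any increasing F\o lner sequence the operators $\EIP T^n$ converge in the strong operator topology to $\sum_{t\in\sigma((n_j)_{j\in\mathbb{N}})}\omega_\Phi(t)\, P_t$, and in particular $\EIP T^n f$ converges in $L^2(\mu)$ for every $f\in L^2(\mu)$.

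For Part~(2) the first step is to determine the spectrum $\sigma((n_j)_{j\in\mathbb{N}})$ explicitly, and I would show that it consists exactly of the roots of unity $e(m/q)$ for which every prime factor of $q$ divides $a$. Writing $\alpha = e(\theta)$, one has $\alpha^{n_j} = e(a^{j-1}\theta)$, and by Definition~\ref{spectrum} the relevant tail products fail to vanish precisely when $\alpha^{n_j}\to 1$ quickly enough. The irrational case of $\theta$ is ruled out by the base-$a$ digit-shift argument already recorded. For rational $\theta = p/q$ in lowest terms, $\alpha^{n_j}=1$ holds for all large $j$ if and only if $q\mid a^{j-1}$ eventually, which happens exactly when each prime factor of $q$ divides $a$; in that case choosing $m$ past the finitely many indices with $\alpha^{n_j}\neq 1$ makes the tail product equal to $1$, so $\alpha$ lies in the complement. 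Conversely, if some prime factor $r$ of $q$ does not divide $a$, then $a^{j-1}p \bmod q$ is eventually periodic and never $\equiv 0$, so $\alpha^{n_j}\not\to 1$ and $\alpha$ lies in the intersecting set. Since the resulting family of roots of unity is already closed under multiplication and inversion, $\sigma((n_j)_{j\in\mathbb{N}})$ is precisely the group of all roots of unity whose order is a product of prime divisors of $a$.

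The second step is to collapse the sum from Corollary~\ref{METrational}. Since $P_t = 0$ unless $t$ lies in the point spectrum, it suffices to show that the only element of $\sigma((n_j)_{j\in\mathbb{N}})$ lying in the point spectrum is $t=1$. Here I would use the group structure of the point spectrum of an ergodic system: if some $t = e(m/q)\in\sigma((n_j)_{j\in\mathbb{N}})$ with $t\neq 1$ were an eigenvalue, then writing its order as $\prod p_i^{l_i}$ with each $p_i\mid a$ and choosing any $p=p_i$, a suitable power of $t$ would be a primitive $p^{l_i}$-th root of unity $e(m'/p^{l_i})$ and hence again an eigenvalue, contradicting the hypothesis. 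Thus $P_t = 0$ for every $t\neq 1$ in the spectrum, and the limit reduces to $\omega_\Phi(1)\,P_1$. Finally $\omega_\Phi(1) = 1$ because $1^n = 1$, and by ergodicity $P_1 f = \int_X f\d\mu$, which gives the claim.

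I expect the main obstacle to be exactly this interplay between the explicit spectrum and the group-theoretic reduction: the hypothesis only removes the prime-power roots $e(m/p^n)$ from the point spectrum, whereas the spectrum contains roots of unity whose order involves several distinct primes dividing $a$, so one must exploit that the eigenvalues of an ergodic system form a subgroup of $\CS^1$ (and take appropriate powers of a hypothetical eigenvalue) in order to eliminate these composite-order elements one prime at a time.
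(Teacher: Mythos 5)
Your proposal is correct and follows exactly the route the paper intends: the paper gives no written proof, calling the corollary ``a direct application'' of Corollary~\ref{METrational} together with the preceding observation that $(a^{j-1})_{j\in\N}$ has rational spectrum. Your filling-in of the details --- the explicit identification of $\sigma\bigl((a^{j-1})_{j\in\mathbb{N}}\bigr)$ as the roots of unity whose order is composed of primes dividing $a$, and the use of the group structure of the point spectrum of an ergodic system to reduce composite-order eigenvalues to the prime-power roots $e(m/p^n)$ excluded by hypothesis --- is precisely the argument needed, and it is sound.
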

Note that the hypothesis in Part~\eqref{item:totally} holds whenever the system $\XX$ is totally ergodic. 
 We use this to deduce an equidistribution result along $\IP\big((a^{j-1})_{j\in\mathbb{N}}\big)$.
\begin{corollary}[Equidistribution]
\label{cor:equidistribution}
Let $a\geq 2$ be an integer and let $\alpha\in \CS^1$ be either an irrational or be a root of unity whose order is coprime to $a$. Then for every continuous function $f\colon \CS^1\rightarrow \mathbb{C}$, we have \begin{equation}\label{equidistribution}\lim_{N\rightarrow\infty}\E_{n\in\IP_{\Phi_N\big((a^{j-1})_{j\in\mathbb{N}}\big)}} f(\alpha^n x) = \int_{\CS^1} f \d\mu
    \end{equation} for all $x\in \CS^1$.
\end{corollary}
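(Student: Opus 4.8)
The plan is to test the average on characters and to reduce the whole statement to the scalar products already computed in Proposition~\ref{avgtoprod}. By the Stone--Weierstrass theorem the characters $\chi_k(x)=x^k$, $k\in\Z$, span a uniformly dense subspace of $C(\CS^1)$, and each operator $f\mapsto\E_{n\in\IP_{\Phi_N}}f(\alpha^n\,\cdot\,)$ is a contraction for $\|\cdot\|_\infty$; hence it suffices to treat $f=\chi_k$. Since $\chi_k(\alpha^n x)=(\alpha^k)^n\chi_k(x)$, Proposition~\ref{avgtoprod} gives
\[
\E_{n\in\IP_{\Phi_N}}\chi_k(\alpha^n x)=\Bigl(\prod_{j\in\Phi_N}\frac{1+(\alpha^k)^{a^{j-1}}}{2}\Bigr)\chi_k(x).
\]
As $\int_{\CS^1}\chi_k\,\d\mu$ equals $1$ for $k=0$ and $0$ otherwise, and as $x$ enters only through the bounded factor $\chi_k(x)$, the corollary follows---uniformly in $x$---once I show that $\omega_\Phi(\alpha^k)=\lim_{N\to\infty}\prod_{j\in\Phi_N}\frac{1+(\alpha^k)^{a^{j-1}}}{2}=0$ for every $k\neq 0$ with $\alpha^k\neq 1$.

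The heart of the matter is this vanishing, which I obtain from the hypothesis on $\alpha$ together with the fact---verified in the discussion following Corollary~\ref{METrational}---that the generating sequence $(a^{j-1})_{j\in\N}$ has rational spectrum. If $\alpha$ is irrational, then $\alpha^k$ is irrational for every $k\neq 0$, so $\alpha^k\notin\sigma\bigl((a^{j-1})_{j\in\N}\bigr)\subseteq e^{2\pi i\Q}$ and $\omega_\Phi(\alpha^k)=0$ directly from Definition~\ref{spectrum}; in this case the only surviving mode is $k=0$, giving the stated limit $\int_{\CS^1}f\,\d\mu$. If instead $\alpha$ is a root of unity of order $q$ with $\gcd(q,a)=1$, then for each $k$ with $\alpha^k\neq 1$ the element $\alpha^k$ is a root of unity whose order $d>1$ divides $q$, so $\gcd(d,a)=1$; multiplication by $a$ is then invertible modulo $d$, and $(\alpha^k)^{a^{j-1}}$ runs through a fixed finite set of $d$-th roots of unity that avoids $1$. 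Consequently $\bigl|\tfrac{1+(\alpha^k)^{a^{j-1}}}{2}\bigr|\le c$ for some $c=c(\alpha,k)<1$ and all $j$, and because $\Phi$ is increasing the product collects arbitrarily many such factors, forcing $\omega_\Phi(\alpha^k)=0$. The coprimality hypothesis is exactly what guarantees that $(\alpha^k)^{a^{j-1}}$ never collapses back to $1$.

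The main obstacle is the product estimate itself: knowing that each factor has modulus strictly below $1$ is not enough, and one must extract a \emph{uniform} gap $c<1$ and use that an increasing F\o lner sequence accumulates arbitrarily many gap-producing indices $j$---this is precisely where the rational spectrum of $(a^{j-1})$ and, in the root-of-unity case, the coprimality $\gcd(q,a)=1$ are used. Summing the Fourier expansion then identifies the limit with the projection onto the modes fixed by $\alpha$: for irrational $\alpha$ this is $\int_{\CS^1}f\,\d\mu$, while for a root of unity the same computation leaves exactly the modes with $\alpha^k=1$, so the limit is the average of $f$ over the finite orbit $\{\alpha^i x\}_{i=0}^{q-1}$ (which coincides with $\int_{\CS^1}f\,\d\mu$ precisely when the orbit is dense). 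Finally, the passage from characters back to arbitrary continuous $f$ is routine, since uniform approximation by trigonometric polynomials is preserved by the contractions $f\mapsto\E_{n\in\IP_{\Phi_N}}f(\alpha^n\,\cdot\,)$, and the convergence obtained is uniform in $x\in\CS^1$.
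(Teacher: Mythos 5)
Your proposal follows the same route as the paper's own proof: reduce to characters via Stone--Weierstrass (using that the averaging operators are $\|\cdot\|_\infty$-contractions), compute the average at a character $x\mapsto x^\ell$ as $\omega_\Phi(\alpha^\ell)\cdot x^\ell$ via Proposition~\ref{avgtoprod}, and then argue that $\omega_\Phi(\alpha^\ell)$ vanishes. You are in fact more careful than the paper at the last step: the paper simply asserts $\omega_\Phi(\alpha^\ell)=0$ for every $\ell\neq 0$, leaning on the rational-spectrum discussion (which only covers irrational $\alpha^\ell$), whereas you supply the uniform-gap argument needed in the root-of-unity case: if $\alpha^\ell\neq 1$ has order $d>1$ with $\gcd(d,a)=1$, then $(\alpha^\ell)^{a^{j-1}}$ ranges over a finite set of nontrivial $d$-th roots of unity, so each factor $\bigl|\tfrac{1+(\alpha^\ell)^{a^{j-1}}}{2}\bigr|$ is bounded by some $c<1$, and an increasing F\o lner sequence accumulates arbitrarily many such factors.

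Precisely because of this care, your computation exposes a genuine problem in the root-of-unity case, and you should state it plainly rather than hedge. If $\alpha$ has order $q$ coprime to $a$ and $q\mid\ell$, then $\alpha^\ell=1$ and $\omega_\Phi(1)=1$, so that mode does \emph{not} vanish: for $f(x)=x^q$ the average equals $x^q$ for every $N$, which is not $\int_{\CS^1}f\d\mu=0$. Your own computation shows that the limit is the average of $f$ over the finite orbit $\{\alpha^i x : 0\leq i<q\}$; since a finite orbit is never dense in $\CS^1$, your parenthetical ``coincides with $\int_{\CS^1}f\d\mu$ precisely when the orbit is dense'' is vacuous, and the corollary as stated fails for roots of unity --- it is correct only for irrational $\alpha$, or with the conclusion replaced by the orbit average. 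The paper's proof misses this because it applies $\omega_\Phi(\alpha^\ell)=0$ indiscriminately to all $\ell\neq 0$. In short: your argument is sound and follows the paper's approach, and at the one point where it diverges, it is your version that is right.
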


\begin{proof}
By the Stone-Weierstrass theorem, any continuous function on a compact abelian group is a uniform limit of linear combinations of characters. Thus it suffices to prove~\eqref{equidistribution} when $f$ is a character. If $f=1$, this is immediate.  Otherwise, choose $0\not =\ell\in\mathbb{Z}$ such that $f(x)=x^\ell$.  It follows that (recall that $\omega_\Phi$ is defined in~\eqref{omega}) \begin{align*}
\lim_{N\rightarrow\infty}\sEIP f(\alpha^nx) &=\left( \lim_{N\rightarrow\infty} \sEIP (\alpha^\ell)^n \right) f(x) \\
&= \omega_{\Phi}(\alpha^\ell)\cdot f(x) =0, 
\end{align*}
and since $\int_{\CS^1} f(x) \d\mu = 0$, the statement follows.
\end{proof}
\subsection{An example of non-convergence along an $\IP$}
\label{sec:diverge}
To produce a sequence such that convergence fails, fix some irrational $\alpha\in \CS^1$ and the standard F\o lner sequence $\Phi = (\Phi_N)_{N\in\N}$ with $\Phi_N = [1,N]$ for all $N\in\N$.  We use that the orbit of $\alpha$ is dense to produce a sequence $(n_j)_{j\in\mathbb{N}}$ such that 
\begin{equation}\label{argument}\lim_{N\rightarrow\infty}\sum_{j=1}^N \arctan\left(\frac{\sin(\arg(\alpha^{n_j}))}{1+\cos(\arg(\alpha^{n_j}))}\right)
\end{equation}
diverges while
\begin{equation}\label{norm}\lim_{N\rightarrow\infty}\prod_{j=1}^N \frac{1+\cos(\arg(\alpha^{n_j}))}{2}
\end{equation}
converges to a nonzero value.  
It then follows from Theorem~\ref{criteria} the ergodic average along $\IP$ generated by $(n_j)_{j\in\N}$  diverges.

To do so, set $z_j = \frac{1+\alpha^{n_j}}{2}$ and write $z_j = r_je(i\cdot\theta_j)$ for some $r_j > 0$ and $\theta_j\in (-\frac{\pi}{2},\frac{\pi}{2})$.  For each $j\in\N$, choose $n_j$ such that $\frac{1}{j}<\theta_j<\frac{1}{j} + \frac{1}{j^2}.$ Comparing with the harmonic series we see that $\sum_{j=1}^\infty \theta_j$ diverges, and it follows that ~\eqref{argument} diverges. 
Considering~\eqref{norm}, we have that  $r_j = \sqrt{\cos(\theta_j)}$
 and taking the absolute value of the product we obtain $\lim_{N\rightarrow\infty}\prod_{j=1}^N \frac{1+\cos(\arg(\alpha^{n_j}))}{2}=\prod_{j=1}^\infty r_j$. Combining these facts, we have that 
$$\lim_{N\rightarrow\infty} \left(\prod_{j=1}^N r_j\right)^2 = \exp\big(\lim_{N\rightarrow\infty} \sum_{j=0}^N \ln(\cos(\theta_j))\big).$$
Approximating via the Taylor expansion $\ln\cos(\theta_j) = O(\theta_j^2) = O(1/j^2)$ and comparing to the convergent series $\sum_{j=0}^N \frac{1}{j^2}$, the limit $\lim_{N\rightarrow\infty} \sum_{j=0}^N \ln(\cos(\theta_j))$ exists. It follows that~\eqref{norm} is nonzero. 

\section{Characteristic factors for multiple ergodic averages along an $\IP$}
\label{sec:multiple}
\subsection{Preliminaries on characteristic factors}
\label{sec:char-factor}

Characteristic factors were implicit in the work of Furstenberg~\cite{furstenberg1977ergodic}, and were given this name by  Furstenberg and Weiss~\cite{furstenberg1996mean} in their study of some double averages. They have been used to prove the convergence of multiple ergodic averages~\cite{host2005nonconventional, ziegler2007universal}, and though there are other methods to prove the convergence (such as~\cite{walsh2012norm}), characteristic factors give more information that can be used to derive limit formulas and combinatorial corollaries. We adapt these techniques for $\IP$ averages, and start by recalling the precise definition. 
\begin{definition}[Characteristic factors]
    Let $\XX=(X,\mathcal{B},\mu,T)$, let $(A_N)_{N\in\mathbb{N}}\subseteq\mathbb{N}$ be finite subsets, and let $k\geq 1$. A factor $\YY=(Y,\CY,\nu,S)$ of $\XX$ is {\em characteristic for the average}
    \begin{equation}
        \label{eq:mult}
    \E_{n\in A_N} T^n f_1\cdot\ldots \cdot T^{kn}f_k
        \end{equation}
 if the difference between this average and the average 
    $$\E_{n\in A_N} T^n \E(f_1\mid\CY)\cdot\ldots \cdot T^{kn}\E(f_k\mid \CY)$$
    where each function is replaced by its expectation on $\YY$ 
    converges to $0$ in $L^2(\mu)$ 
    for all $f_1,\dots,f_k\in L^\infty(\mu)$. 
\end{definition}
Equivalently, the factor $\YY$ is characteristic for~\eqref{eq:mult} if this average converges to $0$ in $L^2(\mu)$ whenever there is some $i\in\{1, \dots, k\}$ such that  expectation $\E(f_i\mid \CY) = 0$. 

In~\cite{host2005nonconventional} (and later in~\cite{ziegler2007universal}), characteristic factors for the average~\eqref{eq:mult} with $(A_N)_{N\in\N}$ being a F\o lner sequence in $\N$ are described, defining a factor $Z_{k-1} = Z_{k-1}(X)$ (known as the {\em Host-Kra factor}) of the system $\XX$ for each $k\geq 1$. Our next result shows that the same factors $Z_{k-1}$ are  characteristic along  $\IP$s. 
\begin{theorem}[Characteristic factors for multiple ergodic averages along $\IP$s.]
\label{HKcharacteristic}
 Let $\XX=(X,\mathcal{B},\mu,T)$ be an invertible measure preserving system, let $(n_j)_{j\in\mathbb{N}}$ be a sequence with rational spectrum, let $\ell_1,\dots,\ell_k$ be distinct and let $k\geq 2$. For any increasing F\o lner sequence $\Phi=(\Phi_N)_{N\in\mathbb{N}}$,  the factor $Z_{k-1}(\XX)$ is characteristic for the average
 $\EIP\prod_{i=1}^k T^{\ell_in}f_i.$
\end{theorem}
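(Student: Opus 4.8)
The plan is to reduce the $\IP$-average statement to the classical Host-Kra characteristic factor result by exploiting the multiplicative product structure that Proposition~\ref{avgtoprod} exposes, together with the rational-spectrum hypothesis which forces the relevant limit behavior to be governed by finitely many rational eigenvalues. The key technical engine should be an $\IP$ version of the van der Corput inequality (Lemma~\ref{vdc}) combined with the $\IP$ Gowers-Host-Kra seminorms (Definition~\ref{def:seminorms}). Recall that to show $Z_{k-1}(\XX)$ is characteristic, it suffices to prove that the average $\EIP\prod_{i=1}^k T^{\ell_i n}f_i$ converges to $0$ in $L^2(\mu)$ whenever $\E(f_i\mid Z_{k-1})=0$ for some $i$, equivalently whenever $\gnorm{f_i}_{k}=0$ for that $i$, where $\gnorm{\cdot}_{k}$ denotes the degree-$k$ seminorm. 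So the target estimate is a bound of the shape
\begin{equation}
\label{eq:target-bound}
\limsup_{N\to\infty}\Bigl\|\EIP\prod_{i=1}^k T^{\ell_i n}f_i\Bigr\|_{L^2(\mu)} \;\leq\; C\cdot \min_{1\leq i\leq k}\gnorm{f_i}_{s}
\end{equation}
for an appropriate degree $s$ controlled by $k$.

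First I would set up the induction on $k$ exactly as in the linear case. The base case $k=1$ is a single $\IP$ average of $T^{\ell_1 n}f_1$, which is handled by the spectral results of Section~\ref{sec:conv} (Theorem~\ref{METKrat} and Corollary~\ref{METrational}): because the sequence has rational spectrum, the average projects onto the rational eigenspaces, so $\E(f_1\mid\K_\rat)=0$ forces the average to vanish, and $\K_\rat$ is contained in $Z_{k-1}$ for $k\geq 2$. For the inductive step I would apply the $\IP$ van der Corput Lemma~\ref{vdc} to the average $\EIP\prod_{i=1}^k T^{\ell_i n}f_i$. This replaces the single-index average by a double average over $\IP$ shifts $n$ and $n+h$ (or, in the $\IP$ setting, over sums that differ by an $\IP$ generator), producing expressions of the form $\prod_{i=1}^k T^{\ell_i n}\bigl(f_i\cdot \overline{T^{\ell_i h}f_i}\bigr)$ after reindexing. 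The crucial point is that the $\IP$ structure means the auxiliary differencing parameter $h$ ranges over the same $\IP$, so that after applying van der Corput one obtains an average of the same multilinear shape but with one fewer ``free'' function, allowing the induction hypothesis to apply to the differenced functions and to convert the resulting bound into the seminorm $\gnorm{f_i}_{s}$ via the standard inductive definition of the $\IP$ seminorms.

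The main obstacle, and the step I would spend the most care on, is verifying that the van der Corput differencing interacts correctly with the $\IP$-F\o lner averaging when the generating sequence only has rational spectrum rather than, say, generating all of $\N$. In the classical setting the differencing parameter $h$ ranges over an interval and one controls error terms using F\o lner asymptotic invariance; here the differenced average is itself an $\IP$ average, and one must ensure that the off-diagonal and boundary contributions vanish in the limit. I expect this to require the multiplicative product formula of Proposition~\ref{avgtoprod} to factor the averaging over generators, together with the rational spectrum hypothesis to guarantee that the ``bad'' directions (irrational eigenvalues) contribute a factor tending to zero (as in the norm-collapse computation in the proof of Theorem~\ref{METKrat}), so that only the rational part survives and matches the Host-Kra seminorm structure. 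Once the seminorm bound~\eqref{eq:target-bound} is established, the characteristic factor conclusion follows because $\gnorm{f_i}_{s}=0$ is equivalent to $\E(f_i\mid Z_{s-1})=0$, and a dominating-degree argument identifies the correct characteristic factor as $Z_{k-1}(\XX)$.
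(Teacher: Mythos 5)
Your inductive scheme is, in substance, the paper's Proposition~\ref{seminormcontrol}: the $\IP$ van der Corput lemma (Lemma~\ref{vdc}) plus Cauchy--Schwarz and induction on $k$ bounds the average by $\min_i \|f_i\|_{\U^{k}_{\IP}(\XX,T)}$. But note what this induction actually produces: since the differencing parameters $m_1,m_2$ range over $\IP_{\Phi_M}$, the bound closes on the \emph{$\IP$} seminorms of Definition~\ref{def:seminorms}, not on the classical Host--Kra seminorms. (Incidentally, the step you flag as the main obstacle is not where the difficulty lies: the paper's Lemma~\ref{vdc} needs only the bijection $\IP_{\Phi_M}\times \IP_{\Phi_N^M}\to \IP_{\Phi_N}$ -- this is what the multiset convention is for -- and no use of Proposition~\ref{avgtoprod} or of rational spectrum is made in the van der Corput step itself.)

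The genuine gap is your concluding step. You assert that ``$\gnorm{f_i}_{s}=0$ is equivalent to $\E(f_i\mid Z_{s-1})=0$,'' but that equivalence is a fact about the \emph{classical} seminorms, while the quantity your induction controls is $\|f_i\|_{\U^{k}_{\IP}(\XX,T)}$. What you actually need is the implication $\E(f_j\mid Z_{k-1}(\XX))=0 \Rightarrow \|f_j\|_{\U^{k}_{\IP}(\XX,T)}=0$, and this is not a formal consequence of the definitions: the $\IP$ seminorms are built over the rational Kronecker factor $\mathcal{K}_{\rat}$ rather than the invariant factor. Already at level one, $\|f\|_{\U^1_{\IP}(\XX,T)}$ is controlled by $\|\E(f\mid \mathcal{K}_{\rat}(\XX))\|_{L^2(\mu)}$ and not by $\|\E(f\mid \mathcal{I})\|_{L^2(\mu)}$ (take $Tf=-f$ and generators $n_j=2^j$: the classical $U^1$ seminorm vanishes but the $\IP$ average is $f$), which is precisely why the theorem requires $k\geq 2$. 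The paper closes this gap with machinery that has no counterpart in your proposal: the rational $\IP$ cubic measures $\tilde{\mu}^{[k]}$ (Definition~\ref{def:cubic-measure}), Proposition~\ref{measurecontrol} bounding $\|f\|_{\U^{k}_{\IP}(\XX,T)}^{2^k}$ by the $\tilde{\mu}^{[k]}$-integral, and crucially Lemma~\ref{abscont}, whose proof identifies $\mathcal{K}_{\rat} = \bigvee_{\ell}\mathcal{I}_{\ell!}$, writes the $\tilde{\mu}^{[k]}$-integral as a limit of Host--Kra integrals for the powers $T^{\ell!}$, and invokes Leibman's theorem to conclude that $\|f_j\|_{\U^{k}(\XX,T)}=0$ kills all of these integrals. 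Without this bridge between the classical factor $Z_{k-1}(\XX)$ and the rational/$\IP$ cubic structure, your argument does not reach the stated conclusion.
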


When passing to a subsequence, there is no reason, apriori, that the factors $Z_k$ that are characteristic for the multiple average~\eqref{eq:mult} taken along a F\o lner in $\N$ are also characteristic for the multiple ergodic averages along  the subsequence.   In particular, for an $\IP$, this fails for even the simplest case $k=1$ that corresponds to the mean ergodic theorem, explaining the assumption that $k\geq 2$.  To prove Theorem~\ref{HKcharacteristic} in this case, we develop $\IP$ versions of the tools used in the proof in~\cite{host2005nonconventional}, including $\IP$ versions of the cubic systems, the Gowers-Host-Kra seminorms, and the associated factors.  A posteriori, we show that for $k\geq 2$, the factors agree with the factors $Z_k$.

\subsection{An $\IP$-version of the van der Corput lemma.}

We start with an $\IP$ version of a standard tool for identifying characteristic factors for multiple ergodic averages, the van der Corput Lemma.  
For a F\o lner sequence $\Phi=(\Phi_N)_{N\in\mathbb{N}}$ and fixed $M\in\mathbb{N}$, define the F\o lner sequence $\Phi^M=(\Phi_N^M)_{N\in\mathbb{N}}$ by setting $\Phi_N^M = \Phi_N\backslash \Phi_M$. It follows immediately from the definitions that $\Phi^M$ is a F\o lner sequence for every $M\in\mathbb{N}$ and that  if $\Phi$ is an increasing  F\o lner sequence then so is $\Phi^M$. 
\begin{lemma}[van der Corput lemma for increasing $\IP$-F\o lner sequences]\label{vdc}
     Let $\mathcal{H}$ be a Hilbert space with norm $\|\cdot\|$ and inner product $\langle\cdot,\cdot\rangle$, and let $(x_n)_{n\in\mathbb{N}}$ be a sequence in $\mathcal{H}$. For a sequence  $(n_j)_{j\in\mathbb{N}}$ and increasing F\o lner sequence $\Phi=(\Phi_N)_{N\in\mathbb{N}}$, we have 
    \begin{multline*}
        \limsup_{N\rightarrow \infty} \|\E_{n\in \IP_{\Phi_N}\bigl((n_j)_{j\in\mathbb{N}}\bigr)} x_{n}\|^2 \\
        \leq  \limsup_{M\rightarrow\infty} \E_{m_1,m_2\in \IP_{\Phi_M}\bigl((n_j)_{j\in\mathbb{N}}\bigr)} \limsup_{N\rightarrow\infty} \E_{n\in \IP_{\Phi^M_N}\bigl((n_j)_{j\in\mathbb{N}}\bigr)}\left<x_{n+m_1},x_{n+m_2}\right>.
        \end{multline*}
\end{lemma}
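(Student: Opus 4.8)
The plan is to reduce the statement to a single clean algebraic identity that replaces the approximate shift-invariance used in the classical van der Corput argument with an \emph{exact} product decomposition of $\IP$-F\o lner sets. The starting observation is that, since $\Phi$ is increasing, we have $\Phi_M\subseteq\Phi_N$ for all $M\le N$, so $\Phi_N=\Phi_M\sqcup\Phi^M_N$ is a disjoint union. Every subset $S\subseteq\Phi_N$ then decomposes uniquely as $S=S_1\sqcup S_2$ with $S_1\subseteq\Phi_M$ and $S_2\subseteq\Phi^M_N$, and the associated $\IP$-element $\sum_{i\in S}n_i$ equals $m+n$ with $m=\sum_{i\in S_1}n_i\in\IP_{\Phi_M}$ and $n=\sum_{i\in S_2}n_i\in\IP_{\Phi^M_N}$. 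Because we regard these $\IP$s as multisets (indexed by subsets of the generators, not by the values of the sums), this correspondence is a bijection, and hence for every $M\le N$,
\[
\E_{n\in \IP_{\Phi_N}} x_n=\E_{n\in \IP_{\Phi^M_N}}\E_{m\in \IP_{\Phi_M}} x_{n+m}.
\]
I would flag this as the main point of the argument: it is precisely here that the multiset convention (and the inclusion of the empty sum $0$) is essential, since otherwise coincidences among the values of distinct subset-sums would spoil the uniform counting and the factorization would only hold approximately.

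With this identity in hand, the remainder is a standard Hilbert-space computation. Writing $u_n:=\E_{m\in \IP_{\Phi_M}} x_{n+m}$, the identity says that $\E_{n\in \IP_{\Phi_N}}x_n=\E_{n\in \IP_{\Phi^M_N}} u_n$ is an average of the finitely many vectors $u_n$ over the nonempty, uniformly weighted multiset $\IP_{\Phi^M_N}$. Applying the Cauchy--Schwarz inequality for finite averages (equivalently, convexity of $v\mapsto\|v\|^2$) gives $\|\E_{n} u_n\|^2\le \E_{n}\|u_n\|^2$, and expanding $\|u_n\|^2=\langle u_n,u_n\rangle$ by sesquilinearity of the inner product yields $\|u_n\|^2=\E_{m_1,m_2\in \IP_{\Phi_M}}\langle x_{n+m_1},x_{n+m_2}\rangle$. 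Combining these gives, for every $M\le N$,
\[
\Big\|\E_{n\in \IP_{\Phi_N}} x_n\Big\|^2\le \E_{m_1,m_2\in \IP_{\Phi_M}}\ \E_{n\in \IP_{\Phi^M_N}}\langle x_{n+m_1},x_{n+m_2}\rangle.
\]

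Finally I would pass to the limits in the correct order. Fixing $M$ and taking $\limsup_{N\to\infty}$ of the displayed inequality, I use subadditivity of $\limsup$ over the fixed finite index set $\IP_{\Phi_M}\times\IP_{\Phi_M}$ to move the limsup inside the double average over $m_1,m_2$, obtaining
\[
\limsup_{N\to\infty}\Big\|\E_{n\in \IP_{\Phi_N}} x_n\Big\|^2\le \E_{m_1,m_2\in \IP_{\Phi_M}}\ \limsup_{N\to\infty}\ \E_{n\in \IP_{\Phi^M_N}}\langle x_{n+m_1},x_{n+m_2}\rangle.
\]
Since the left-hand side does not depend on $M$, taking $\limsup_{M\to\infty}$ of the right-hand side yields exactly the claimed bound. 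The only bookkeeping points are that the inner averages run over the complement indices $\Phi^M_N$ (which is what makes the factorization exact), and that for each fixed $M$ only $N\ge M$ contribute to $\limsup_N$, so the decomposition $\Phi_N=\Phi_M\sqcup\Phi^M_N$ is valid throughout.
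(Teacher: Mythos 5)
Your proof is correct and follows essentially the same route as the paper's: the exact factorization $\E_{n\in \IP_{\Phi_N}} x_n=\E_{n\in \IP_{\Phi^M_N}}\E_{m\in \IP_{\Phi_M}} x_{n+m}$ (the paper phrases this as the bijection $\beta(n,m)=n+m$ from $\IP_{\Phi_M}\times\IP_{\Phi^M_N}$ to $\IP_{\Phi_N}$), followed by Jensen/Cauchy--Schwarz, expansion of the inner product, and moving the limsup inside the finite average over $m_1,m_2$ before letting $M\to\infty$. Your explicit remark that the multiset convention is what makes the factorization exact is a point the paper also emphasizes (just before Proposition~\ref{avgtoprod}), so there is no substantive difference.
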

\begin{proof}
Setting  $\beta(n,m)=n+m$, we have a bijection 
\[
\beta\colon \IP_{\Phi_M}\bigl((n_j)_{j\in\mathbb{N}}\bigr)\times \IP_{\Phi_N^M}\bigl((n_j)_{j\in\mathbb{N}}\bigr)\rightarrow \IP_{\Phi_N}\bigl((n_j)_{j\in\mathbb{N}}\bigr)
\]
for all $N,M\in\N$.  It follows that for all $M\in\N$, we have 
$$\sEIP x_n = \E_{n\in \IP_{\Phi^M_N}} \E_{m\in \IP_{\Phi_M}} x_{n+m}.$$
    By Jensen's inequality, we have that 
    \begin{align*}\left\|\E_{n\in \IP_{\Phi^M_N}} \E_{m\in \IP_{\Phi_M}} x_{n+m}\right\|^2 &\leq \E_{n\in \IP_{\Phi^M_N}}\left\|\E_{m\in \IP_{\Phi_M}}x_{n+m}\right\|^2  \\&=\E_{n\in \IP_{\Phi_N^M}}\E_{m_1,m_2\in \IP_{\Phi_M}}\left<x_{n+m_1,n+m_2}\right>.
    \end{align*} 
    Interchanging the order of summation and taking $N\rightarrow\infty$, we obtain that for all $M\in\mathbb{N}$, 
\[
\limsup_{N\rightarrow\infty}\|\sEIP x_n\|^2 \leq \E_{m_1,m_2\in \IP_{\Phi_M}} \limsup_{N\rightarrow\infty} \E_{n\in\IP_{\Phi_N^M}}\left<x_{n+m_1},x_{n+m_2}\right>.
\]
    The statement follows as $M\to \infty$. 
\end{proof}
 
 Combining this lemma and induction, we have the generalization of Corollary~\ref{cor:equidistribution} to polynomial sequences.
    \begin{corollary}[Equidistribution along polynomials]
Let $a\geq 2$ be an integer and let $\alpha\in \CS^1$ be either an irrational or be a root of unity whose order is coprime to $a$. Then for every $k\geq 1$, integer valued (non-constant) polynomial $p\colon \N\rightarrow \N$ of degree $k$, and continuous function $f\colon \CS^1\rightarrow \mathbb{C}$, we have \begin{align*}\lim_{N\rightarrow\infty}\E_{n\in\IP_{\Phi_N((a^{j-1})_{j\in\mathbb{N}})}} f(\alpha^{p(n)} x) = \int_{\CS^1} f \d\mu
    \end{align*} for all $x\in \CS^1$.
\end{corollary}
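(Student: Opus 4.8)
The plan is to reduce to a single nontrivial character and then induct on the degree $k$ of $p$, lowering the degree by one at each step with the van der Corput Lemma~\ref{vdc}. First I would apply the Stone--Weierstrass theorem, exactly as in the proof of Corollary~\ref{cor:equidistribution}, to reduce to the case that $f$ is a character $f(x)=x^\ell$. The case $\ell=0$ is trivial, so I assume $\ell\neq 0$ and set $\beta=\alpha^\ell$. Since $\int_{\CS^1}f\d\mu=0$ and $f(\alpha^{p(n)}x)=\beta^{p(n)}\cdot x^\ell$, it suffices to show that
$$\lim_{N\rightarrow\infty}\E_{n\in\IP_{\Phi_N}}\beta^{p(n)}=0.$$
Here $\beta$ is again either irrational or a root of unity whose order is coprime to $a$, and this is the only feature of $\beta$ that will be used. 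I would prove this vanishing by induction on the degree $k$, phrasing the inductive statement for all such $\beta$, all integer-valued non-constant polynomials of degree at most $k$, and all increasing F\o lner sequences (so that it can later be reapplied to $\Phi^M$).

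The base case $k=1$ is Corollary~\ref{cor:equidistribution}: writing $p(n)=cn+d$ with $c\neq 0$, Proposition~\ref{avgtoprod} rewrites the average as $\beta^d\prod_{j\in\Phi_N}\tfrac{1+(\beta^c)^{a^{j-1}}}{2}$, which tends to $\omega_{\Phi}(\beta^c)=0$ because $(\beta^c)^{a^{j-1}}$ stays bounded away from $1$ along an increasing F\o lner sequence. This last point is precisely the base-$a$ digit-shift argument already used to show that $(a^{j-1})_{j\in\mathbb{N}}$ has rational spectrum.

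For the inductive step, suppose the claim holds in all degrees below $k\geq 2$ and let $p$ have degree $k$ with leading coefficient $c_k$. Applying Lemma~\ref{vdc} with $\mathcal{H}=\mathbb{C}$ and $x_n=\beta^{p(n)}$, the inner products become $\langle x_{n+m_1},x_{n+m_2}\rangle=\beta^{\,p(n+m_1)-p(n+m_2)}$. For fixed $m_1\neq m_2$ the exponent $q_{m_1,m_2}(n)=p(n+m_1)-p(n+m_2)$ is an integer-valued polynomial of degree exactly $k-1\geq 1$, since the degree-$k$ terms cancel while the coefficient of $n^{k-1}$ equals $c_k\,k\,(m_1-m_2)\neq 0$. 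As $\Phi^M$ is again an increasing F\o lner sequence, the inductive hypothesis gives $\lim_{N\rightarrow\infty}\E_{n\in\IP_{\Phi_N^M}}\beta^{q_{m_1,m_2}(n)}=0$. The diagonal pairs $m_1=m_2$ contribute $1$, but they form a fraction $|\IP_{\Phi_M}|^{-1}=2^{-|\Phi_M|}$ of all pairs, which tends to $0$ as $M\to\infty$ because $\Phi$ is increasing. Hence the right-hand side of Lemma~\ref{vdc} vanishes, yielding $\limsup_{N\rightarrow\infty}|\E_{n\in\IP_{\Phi_N}}\beta^{p(n)}|^2=0$ and closing the induction.

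I expect the main obstacle to be the base case together with keeping the difference polynomials non-degenerate: one must verify that each application of van der Corput genuinely drops the degree by one and produces a polynomial to which the inductive hypothesis applies, and -- in the root-of-unity range of $\beta$ -- that the relevant leading coefficient is handled correctly relative to the order of $\beta$, so that the reduced exponential sum really does vanish. Once this is in place, the vanishing of the diagonal contribution is the routine observation that $|\IP_{\Phi_M}|=2^{|\Phi_M|}\to\infty$ along an increasing F\o lner sequence.
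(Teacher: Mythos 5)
Your overall strategy --- Stone--Weierstrass reduction to a single character followed by induction on $\deg p$ via the van der Corput Lemma~\ref{vdc} --- is exactly the proof the paper has in mind (its entire justification is the sentence ``Combining this lemma and induction''), and on the irrational range your argument is complete and correct: nonzero powers of an irrational $\alpha$ remain irrational, for $m_1\neq m_2$ the difference polynomial $p(n+m_1)-p(n+m_2)$ has degree exactly $k-1$ with leading coefficient $c_k k(m_1-m_2)\neq 0$, your inductive hypothesis is correctly quantified over all increasing F\o lner sequences so that it may be applied to $\Phi^M$, and the diagonal pairs contribute only a fraction $2^{-|\Phi_M|}\to 0$ (for $n_j=a^{j-1}$ all finite sums are distinct, so equality of values coincides with equality in the multiset).

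The obstacle you flag in the root-of-unity range, however, is not a loose end to be checked but an irreparable failure, both of your inductive claim and of the corollary as stated. If $\beta\neq 1$ is a root of unity of order $q$ coprime to $a$, nothing prevents $q$ from dividing the coefficients your induction needs to be nonzero: already in the base case, $p(n)=qn$ gives $\E_{n\in \IP_{\Phi_N}}\beta^{p(n)}=1$ for every $N$, and in the inductive step the new leading coefficient $c_k k(m_1-m_2)$ can be divisible by $q$ even when $c_k$ is not. So the claim ``$\lim_{N\rightarrow\infty}\E_{n\in\IP_{\Phi_N}}\beta^{p(n)}=0$ for all such $\beta$ and all non-constant integer polynomials'' is false; worse, the corollary itself fails for every root of unity $\alpha\neq 1$, since the points $\alpha^{p(n)}x$ lie in the finite set $\{x,\alpha x,\dots,\alpha^{q-1}x\}$ and so the averages cannot converge to $\int_{\CS^1}f\d\mu$ for all continuous $f$. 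Concretely, with $a=10$ and $\alpha$ of order $3$: for $p(n)=3n$ and $f(x)=x$ the average is identically $x$; and for an arbitrary non-constant $p$, taking $f(x)=x^{3}$ gives $f(\alpha^{p(n)}x)=x^{3}$ identically. This defect is inherited from the paper: the proof of Corollary~\ref{cor:equidistribution} asserts $\omega_{\Phi}(\alpha^\ell)=0$ for all $\ell\neq 0$ without excluding the case $\alpha^\ell=1$, which occurs exactly when the order of $\alpha$ divides $\ell$. In short, your proof is right, and matches the paper, exactly where the statement is true (irrational $\alpha$); on the root-of-unity range no argument can close the gap you identified, because the conclusion is false there --- what survives is only equidistribution among the $q$ points of the finite orbit, i.e.\ vanishing of $\lim_{N\rightarrow\infty}\E_{n\in\IP_{\Phi_N}}\beta^{p(n)}$ for those characters and polynomials whose induced exponents are not divisible by $q$.
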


To control  multiple ergodic averages along an $\IP$ associated to a  sequence with rational spectrum, we introduce the relevant  version of the uniformity seminorms, defined inductively analogous to the construction in~\cite{host2005nonconventional}. 

\begin{definition}[The (rational) $\IP$ Gowers-Host-Kra seminorms]
\label{def:seminorms}
    Let $\XX=(X,\mathcal{B},\mu,T)$ be an invertible measure preserving system and  $f\in L^\infty(\mu)$.  For $k=1$, define the {\em first seminorm $\|\cdot\|_{\U^1_{\IP}(\XX,T)}$} by setting 
    $$\|f\|_{\U^1_{\IP}(\XX,T)} := \sup_{(n_j)_{j\in\mathbb{N}}}\sup_{\Phi}\lim_{N\rightarrow\infty}\left\| \EIP T^n f\right\|_{L^2(\mu),}$$ where the supremums are taken over all sequences $(n_j)_{j\in\mathbb{N}}$ with rational spectrum and all increasing $\IP$-F\o lner sequences $\Phi$. For $k\geq 1$, define the {\em $(k+1)$\textsuperscript{st}  seminorm $\|\cdot\|_{\U^{k+1}_{\IP}(\XX,T)}$} by setting  
  $$\|f\|_{\U^{k+1}_{\IP}(\XX,T)} := \sup_{(n_j)_{j\in\mathbb{N}}}\sup_{\Phi}\lim_{N\rightarrow\infty} \left(\E_{m_1,m_2\in \IP_{\Phi_N}\bigl((n_j)_{j\in\mathbb{N}}\bigr)} \left\|T^{m_1}f\cdot T^{m_2}\overline{f}\right\|_{\U^k_{\IP}(\XX,T)}^{2^{k}}\right)^{1/2^{k+1}},$$
  again taking the supremums  over all sequences $(n_j)_{j\in\mathbb{N}}$ with rational spectrum and all increasing $\IP$-F\o lner sequences $\Phi$.
\end{definition}

As in the case of a F\o lner sequence in $\N$, these seminorms are characterized in terms of certain measures,  inductively constructed  analogously to how the cubic measures are constructed in~\cite{host2005nonconventional}.  Using notation of~\cite{host2005nonconventional}, set $V_k=\{0,1\}^k$ to be the $k$-dimensional cube. We write a vertex $\epsilon\in V_k$ without commas and set  $|\epsilon|=\sum_{i=1}^k \epsilon_i$.  If  $\epsilon\in V_k$ and $\eta\in V_\ell$, we  concatenate them to obtain an element $\epsilon\eta\in V_{k+\ell}$.  Let $\mathcal{C}\colon \mathbb{C}\rightarrow\mathbb{C}$ denote  complex conjugation.  
In~\cite{host2005nonconventional}, joinings are built inductively over the invariant factor, and in our inductive construction the invariant factor is replaced by the rational Kronecker factor.  

\begin{definition}[The (rational) $\IP$ cubic measures]
\label{def:cubic-measure}
    Let $\XX=(X,\mathcal{B},\mu,T)$ be an invertible measure preserving system, set  $X^{[0]}=X$, and set $\tilde{\mu}^{[0]}=\mu$. For $k\geq 1$,  identifying $X^{[k+1]}$ with $X^{[k]}\times X^{[k]}$, we define the {\em cubic measure }  $\tilde{\mu}^{[k+1]}$ on $\XX^{[k+1]}$ to be the relatively independent joining of $\XX^{[k]}$ with itself over the factor $\mathcal{K}_{\rat}(\XX^{[k]})$.  
\end{definition}
This means that if $(f_\epsilon)_{\epsilon\in V_{k+1}}$ are $2^{k+1}$ bounded functions on $X$, the measure $\tilde{\mu}^{[k+1]}$ is defined by 
\[
\int_{X^{[k+1]}}\bigotimes_{\epsilon \in V^{k+1}}f_\epsilon\d\tilde{\mu}^{[k+1]} = 
\int_{X^{[k]}}\E\Big(\bigotimes_{\eta\in V_k} f_{\eta 0}\mid \mathcal{K}_{\rat}
\Big)\cdot \E\Big(\bigotimes_{\eta\in V_k} f_{\eta 1}\mid \mathcal{K}_{\rat}
\Big)\d\tilde{\mu}^{[k]}
.
\]

We check that rational $\IP$-cubic measures control  $\IP$-semirnorms.
\begin{proposition}
\label{measurecontrol}
    Let $\XX=(X,\mathcal{B},\mu,T)$ be an invertible measure preserving system and let $k\geq 1$. For every $f\in L^\infty(\mu)$, we have

    $$\|f\|_{\U^{k}_{\IP(\XX,T)}}^{2^{k}} \leq \int_{X^{[k]}} \bigotimes_{\epsilon\in V_k} \mathcal{C}^{|\epsilon|} f \d\tilde{\mu}^{[k]}.$$
\end{proposition}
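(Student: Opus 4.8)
The plan is to induct on $k$. Throughout write $F=F_k=\bigotimes_{\eta\in V_k}\mathcal{C}^{|\eta|}f\in L^\infty(\tilde\mu^{[k]})$, and let $T^{[k]}=T\times\cdots\times T$ be the diagonal transformation on $X^{[k]}$. I first record that $T^{[k]}$ preserves $\tilde\mu^{[k]}$: this follows inductively from Definition~\ref{def:cubic-measure}, since $\mathcal{K}_{\rat}(\XX^{[k]})$ is a $T^{[k]}$-invariant factor, so its relatively independent self-joining is invariant under $T^{[k]}\times T^{[k]}=T^{[k+1]}$. Hence $\XX^{[k]}=(X^{[k]},\tilde\mu^{[k]},T^{[k]})$ is a measure preserving system to which Corollary~\ref{METrational} applies for any rational-spectrum sequence.

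For the base case $k=1$, unwinding the definition gives $\|f\|_{\U^1_{\IP}(\XX,T)}^2=\sup_{(n_j),\Phi}\bigl\|\lim_N\E_{n\in\IP_{\Phi_N}}T^n f\bigr\|_{L^2(\mu)}^2$. By Corollary~\ref{METrational} the inner limit equals $\sum_{t\in\sigma((n_j))}\omega_\Phi(t)P_tf$; since the $P_tf$ are pairwise orthogonal, $|\omega_\Phi(t)|\le1$, and $\sigma((n_j))\subseteq e^{2\pi i\mathbb{Q}}$, its norm squared is at most $\sum_{t\in e^{2\pi i\mathbb{Q}}}\|P_tf\|^2=\|\E(f\mid\mathcal{K}_{\rat})\|_{L^2}^2$, the last equality because $\mathcal{K}_{\rat}$ is the orthogonal sum of the rational eigenspaces. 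As the level-$1$ cubic formula reads $\int_{X^{[1]}}f\otimes\bar f\,\d\tilde\mu^{[1]}=\|\E(f\mid\mathcal{K}_{\rat})\|^2$, this is exactly the claimed bound.

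For the inductive step I apply the hypothesis to $g=T^{m_1}f\cdot T^{m_2}\bar f$ for each fixed pair $m_1,m_2$ and then average. The algebraic heart is the vertex identity $\mathcal{C}^{|\epsilon|}(T^{m_1}f\cdot T^{m_2}\bar f)=T^{m_1}(\mathcal{C}^{|\epsilon|}f)\cdot T^{m_2}(\overline{\mathcal{C}^{|\epsilon|}f})$, which factors the cube function as $\bigotimes_{\epsilon}\mathcal{C}^{|\epsilon|}g=(T^{[k]})^{m_1}F\cdot\overline{(T^{[k]})^{m_2}F}$, so that $\int_{X^{[k]}}\bigotimes_\epsilon\mathcal{C}^{|\epsilon|}g\,\d\tilde\mu^{[k]}=\langle(T^{[k]})^{m_1}F,(T^{[k]})^{m_2}F\rangle$. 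Averaging over $m_1,m_2\in\IP_{\Phi_N}$ collapses the double average into a single norm,
$$\E_{m_1,m_2\in\IP_{\Phi_N}}\bigl\langle(T^{[k]})^{m_1}F,(T^{[k]})^{m_2}F\bigr\rangle=\Bigl\|\E_{m\in\IP_{\Phi_N}}(T^{[k]})^m F\Bigr\|_{L^2(\tilde\mu^{[k]})}^2,$$
and combining this with the induction hypothesis yields $\|f\|_{\U^{k+1}_{\IP}(\XX,T)}^{2^{k+1}}\le\sup_{(n_j),\Phi}\lim_N\bigl\|\E_{m\in\IP_{\Phi_N}}(T^{[k]})^m F\bigr\|^2$.

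Finally I invoke Corollary~\ref{METrational} for $\XX^{[k]}$ and $(n_j)$: the inner limit is $\sum_t\omega_\Phi(t)\tilde P_tF$, where $\tilde P_t$ projects onto the $t$-eigenspace of $T^{[k]}$. Exactly as in the base case, orthogonality together with $|\omega_\Phi(t)|\le1$ and rational spectrum bounds its norm squared by $\sum_{t\in e^{2\pi i\mathbb{Q}}}\|\tilde P_tF\|^2=\|\E(F\mid\mathcal{K}_{\rat}(\XX^{[k]}))\|_{L^2(\tilde\mu^{[k]})}^2$, which is independent of $(n_j)$ and $\Phi$ and so bounds the supremum as well. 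By the defining formula for $\tilde\mu^{[k+1]}$ applied with $f_{\eta0}=\mathcal{C}^{|\eta|}f$ and $f_{\eta1}=\overline{\mathcal{C}^{|\eta|}f}$, this last quantity equals $\int_{X^{[k+1]}}\bigotimes_{\epsilon\in V_{k+1}}\mathcal{C}^{|\epsilon|}f\,\d\tilde\mu^{[k+1]}$, closing the induction. I expect the main obstacle to be precisely this last passage, which is also where the statement becomes an inequality rather than an equality: the $\IP$ averages converge not to the conditional expectation $\sum_t\tilde P_t=\E(\cdot\mid\mathcal{K}_{\rat})$ but to the weighted projection $\sum_t\omega_\Phi(t)\tilde P_t$, and the uniform bound $|\omega_\Phi|\le1$ is exactly what allows the weights to be discarded at the cost of passing to $\le$.
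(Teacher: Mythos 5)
Your proof is correct and follows essentially the same route as the paper's: induct on $k$, use Corollary~\ref{METrational} together with orthogonality of eigenspaces and the bound $|\omega_{\Phi}(t)|\leq 1$ to dominate the $\IP$ averages by $\|\E(\cdot\mid\mathcal{K}_{\rat})\|_{L^2}^2$, recognize the cube function of $T^{m_1}f\cdot T^{m_2}\overline{f}$ as $(T^{[k]})^{m_1}F\cdot\overline{(T^{[k]})^{m_2}F}$ so the double average collapses to a single squared norm, and close the induction via the defining formula for $\tilde{\mu}^{[k+1]}$. The only (welcome) addition is your explicit verification that $T^{[k]}$ preserves $\tilde{\mu}^{[k]}$, a point the paper leaves implicit when applying Corollary~\ref{METrational} to the cube system.
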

\begin{proof}
    We proceed by induction on $k$. For $k=1$, Corollary~\ref{METrational} implies that
    \begin{equation}
    \label{eq:k=1bound}
    \|f\|^2_{\U^1_{\IP}(\XX,T)} = \sup_{(n_j)_{j\in\mathbb{N}}}\sup_{\Phi}\Big\|\sum_{t\in \sigma((n_j)_{j\in\mathbb{N}})} \omega_{\Phi}(t)\cdot P_t(f)\Big\|_{L^2(\mu)} ^2,
    \end{equation}
    where 
$\omega_{\Phi}(t) = \lim_{N\to\infty}\prod_{j\in\Phi_N}\frac{1+t^{n_j}}{2}.$
Since eigenspaces corresponding to different eigenvalues are orthogonal and  $|\omega_{\Phi}(t)|\leq 1$ for all $t$ independently both of the sequence $(n_j)_{j\in\mathbb{N}}$ and of the increasing F\o lner sequence $\Phi=(\Phi_N)_{N\in\mathbb{N}}$, the quantity in~\eqref{eq:k=1bound} is bounded by 
  \[
  \Big\|\sum_{\{t=e^{2\pi i r}: r\in\Q\}}P_t (f)\Big\|_{L^2(\mu)}^2 = \|\E(f\mid \mathcal{K}_{\rat}(\XX))\|_{L^2(\mu)}^2.  
  \]
    On the other hand,
    $$\int_{X^{[1]}} f\otimes \overline{f} \d\tilde{\mu}^{[1]} = \int_{X} \E(f\mid \mathcal{K}_{\rat}(\XX))\cdot \overline{\E(f\mid\mathcal{K}_{\rat}(\XX))}\d\mu = \|\E(f\mid \mathcal{K}_{\rat}(\XX))\|^2_{L^2(\mu),}$$ completing the base case.

  For $k\geq 2$, by the inductive  hypothesis, Corollary~\ref{METrational} and the same argument used in the base case, we have
     \begin{align*}\|f\|_{\U^{k}_{\IP}(\XX,T)}^{2^k} &= \sup_{(n_j)_{j\in\mathbb{N}}}\sup_{\Phi}\lim_{N\rightarrow\infty} \left(\E_{m_1,m_2\in \IP_{\Phi_N}\bigl((n_j)_{j\in\mathbb{N}}\bigr)} \left\|T^{m_1}f\cdot \overline{T^{m_2} f}\right\|_{\U^{k-1}_{\IP}(\XX,T)}^{2^{k-1}}\right) \\&\leq \sup_{(n_j)_{j\in\mathbb{N}}}\sup_{\Phi}\lim_{N\rightarrow\infty} \E_{m_1,m_2\in\IP_{\Phi_N}\bigl((n_j)_{j\in\mathbb{N}}\bigr)} \int_{X^{[k-1]}}\bigotimes_{\epsilon\in V_{k-1}} \mathcal{C}^{|\epsilon|}T^{m_1}f\cdot \overline{T^{m_2} f} \d\tilde{\mu}^{[k-1]} \\&= \sup_{(n_j)_{j\in\mathbb{N}}}\sup_{\Phi}\lim_{N\rightarrow\infty}  \int_{X^{[k-1]}}\Big|\E_{n\in\IP_{\Phi_N}\bigl((n_j)_{j\in\mathbb{N}}\bigr)}\bigotimes_{\epsilon\in V_{k-1}} \mathcal{C}^{|\epsilon|}T^{n}f \Big|^2 \d\tilde{\mu}^{[k-1]}
     \\&=\sup_{(n_j)_{j\in\mathbb{N}}}\sup_\Phi \lim_{N\rightarrow\infty} \int_{X^{[k-1]}} \Big|\E_{n\in\IP_{\Phi_N}\bigl((n_j)_{j\in\mathbb{N}}\bigr)}(T^{[k-1]})^n(\bigotimes_{\epsilon\in V_{k-1}} \mathcal{C}^{|\epsilon|}f)\Big|^2\d\tilde{\mu}^{[k-1]}\\&\leq \int_{X^{[k-1]}} \E\Big(\bigotimes_{\epsilon\in V_{k-1}} \mathcal{C}^{|\epsilon|}f\mid\mathcal{K}_{\rat}(X^{[k-1]}\Big)\cdot \overline{\E\Big(\bigotimes_{\epsilon\in V_{k-1}} \mathcal{C}^{|\epsilon|}f \mid \mathcal{K}_\rat(X^{[k-1]}\Big)}\d\tilde{\mu}^{[k-1]} \\&= \int_{X^{[k]}} \bigotimes_{\epsilon\in V_k} \mathcal{C}^{|\epsilon|}f \d\tilde{\mu}^{[k]},
     \end{align*}
    completing the argument.
\end{proof}

We state a simple lemma on rational spectrum. 
\begin{lemma}\label{powerrational}
    If the spectrum of a sequence $(n_j)_{j\in \mathbb{N}}$ is rational, then for every integer $a\in\mathbb{N}$, the spectrum of the sequence $(a\cdot n_j)_{j\in\mathbb{N}}$ is also rational.
\end{lemma}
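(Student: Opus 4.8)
The plan is to unpack the definition of rational spectrum and show that multiplying the generators by a fixed integer $a$ can only shrink (or preserve within $e^{2\pi i \Q}$) the spectrum. Recall that the spectrum $\sigma((n_j)_{j\in\N})$ is the group generated by the complement of the set of $\alpha \in \CS^1$ for which the tail products $\prod_{j=m}^d \frac{1+\alpha^{n_j}}{2}$ tend to $0$ (for all $m$). So the key quantity to track is, for each $\alpha \in \CS^1$, whether $\alpha^{n_j} \to 1$ along a positive-density set of indices, or equivalently whether the infinite product is eventually forced to $0$.

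First I would record the elementary observation underlying the whole argument: for the sequence $(a n_j)_{j\in\N}$, we have $\alpha^{a n_j} = (\alpha^a)^{n_j}$. Hence the tail product $\prod_{j=m}^d \frac{1+\alpha^{a n_j}}{2}$ for the scaled sequence at the point $\alpha$ equals the corresponding tail product $\prod_{j=m}^d \frac{1+\beta^{n_j}}{2}$ for the original sequence at the point $\beta := \alpha^a$. Consequently, if we let $S$ denote the set $\bigcap_m \{\alpha : \lim_d \prod_{j=m}^d \frac{1+\alpha^{n_j}}{2} = 0\}$ attached to $(n_j)$, and $S_a$ the analogous set for $(a n_j)$, then $\alpha \in S_a \iff \alpha^a \in S$. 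In other words, $S_a$ is the preimage of $S$ under the $a$-th power map $p_a\colon \CS^1 \to \CS^1$, $\alpha \mapsto \alpha^a$.

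Next I would translate this back to the complements and the generated groups. Taking complements, $\CS^1 \setminus S_a = p_a^{-1}(\CS^1 \setminus S)$. The spectrum $\sigma((a n_j))$ is the group generated by $\CS^1 \setminus S_a = p_a^{-1}(\CS^1 \setminus S)$. Since $p_a$ is a continuous group homomorphism, $p_a^{-1}(\CS^1\setminus S)$ is contained in $p_a^{-1}(\langle \CS^1\setminus S\rangle) = p_a^{-1}(\sigma((n_j)))$, and the latter is a subgroup (preimages of subgroups under homomorphisms are subgroups). Therefore $\sigma((a n_j)) \subseteq p_a^{-1}(\sigma((n_j)))$. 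Now invoke the hypothesis that $\sigma((n_j)) \subseteq e^{2\pi i \Q}$: it suffices to check that $p_a^{-1}(e^{2\pi i\Q}) = e^{2\pi i \Q}$, which is immediate, since $\alpha^a \in e^{2\pi i \Q}$ means $\alpha^a$ is a root of unity, forcing $\alpha$ itself to be a root of unity, hence $\alpha \in e^{2\pi i \Q}$; the reverse containment is clear. This yields $\sigma((an_j)) \subseteq e^{2\pi i \Q}$, as required.

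I do not anticipate a serious obstacle here; the content is entirely formal once the preimage reformulation $S_a = p_a^{-1}(S)$ is in hand. The one point deserving a moment of care is the passage from the complement of $S$ to the group it generates: I should make sure the inclusion $\sigma((a n_j)) \subseteq p_a^{-1}(\sigma((n_j)))$ is stated at the level of generated groups rather than raw sets, using that $p_a^{-1}$ of a subgroup is a subgroup and that $\langle p_a^{-1}(A)\rangle \subseteq p_a^{-1}(\langle A\rangle)$. The remaining verification that $e^{2\pi i \Q}$ is closed under taking $a$-th roots (in the sense of preimages under $p_a$) is the only number-theoretic input, and it is elementary.
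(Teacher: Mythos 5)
Your proposal is correct and follows essentially the same route as the paper: the paper's one-line proof observes that $\alpha\in\sigma\bigl((a n_j)_{j\in\N}\bigr)$ implies $\alpha^a\in\sigma\bigl((n_j)_{j\in\N}\bigr)$, hence $\alpha$ is a root of unity, which is exactly your preimage identity $S_a=p_a^{-1}(S)$ combined with the observation that $a$-th roots of roots of unity are roots of unity. Your write-up simply makes explicit the details the paper leaves implicit (the passage through generated groups via $\langle p_a^{-1}(A)\rangle\subseteq p_a^{-1}(\langle A\rangle)$), which is a sound elaboration of the same argument.
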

\begin{proof}
   If $\alpha\in \sigma(a\cdot (n_j)_{j\in\mathbb{N}})$, then $\alpha^a \in \sigma((n_j)_{j\in\mathbb{N}})$ and so $\alpha$ is rational.
\end{proof}
We check that $\IP$-seminorms control multiple ergodic averages along an $\IP$, and note that it then  follows immediately from Proposition~\ref{measurecontrol} that rational $\IP$ cubic measures control multiple ergodic averages along an $\IP$.
\begin{proposition}
\label{seminormcontrol}
    Let $(n_j)_{j\in\mathbb{N}}$ be a sequence with rational spectrum. Let $\XX=(X,\mathcal{B},\mu,T)$ be an invertible measure preserving system, let $k\geq 1$,  let $f_1,\dots,f_k\in L^\infty (\mu)$ with $\|f_i\|_{L^\infty(\mu)} \leq 1$  for $i=1, \dots, k$, and let $\ell_1,\dots,\ell_k$ be distinct integers.  For any increasing F\o lner sequence $\Phi=(\Phi_N)_{N\in\mathbb{N}}$, we have
\begin{equation}\label{seminormcontroleq}\limsup_{N\rightarrow\infty}\Big\|\E_{n\in \IP_{\Phi_N}\bigl((n_j)_{j\in\mathbb{N}}\bigr)} T^{\ell_1n}f_1\cdot\ldots\cdot T^{\ell_kn}f_k \Big\|_{L^2(\mu)} \leq \min_{1\leq i \leq k} \|f_i \|_{\U^{k}_{\IP}(\XX,T)}.
    \end{equation}
\end{proposition}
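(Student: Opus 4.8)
The plan is to prove~\eqref{seminormcontroleq} by induction on $k$, using the van der Corput lemma (Lemma~\ref{vdc}) to reduce the $k$-fold average to a $(k-1)$-fold average, exactly as in the classical argument for F\o lner sequences in $\N$ but now carried out along the $\IP$-F\o lner sequences. Since the minimum over $i$ is what appears on the right, by symmetry (relabeling the $\ell_i$ and permuting the factors) it suffices to prove the bound with $\|f_k\|_{\U^k_{\IP}(\XX,T)}$ on the right-hand side, or more precisely to show that the average tends to $0$ in $L^2$ whenever one of the $f_i$ has vanishing $\U^k_{\IP}$-seminorm; the stated inequality then follows.

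For the base case $k=1$, the average is $\EIP T^{\ell_1 n} f_1$. By Lemma~\ref{powerrational}, the sequence $(\ell_1 n_j)_{j\in\N}$ still has rational spectrum, so Corollary~\ref{METrational} applies and the $\limsup$ of the $L^2$-norm is exactly a quantity of the form controlled by the first seminorm; comparing with the definition of $\|\cdot\|_{\U^1_{\IP}(\XX,T)}$ (which takes a supremum over all rational-spectrum sequences and all increasing F\o lner sequences) gives the bound directly. For the inductive step, I would set $x_n = \prod_{i=1}^k T^{\ell_i n} f_i$ and apply Lemma~\ref{vdc}. The inner product $\langle x_{n+m_1}, x_{n+m_2}\rangle$ expands, after integrating and using that $T$ is measure preserving, into an average of a product of $k-1$ functions of the form $T^{(\ell_i-\ell_1)n}\bigl(T^{\ell_i m_1} f_i \cdot \overline{T^{\ell_i m_2} f_i}\bigr)$ (having divided through by $T^{\ell_1 n}$, using that the differences $\ell_i - \ell_1$ for $i\geq 2$ are again distinct and nonzero). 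This is a $(k-1)$-fold average in $n$ of the same type, so the inductive hypothesis bounds its $\limsup$ by the $\U^{k-1}_{\IP}$-seminorm of one of the new functions, say the one indexed by $i=k$, namely $T^{\ell_k m_1} f_k \cdot \overline{T^{\ell_k m_2} f_k}$. Averaging this bound over $m_1, m_2 \in \IP_{\Phi_M}$ and matching it against the inductive definition of the $(k{+}1)$\textsuperscript{st} seminorm (Definition~\ref{def:seminorms}) — after replacing $f_k$ by $T^{\ell_k}$-conjugates and invoking Lemma~\ref{powerrational} so that $(\ell_k n_j)_j$ has rational spectrum — yields the bound by $\|f_k\|_{\U^k_{\IP}(\XX,T)}$.

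The main obstacle I anticipate is bookkeeping the two distinct layers of limits and suprema. The $\IP$-seminorms are defined with a supremum over \emph{all} rational-spectrum generating sequences and \emph{all} increasing F\o lner sequences, whereas~\eqref{seminormcontroleq} concerns one fixed sequence $(n_j)_j$ and one fixed $\Phi$; the induction must therefore be arranged so that the van der Corput step produces an average whose generators $(\ell_k n_j)_j$ (or the appropriate shifts) genuinely lie in the admissible class over which the seminorm supremum is taken, which is precisely where Lemma~\ref{powerrational} is needed. A secondary technical point is justifying the interchange of the $\limsup_N$ with the finite average over $m_1,m_2\in\IP_{\Phi_M}$ and the passage $M\to\infty$ coming out of Lemma~\ref{vdc}; this is handled by the fact that the $m$-average is over a \emph{finite} index set for each fixed $M$, together with the uniform $1$-boundedness of the functions, so the limits can be taken in the order dictated by the lemma. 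Apart from these structural matches, the computation inside the inner product is the standard cocycle-type expansion and should be routine.
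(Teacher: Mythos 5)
Your proposal follows essentially the same route as the paper's proof: induction on $k$, with the base case from Lemma~\ref{powerrational} and Corollary~\ref{METrational}, and the inductive step via the $\IP$ van der Corput lemma (Lemma~\ref{vdc}), $T$-invariance plus Cauchy--Schwarz with $1$-boundedness to factor out one function, the inductive hypothesis, and then Lemma~\ref{powerrational} together with the recursive Definition~\ref{def:seminorms} (and convexity of $x\mapsto x^{2^{k-1}}$) to absorb the $m_1,m_2$-average into the seminorm; the only cosmetic difference is that the paper factors out $f_k$ and retains $\|f_1\|_{\U^k_{\IP}(\XX,T)}$ while you do the reverse. One small caution: your parenthetical claim that it suffices to show the average vanishes whenever some $\|f_i\|_{\U^k_{\IP}(\XX,T)}=0$ is strictly weaker than, and does not imply, the quantitative inequality~\eqref{seminormcontroleq}, but this remark is not load-bearing since the induction you actually carry out is quantitative.
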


\begin{proof}
    We proceed by induction on $k$, noting that the case $k=1$ follows from Lemma~\ref{powerrational} and Theorem~\ref{METrational}. 
    Fixing $k\geq 2$ and changing the order of the functions $f_1,\dots,f_k$ if needed, it suffices to show that the left hand side of~\eqref{seminormcontroleq} is bounded by $\|f_1\|_{\U^{k+1}_\IP(\XX,T)}.$
    Setting
    $$x_n = T^{\ell_1n}f_1\cdot\ldots\cdot T^{\ell_kn}f_k$$ and applying Lemma~\ref{vdc}, we have that 
\begin{align*}&\limsup_{N\rightarrow\infty}\Big\|\underset{n\in \IPPhi}{\E} x_n\Big\|_{L^2(\mu)}^2  \\ &  \leq  \limsup_{M\rightarrow\infty}
\underset{m_1,m_2\in \IP_{\Phi_M}}{\E} \limsup_{N\rightarrow\infty} \underset{n\in\IP_{\Phi_N^M}}{\E} \int_X \prod_{i=1}^k T^{\ell_i(n+m_1)} f_i \cdot \prod_{i=1}^k  \overline{ T^{\ell_i(n+m_2)}f_i} \d\mu \\
& = \limsup_{M\rightarrow\infty}
\underset{m_1,m_2\in \IP_{\Phi_M}}{\E}\limsup_{N\rightarrow\infty} \underset{n\in\IP_{\Phi_N^M}} {\E}  
\int_X \left(T^{\ell_km_1}f_k  \overline{ T^{\ell_km_2} f_k}\right) \prod_{i=1}^{k-1} T^{(\ell_i-\ell_k)n} \left(T^{\ell_i m_1} f_i\cdot  \overline{T^{\ell_i m_2}f}_i\right) \d\mu, \end{align*}
where the last equality follows from 
$T$-invariance of $\mu$.  
Applying the Cauchy-Schwartz Inequality 
and using that $\|f_k\|_{L^\infty(\mu)}\leq 1$, this last quantity is bounded by 
\begin{align*}\limsup_{M\rightarrow\infty}\E_{m_1,m_2\in \IP_{\Phi_M}} \limsup_{N\rightarrow\infty} \Big\|\E_{n\in\IP_{\Phi_N^M}} \prod_{i=1}^{k-1} T^{(\ell_i-\ell_k)n} \left(T^{\ell_i m_1} f_i\cdot T^{\ell_i m_2}\right)\Big\|_{L^2(\mu)}.  \end{align*}
  By the induction hypothesis, we bound this by    $$\limsup_{M\rightarrow\infty}\E_{m_1,m_2\in \IP_{\Phi_M}} \|T^{\ell_1m_1}f_1\cdot T^{\ell_1m_2} \overline{f_1}\|_{\U^{k-1}_{\IP}(\XX,T)}$$ 
  and by convexity of the map $x\mapsto x^{2^{k-1}}$,  this is bounded by 
\begin{align*}\limsup_{M\rightarrow\infty}\big(\E_{m_1,m_2\in \IP_{\Phi_M}} \|T^{\ell_1m_1}f_1\cdot T^{\ell_1m_2} \overline{f_1}\|^{2^{k-1}}_{\U^{k-1}_{\IP}(\XX,T)}\big)^{1/2^{k-1}} \leq \|f_1\|^2_{\U^{k}_{\IP}(\XX,T)}. 
   \end{align*}
Thus we shown that 
   \[
   \limsup_{N\rightarrow\infty}\|\E_{n\in \IP_{\Phi_N}\bigl((n_j)_{j\in\mathbb{N}}\bigr)} T^{\ell_1n}f_1\cdot\ldots\cdot T^{\ell_kn}f_k \|_{L^2(\mu)}^2 \leq \min_{1\leq i \leq k} \|f_1 \|^2_{\U^{k}_{\IP}(\XX,T)},
   \] and taking square roots, the statement follows. 
   \end{proof}

\begin{lemma}\label{abscont}
Let $\XX=(X,\mathcal{B},\mu,T)$ be an invertible measure preserving system, let $k\geq 2$, and let    $(f_\epsilon)_{\epsilon\in V_k}\in L^\infty(\mu)$ be  $2^k$ functions.  If  
$\|f_\epsilon\|_{\U^{k}(\XX,T)}=0$ for some 
$\epsilon\in V_k$, then  
$$\int_{X^{[k]}} \bigotimes_{\epsilon\in V_k} \mathcal{C}^{|\epsilon|}f_\epsilon \d\tilde{\mu}^{[k]}=0.$$ 
\end{lemma}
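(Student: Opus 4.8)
The plan is to deduce the lemma from the Gowers--Cauchy--Schwarz (generalized von Neumann) inequality for the box seminorm attached to the cube measure $\tilde\mu^{[k]}$, which I read here as the seminorm defined by $\|f\|_{\U^k(\XX,T)}^{2^k}=\int_{X^{[k]}}\bigotimes_{\epsilon\in V_k}\mathcal{C}^{|\epsilon|}f\d\tilde\mu^{[k]}$. The goal is the multilinear bound
$$\Big|\int_{X^{[k]}}\bigotimes_{\epsilon\in V_k}\mathcal{C}^{|\epsilon|}f_\epsilon\d\tilde\mu^{[k]}\Big|\leq\prod_{\epsilon\in V_k}\|f_\epsilon\|_{\U^k(\XX,T)}.$$
Granting this, the lemma is immediate: if $\|f_{\epsilon_0}\|_{\U^k(\XX,T)}=0$ for even a single $\epsilon_0\in V_k$, the right-hand side vanishes, forcing the integral to be $0$.

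To prove the multilinear inequality I would apply the Cauchy--Schwarz inequality once in each of the $k$ coordinate directions of the cube, exactly as in the classical argument for the Host--Kra seminorms. Fixing the last coordinate and using the defining disintegration of $\tilde\mu^{[k]}$ as the relatively independent joining of $\tilde\mu^{[k-1]}$ with itself over $\mathcal{K}_{\rat}(\XX^{[k-1]})$, I write the integral as $\int_{X^{[k-1]}}\E(F_0\mid\mathcal{K}_{\rat})\cdot\overline{\E(F_1\mid\mathcal{K}_{\rat})}\d\tilde\mu^{[k-1]}$, where $F_i=\bigotimes_{\eta\in V_{k-1}}\mathcal{C}^{|\eta|}f_{\eta i}$ collects the functions on the faces $\{\epsilon_k=i\}$, the extra conjugation on the top face arising from $|\eta1|=|\eta|+1$. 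A conditional Cauchy--Schwarz then bounds the square of this quantity by $\prod_{i=0,1}\int_{X^{[k-1]}}|\E(F_i\mid\mathcal{K}_{\rat})|^2\d\tilde\mu^{[k-1]}$, and each factor is again an integral of the form $\int_{X^{[k]}}\bigotimes_\epsilon\mathcal{C}^{|\epsilon|}\hat f_\epsilon\d\tilde\mu^{[k]}$ in which the two faces carry equal functions, i.e. $\hat f_\epsilon$ is independent of the last coordinate. Iterating this doubling step through all $k$ directions replaces every $f_\epsilon$ by copies of a single original function, so that the $2^k$ resulting corner integrals are exactly $\|f_\epsilon\|_{\U^k(\XX,T)}^{2^k}$; collecting the successive square roots yields the product bound. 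The base case $k=1$ is a single Cauchy--Schwarz, $|\int f_0\otimes\overline{f_1}\d\tilde\mu^{[1]}|=|\int\E(f_0\mid\mathcal{K}_{\rat})\overline{\E(f_1\mid\mathcal{K}_{\rat})}\d\mu|\leq\|f_0\|_{\U^1}\|f_1\|_{\U^1}$.

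The step that requires care, and which I expect to be the main obstacle, is running the Cauchy--Schwarz in \emph{every} coordinate and not only in the last one: the disintegration above is built into the definition only for the final coordinate, so to repeat it along an arbitrary direction I need $\tilde\mu^{[k]}$ to be invariant under permutations of the $k$ cube directions, which lets any coordinate play the role of the ``last''. I would establish this symmetry by induction on $k$ in the spirit of Host--Kra. Invariance under permuting the first $k-1$ directions is inherited because $\tilde\mu^{[k]}$ is a diagonal joining of two copies of the $S_{k-1}$-invariant measure $\tilde\mu^{[k-1]}$ over the canonical, hence permutation-invariant, factor $\mathcal{K}_{\rat}(\XX^{[k-1]})$. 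The genuinely nontrivial point is invariance under swapping the last two directions, which amounts to showing that the relatively independent joinings over $\mathcal{K}_{\rat}$ in these two directions commute; this is the technical crux, but the argument is insensitive to whether one joins over the invariant factor or over $\mathcal{K}_{\rat}$, so the Host--Kra proof transfers once one checks that $\mathcal{K}_{\rat}$ behaves functorially under the relevant factor maps. With this symmetry in hand the iterated Cauchy--Schwarz goes through verbatim.
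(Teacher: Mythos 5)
There is a genuine gap, and it occurs at the very first step: you have misidentified the seminorm in the hypothesis. In this paper $\|\cdot\|_{\U^k(\XX,T)}$ (no $\IP$ subscript) denotes the \emph{standard} Host--Kra seminorm of $(X,\mathcal{B},\mu,T)$, built from the usual cubic measures $\mu^{[k]}$, which are relatively independent joinings over the \emph{invariant} factor; this is exactly how the lemma is consumed in the proof of Theorem~\ref{HKcharacteristic}, where $\E(f_j\mid Z_{k-1}(\XX))=0$ yields $\|f_j\|_{\U^k(\XX,T)}=0$ by Lemma 4.3 of Host--Kra. It is \emph{not} the box seminorm attached to $\tilde\mu^{[k]}$, whose joinings are taken over $\mathcal{K}_{\rat}$. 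Under your reading the lemma does follow at once from a Gowers--Cauchy--Schwarz inequality for $\tilde\mu^{[k]}$, but that proves a different and weaker statement: since $\mathcal{K}_{\rat}$ contains the invariant $\sigma$-algebra, conditioning on it only increases the $L^2$ norms appearing in the inductive definition of the cube integrals, so a priori the $\tilde\mu^{[k]}$-box seminorm dominates the standard one, and its vanishing is a \emph{stronger} hypothesis than the one actually assumed.

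What is missing is precisely the heart of the lemma: why does vanishing of the standard seminorm (defined via the invariant factor) force vanishing of correlations against the \emph{modified} cube measure (defined via $\mathcal{K}_{\rat}$)? Your iterated Cauchy--Schwarz correctly reduces the $2^k$-function statement to the diagonal case, but the diagonal case is where all the work lies, and no amount of Cauchy--Schwarz manipulation bridges the two $\sigma$-algebras. The paper's proof supplies this bridge in two steps: first it shows $\mathcal{K}_{\rat}(\YY)=\bigvee_{\ell}\mathcal{I}_{\ell!}$, where $\mathcal{I}_\ell$ is the $\sigma$-algebra of $T^\ell$-invariant sets, so that
\begin{equation*}
\int_{X^{[k]}}\bigotimes_{\epsilon\in V_k}\mathcal{C}^{|\epsilon|}f\d\tilde\mu^{[k]}
=\lim_{\ell\to\infty}\int_{X^{[k]}}\bigotimes_{\epsilon\in V_k}\mathcal{C}^{|\epsilon|}f\d\mu_\ell^{[k]},
\end{equation*}
with $\mu_\ell^{[k]}$ the \emph{standard} Host--Kra measure of the system $(X,\mathcal{B},\mu,T^{\ell!})$; second, it invokes a theorem of Leibman asserting that the Host--Kra seminorms of $T$ and of $T^{m}$ have the same null functions, which converts the hypothesis $\|f\|_{\U^k(\XX,T)}=0$ into the vanishing of every $\mu_\ell^{[k]}$-integral. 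This is a genuinely nontrivial external input that your proposal does not replace. (Incidentally, the permutation symmetry of $\tilde\mu^{[k]}$ that you flag as the technical crux of your route comes for free in the paper's approach, since $\tilde\mu^{[k]}$ is exhibited as a limit of the symmetric measures $\mu_\ell^{[k]}$.)
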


\begin{proof}
Let $\YY=(Y,\mathcal{C},\nu,S)$ be an invertible measure preserving system. 
For each eigenfuction $f\in L^2(\YY)$  with rational eigenvalue, there exists some $\ell$ such that $f$ is $S^\ell$-invariant. Letting $\mathcal{I}_\ell$ denote the $\sigma$-algebra of $\ell$-invariant functions, the sequence of $\sigma$-algebras $\mathcal{I}_{\ell!}$ is increasing and $\mathcal{K}_\rat(\YY)$ is a sub-$\sigma$-algebra of the join $\bigvee_{\ell\in\mathbb{N}} \mathcal{I}_{\ell!}$.

Conversely, for each $\ell\in \mathbb{N}$, taking $f$ to be an $S^\ell$-invariant function and letting $\xi_\ell$ be the first root of unity of order $\ell$, we have that the functions 
$$f_j= \sum_{i=0}^{\ell-1} \xi_\ell^{i\cdot j} \cdot T^if$$ are eigenfunctions for all $1\leq j \leq \ell$.
Since $\sum_{i=0}^{\ell-1} \xi_\ell^{i\cdot j} = 0$ for all $1\leq j \leq \ell-1$, we deduce that $f=\frac{1}{\ell}\sum_{j=1}^\ell f_j$. Thus the $\sigma$-algebra $\mathcal{I}_\ell$ of $\ell$-invariant functions is a sub-$\sigma$-algebra of $\mathcal{K}_{\rat}(\YY)$, and 
$\mathcal{K}_{\rat}(\YY) = \bigvee_{\ell\in\mathbb{N}}\mathcal{I}_{\ell!}$. In particular, for any function $f\in L^\infty(\YY)$, we have that 
\[\E(f\mid\mathcal{K}_\rat(\YY)) = \lim_{\ell\rightarrow\infty} \E(f\mid\mathcal{I}_{\ell!}).
\]

Taking the Host-Kra measure $\mu_\ell^{[k]}$ on $\XX=(X,\mathcal{B},\mu,T^{\ell!})$ (see~\cite[Section 3]{host2005nonconventional}), then it follows from this and the definition of $\tilde{\mu}^{[k]}$ that for any function $f\in L^\infty(\mu)$ we have
$$\int_{X^{[k]}} \bigotimes_{\epsilon\in V_k} \mathcal{C}^{|\epsilon|}f  \d\tilde{\mu}^{[k]} = \lim_{\ell\rightarrow\infty} \int_{X^{[k]}} \bigotimes_{\epsilon\in V_k} \mathcal{C}^{|\epsilon|}f~ \d\mu_{\ell}^{[k]}.$$
By a theorem of Leibman~\cite[Theorem 2]{leibman-hkz}, it follows that  $\|f\|_{\U^k(\XX,T)}=0$ is equivalent to 
\[\int_{X^{[k]}} \bigotimes_{\epsilon\in V_k} \mathcal{C}^{|\epsilon|}f\d\mu_{\ell}^{[k]}=0 \]
for all $\ell\in\N$.   It follows that 
\begin{equation*}
\int_{X^{[k]}} \bigotimes_{\epsilon\in V_k} \mathcal{C}^{|\epsilon|}f\d\tilde{\mu}^{[k]}=0 \qedhere
\end{equation*}
\end{proof}

We have now assembled the tools to complete the proof of Theorem~\ref{HKcharacteristic}.
\begin{proof}[Proof of Theorem~\ref{HKcharacteristic}]
Let $f_1,\dots,f_k\in L^\infty(\mu)$ and assume  $\E(f_j \mid Z_{k-1}(\XX))=0$ for some $1\leq j \leq k$.  It follows from this assumption that $\|f_j\|_{\U^{k}(\XX,T)}=0$ (see for example~\cite[Lemma 4.3]{host2005nonconventional}).  
By Propositions~\ref{measurecontrol} and~\ref{seminormcontrol},  we have that 
$$\lim_{N\rightarrow\infty}\Big\|\sEIP \prod_{i=1}^k T^{\ell_in} f_i\Big\|^{2^{k}}_{L^2(\mu)} \leq \int_{X^{[k]}} \bigotimes_{\epsilon\in V_k} f_j \d\tilde{\mu}^{[k]}.$$
By Lemma~\ref{abscont}, we deduce that $$\lim_{N\rightarrow\infty}\big\|\sEIP \prod_{i=1}^k T^{\ell_in} f_i\big\|^{2^{k}}_{L^2(\mu)} =0$$ and therefore
\begin{equation*}
    \lim_{N\rightarrow\infty}\sEIP \prod_{i=1}^k T^{\ell_in} f_i=0.\qedhere\end{equation*}
\end{proof}

\section{Limit formula in nilsystems}\label{limit:sec}
\subsection{Limits for systems in which the spectrum has no nontrivial eigenvalues}
Nilsystems are the natural class that arise in understanding uniformity norms and cubes in~\cite{host2005nonconventional} and generalizations have been to understand combinatorial structures associated to arithmetic progressions and cubes, including work in~\cite{cgss-2023,candela-szegedy-inverse, candela-szegedy-memoir, gmv,gmv2,gmv3,jst,jt21-1}. 

 Recall that $\XX = (X, \CB, \mu, T_\tau)$ is a {\em $k$-step nilsystem} if $X = G/\Gamma$ for a $k$-step nilpotent Lie group $G$ with discrete, cocompact  subgroup $\Gamma$, $\CB$ is the Borel $\sigma$-algebra, $\mu$ is the unique Borel probability measure on $X$ invariant under the action of $G$ on $X$ by left translation, and $T_
\tau\colon X\to X$ is the action $x\mapsto \tau\cdot x$ for some fixed element $\tau\in G$. In case of ambiguity, we include the space on the measure and write $\mu = \mu_X = \mu_{G/\Gamma}$. 

As with the ergodic average, the limit formula in a nilsystem depends crucially on the spectrum of $T_\tau$. 
We start with the special case of a $1$-step nilsystem whose spectrum is disjoint from $\sigma\big((n_j)_{j\in\mathbb{N}}\big)$.
Recall that $\ZZ=(Z, \mathcal{B},\mu, R_a)$ is a {\em $1$-step nilsystem} means that $Z$ is a compact abelian group, $\CB$ is the Borel $\sigma$-algebra, $\mu$ is the Haar measure, and $R_a$ is the rotation by a fixed element $a\in Z$.  This system is ergodic if and only if the orbit of the element $a$ is dense. 

\begin{theorem}[Limit formula for two terms]\label{Kroneckerformula}
    Let $\ZZ=(Z, \mathcal{B},\mu, R_a)$ be an ergodic $1$-step nilsystem, let $(n_j)_{j\in\mathbb{N}}$ be a sequence whose spectrum contains no nontrivial eigenvalue of $Z$, and let $\Phi=(\Phi_N)_{N\in\mathbb{N}}$ be an increasing F\o lner sequence. 
    Then for all  $f_1,\dots,f_k\in L^\infty(\mu)$ and integers $\ell_1,\dots,\ell_k\in \mathbb{N}$, we have
    $$\lim_{N\rightarrow\infty} \E_{n\in \IP_{\Phi_N}\big((n_j)_{j\in\mathbb{N}}\big)} \prod_{i=1}^k f_i(T^{\ell_in}x) = \int_Z \prod_{i=1}^k f_i(x+\ell_it) \d\mu(t)$$ in $L^2(\mu)$.
\end{theorem}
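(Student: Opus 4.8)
The plan is to reduce to the case where each $f_i$ is a character of $Z$ and then compute both sides in closed form. Writing $Z$ additively so that $T=R_a$ acts by $T^n x = x+na$, recall that the characters $\chi\in\widehat Z$ are exactly the eigenfunctions of the system, with $\chi$ having eigenvalue $\chi(a)$. Both sides of the asserted identity are multilinear in $(f_1,\dots,f_k)$, and the corresponding multilinear maps into $L^2(\mu)$ are bounded by $\prod_{i=1}^k\|f_i\|_{L^\infty(\mu)}$, since $T$ preserves both the $L^\infty$-norm and the measure $\mu$. A routine telescoping estimate (replacing one argument at a time and using that translation is an $L^2$-isometry while the remaining factors are $L^\infty$-bounded) then shows these maps are jointly $L^2(\mu)$-continuous, uniformly in $N$, on bounded subsets of $L^\infty(\mu)$. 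Hence, by the density of finite linear combinations of characters in $L^2(\mu)$ (Stone--Weierstrass), it suffices to establish the identity when each $f_i=\chi_i$ is a character.

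For characters I would set $\phi:=\prod_{i=1}^k\chi_i$ and $\psi:=\prod_{i=1}^k\chi_i^{\ell_i}$, both in $\widehat Z$. Using $\chi_i(T^{\ell_i n}x)=\chi_i(x)\chi_i(a)^{\ell_i n}$, the left-hand average factors as $\phi(x)\cdot\E_{n\in\IP_{\Phi_N}((n_j)_{j\in\N})}\beta^n$, where $\beta:=\prod_{i=1}^k\chi_i(a)^{\ell_i}=\psi(a)$. By Corollary~\ref{METrational} (equivalently, Proposition~\ref{avgtoprod} together with the definition~\eqref{omega} of $\omega_\Phi$), this converges in $L^2(\mu)$ to $\phi(x)\,\omega_\Phi(\beta)$. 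For the right-hand side, $\chi_i(x+\ell_i t)=\chi_i(x)\,\chi_i^{\ell_i}(t)$, so orthogonality of characters gives $\int_Z\prod_i\chi_i(x+\ell_i t)\,\d\mu(t)=\phi(x)\int_Z\psi(t)\,\d\mu(t)$, which equals $\phi(x)$ if $\psi$ is the trivial character and $0$ otherwise.

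The crux is then to match $\omega_\Phi(\beta)$ with this indicator, and this is exactly where the ergodicity and spectrum hypotheses enter. If $\psi$ is trivial, then $\beta=\psi(a)=1$ and $\omega_\Phi(1)=1$, matching. If $\psi$ is nontrivial, then ergodicity of $R_a$ forces $\psi(a)\neq 1$ (otherwise $\psi$ would be a nonconstant $T$-invariant function), so $\beta=\psi(a)$ is a nontrivial eigenvalue of $Z$; the hypothesis that $\sigma((n_j)_{j\in\N})$ contains no nontrivial eigenvalue of $Z$ then gives $\beta\notin\sigma((n_j)_{j\in\N})$, and hence $\omega_\Phi(\beta)=0$ by the remark following Definition~\ref{spectrum}, valid for every increasing F\o lner sequence $\Phi$. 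Both sides therefore vanish, completing the verification for characters and, via the first paragraph, the theorem. I expect the only genuinely delicate points to be administrative rather than conceptual: making the telescoping/approximation step uniform in $N$ so that $L^2$-convergence passes from characters to all bounded $f_i$, and correctly invoking ergodicity to upgrade ``$\psi$ nontrivial'' to ``$\psi(a)$ a nontrivial eigenvalue,'' which is precisely the feature the spectrum hypothesis is designed to annihilate.
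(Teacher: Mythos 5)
Your proposal is correct and takes essentially the same route as the paper's proof: reduce to characters of $Z$ by density, use that characters are eigenfunctions to write the average as $\phi(x)\cdot\omega_\Phi(\psi(a))$, and invoke the spectrum hypothesis to identify $\omega_\Phi(\psi(a))$ with $\delta_1(\psi(a))$, which matches the right-hand integral. The extra details you supply (the telescoping estimate making the $L^2$-approximation uniform in $N$, and the explicit ergodicity argument that a nontrivial character $\psi$ satisfies $\psi(a)\neq 1$) are steps the paper's proof leaves implicit.
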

\begin{proof}
Since linear combinations of characters are dense in $L^2(\mu)$, it it suffices to establish the formula when  $f_1,\dots,f_k$ are characters of $Z$.  Furthermore, since $Z$ is an abelian group, it suffices to assume that they are  eigenfunctions with eigenvalues $\alpha_1,\dots\alpha_k$ respectively. Recall that $\omega_{\Phi}$ is defined in~\eqref{omega} and $\delta_1$ in~\eqref{delta}. Since the spectrum of $(n_j)_{j\in\mathbb{N}}$ contains no nontrivial eigenvalue of $Z$, we have that $\omega_{\Phi}(\alpha) = \delta_1(\alpha)$ for all eigenvalues of the system. 
By Theorem~\ref{METrational}, and Fourier analysis it follows that 
\begin{align*}
  \lim_{N\rightarrow\infty} \sEIP & \prod_{i=1}^k f_i(T^{\ell_i n}x) =  \lim_{N\rightarrow\infty} \sEIP \Big(\prod_{i=1}^k \alpha_i^{\ell_i}\Big)^n \cdot\Big(\prod_{i=1}^k f_i(x)\Big) \\&= \omega_{\Phi}\Big(\prod_{i=1}^k \alpha_i^{\ell_i}\Big)\cdot \Big(\prod_{i=1}^k f_i(x)\Big) = \delta_1\Big({\prod_{i=1}^k \alpha_i^{\ell_i}}\Big)\cdot \Big(\prod_{i=1}^k f_i(x)\Big)\\&=\int_Z \prod_{i=1}^k f_i(x+\ell_it)\d\mu(t),
\end{align*} 
completing the proof.
\end{proof}

For a nilsystem $\XX=(G/\Gamma, \mathcal{B},\mu, R_a)$ and integers $\ell_1,\dots,\ell_k$, set
$$\tilde{G} = \tilde{G}_{\ell_1,\dots,\ell_k} = \Big\{ \Big(\prod_{i=1}^{k-1} g_i^{\binom{\ell_j}{i}}u_j\Big)_{j\in\{1,\dots,k\}} :  g_i\in G_i, u_j\in G_{k}, 1\leq i,j\leq k\Big\} \leq G^k,$$ where $\binom{n}{m}=0$ if $m>n$.  (As $\ell_1,\dots,\ell_k$ are fixed, we simplify the notation and write $\tilde{G}=\tilde{G}_{\ell_1,\dots,\ell_k}$.)   Setting $\tilde{\Gamma} = \tilde{G} \cap \Gamma^{k}$, it is shown in~\cite{leibmanpolymap} that $\tilde{G}$ is a closed subgroup of $G^k$. The discrete subgroup $\tilde{\Gamma}$ is cocompact and so $\tilde{X} = \tilde{G}/\tilde{\Gamma}$ is a nilmanifold endowed with a Haar measure $\tilde{\mu}$.

The main goal of this section is prove the following limit formula. 
\begin{theorem}\label{formula}
    Let $k\geq 1$, let $\XX=(G/\Gamma, \mathcal{B},\mu_{G/\Gamma}, T_\tau)$ be an ergodic $k$-step nilsystem, let $(n_j)_{j\in\mathbb{N}}$ be a sequence whose spectrum contains no nontrivial eigenvalue of $T_\tau$, and let $(\Phi_N)_{N\in\mathbb{N}}$ be an increasing F\o lner sequence. For all $f_1,\dots,f_k\in L^\infty(\mu)$ and distinct integers $\ell_1,\dots,\ell_k\in\mathbb{N}$, we have 
$$\lim_{N\rightarrow\infty} \E_{n\in \IP_{\Phi_N}\bigl((n_j)_{j\in\mathbb{N}}\bigr)} \prod_{i=1}^k f_i(T_\tau^{\ell_in}x) = \int_{\tilde{X}} \prod_{i=1}^k f_i(xy_i) \d\tilde{\mu}(y\tilde{\Gamma})$$ in $L^2(\mu)$,  where $y=(y_1,\dots,y_k)\in \tilde{G}$.
\end{theorem}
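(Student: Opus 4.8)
The plan is to collapse the $k$-term average into a single linear $\IP$ average of one function on a product nilmanifold, and then to prove an $\IP$-equidistribution statement there. Setting $F=\bigotimes_{i=1}^k f_i$ on $X^k$, I would observe that
$$\prod_{i=1}^k f_i(T_\tau^{\ell_i n}x)=F\bigl(g^n\cdot(x,\dots,x)\bigr),\qquad g=(\tau^{\ell_1},\dots,\tau^{\ell_k}),$$
where $g$ acts coordinatewise. Taking $g_1=\tau$, $g_i=\id$ for $i\geq 2$, and $u_j=\id$ in the definition of $\tilde{G}$ shows that $g\in\tilde{G}$, so the orbit $\{g^n\cdot(x,\dots,x)\}_{n}$ lies in the subnilmanifold $\tilde{G}\cdot(x,\dots,x)\subseteq X^k$, a copy of $\tilde{X}=\tilde{G}/\tilde{\Gamma}$, and the right-hand side of the theorem is exactly the integral of $F$ against the Haar measure $\tilde{\mu}$ of this copy. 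Thus it suffices to establish the single-orbit equidistribution
$$\lim_{N\to\infty}\EIP F\bigl(g^n\cdot(x,\dots,x)\bigr)=\int_{\tilde{X}}F\,\d\tilde{\mu}$$
for $\mu$-almost every $x$; convergence in $L^2$ then follows by bounded convergence.

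I would prove this equidistribution by induction on the nilpotency step of $\tilde{G}$, reducing at each stage to the maximal torus factor. By Lemma~\ref{dense} it is enough to treat a continuous vertical character $F$ relative to the top central extension $\tilde{X}\to \tilde{X}/W$, where $W$ is the top-degree torus; write $F(z,w)=\phi(z)\chi(w)$. If $\chi$ is trivial, then $F$ descends to the lower-step factor $\tilde{X}/W$ and the induction hypothesis applies directly. If $\chi$ is nontrivial, then $\int_{\tilde X}F\,\d\tilde\mu=0$, and I would apply the $\IP$ van der Corput Lemma~\ref{vdc}: the squared average is dominated by an $\IP$ average over the differenced orbit, in which the top-degree contribution is detected by $\chi$ while the remaining average runs over a lower-step nilmanifold, so the induction hypothesis forces the limit to vanish. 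The base case is a $1$-step nilsystem, i.e.\ a rotation on a compact abelian group, which is handled directly by Theorem~\ref{Kroneckerformula}.

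The spectral hypothesis enters precisely at the torus level, both in the base case and in verifying that it is inherited through the induction. The eigenvalues of the rotation by the image of $g$ on the maximal torus factor of $\tilde{X}$ are values $\chi(g)$ of characters pulled back from the abelianization of $\tilde{G}$; by the structure of $\tilde{G}$ these are products of powers $\alpha_i^{\ell_i}$ of eigenvalues $\alpha_i$ of $T_\tau$, hence are themselves eigenvalues of $T_\tau$ on the Kronecker factor of $X$, since the eigenvalues of a group rotation form a group. By the hypothesis that $\sigma\big((n_j)_{j\in\mathbb{N}}\big)$ contains no nontrivial eigenvalue of $T_\tau$, each nontrivial such $\chi(g)$ lies outside the spectrum, so $\omega_{\Phi}(\chi(g))=\delta_1(\chi(g))=0$; Theorem~\ref{Kroneckerformula} then yields the equidistribution on the torus. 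Finally, the ergodicity of $(X,T_\tau)$ guarantees, via the Leibman-type density of the orbit of $(x,\dots,x)$ in $\tilde{X}$, that the limiting measure is the full Haar measure $\tilde{\mu}$ rather than that of a proper subnilmanifold.

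I expect the main obstacle to be this inductive reduction to the torus for $\IP$ averages. Unlike Cesàro averages, $\IP$ averages need not equidistribute on nilmanifolds, so the vanishing of the nontrivial-vertical-character contributions cannot be assumed and must be extracted carefully from Lemma~\ref{vdc}, checking both that the differenced system is again an $\IP$ average of the required type and that the spectral hypothesis is preserved at each step. A secondary but essential subtlety is identifying the orbit closure of the diagonal point with all of $\tilde{X}$, so that the limit is genuinely $\int_{\tilde X}F\,\d\tilde\mu$; this is exactly where the ergodicity of the base system, and not merely convergence of the average, is used.
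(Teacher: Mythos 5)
Your overall architecture coincides with the paper's: the paper likewise reduces Theorem~\ref{formula} to equidistribution of the single $\IP$ orbit of the diagonal point in $\tilde{X}_x=\tilde{G}/\tilde{\Gamma}_x$ (with ergodicity of the product action supplied, for $\mu$-a.e.\ $x$, by \cite[Corollary 5.5]{BHK}), and proves that equidistribution by induction on the step using vertical characters and Lemma~\ref{vdc}, with the spectral hypothesis entering at the abelian level exactly as you describe. The genuine gap is that the inductive statement you sketch --- which is precisely the paper's Theorem~\ref{IPpointwise}, the technical heart of this section --- is left unproved, and your setup for it would fail as written. You write a vertical character as $F(z,w)=\phi(z)\chi(w)$, which presumes that the top-torus bundle $\tilde{X}\to\tilde{X}/W$ admits a continuous trivialization; for nonabelian nilmanifolds it does not (the Heisenberg nilmanifold is already a nontrivial circle bundle over $\T^2$). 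The paper obtains such a product structure by passing to the Host--Kra toral model, in which the system is measure-theoretically isomorphic to a tower of extensions $Z_{j}\times_\rho U_{j+1}$ --- but then the cocycle $\rho$ is merely measurable, while the induction must propagate \emph{everywhere} convergence for \emph{continuous} functions, since the differenced function produced by van der Corput has to remain in the class to which the inductive hypothesis applies. This is repaired by Lindenstrauss's theorem~\cite[Theorem 3.1]{lindenstraussdistal}, replacing $\rho$ by a continuous cohomologous cocycle, which in turn requires --- and forces the paper to track through the induction --- the condition that the periodic points have measure zero. None of this apparatus (nor any substitute, such as working with bundle-theoretic vertical characters and observing that $F\circ g^{m_1}\cdot\overline{F\circ g^{m_2}}$ is $W$-invariant, hence descends continuously to the lower-step quotient) appears in your proposal, so the induction does not close.

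A second, smaller gap: after van der Corput, the induction hypothesis only converts the inner limit over $n$ into an integral over the lower-step system; it does not by itself ``force the limit to vanish.'' One must still show that the outer average over $m_1,m_2\in\IP_{\Phi_M}$ of those integrals tends to $0$ as $M\to\infty$. Since those integrals equal $\langle T_g^{m_1}F,\,T_g^{m_2}F\rangle_{L^2(\tilde{\mu})}$, the outer average is $\bigl\|\E_{m\in\IP_{\Phi_M}}T_g^{m}F\bigr\|_{L^2(\tilde{\mu})}^2$, and its vanishing is a separate application of the $\IP$ mean ergodic theorem on $(\tilde{X},g)$ (the step ``applying the mean ergodic theorem in $\YY$ to $f_\chi$'' in the paper's proof of Theorem~\ref{IPpointwise}), which again uses the spectral hypothesis rather than the inductive one. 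Finally, note that your ``for $\mu$-almost every $x$'' formulation is admissible only because the a.e.\ restriction is used solely to guarantee ergodicity of $\tilde{X}_x$; the convergence statement that propagates through the induction must hold at the specific projected point at every level, which is exactly why Theorem~\ref{IPpointwise} is stated for \emph{every} point and for continuous functions.
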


For general (non-$\IP$) multiple averages Ziegler~\cite{zieglerformula} derived a limit formula, and a different method for doing so was given  in~\cite{BHK}. Both proofs rely on the unique ergodicity of nilsystems and the existence of the limit of the mean ergodic average everywhere for continuous functions on nilsystems.  
 In our context of evaluating the averages along $\IP$-F\o lner sequences, we need a different approach.  Instead, we identify a class of  functions that is dense and  for which we can show convergence everywhere, and this class suffices to prove for convergence in $L^2(\mu)$ for all bounded functions.

We start with the case where the set of eigenvalues of the nilsystem is disjoint from the spectrum of $(n_j)_{j\in\N}$. 
\begin{theorem}[Pointwise convergence along $\IP$s for nilsystems with disjoint spectrum]\label{IPpointwise}
    Let $k\geq 1$ and let $\XX=(G/\Gamma,\mathcal{B},\mu,T_\tau)$ be an ergodic $k$-step nilsystem. There exists an ergodic system $\YY=(Y,\mathcal{C},\nu,T)$ on a compact metric space $Y$ such that the set of $T$-periodic points has $\nu$-measure zero and there exists a measure theoretic isomorphism $\pi\colon X\rightarrow Y$
    such that for any sequence $(n_j)_{j\in\mathbb{N}}$ whose spectrum contain no nontrivial eigenvalue of $T_\tau$ 
    and every increasing F\o lner sequence $\Phi=(\Phi_N)_{N\in\N}$, we have that 
    \begin{equation}
        \label{eq:conv-cont}
\lim_{N\rightarrow\infty}\E_{n\in \IP_{\Phi_N}\big((n_j)_{j\in\mathbb{N}}\big)} f(T_\tau^ny) = \int_Y f\d\mu
    \end{equation}
    for any continuous function $f\in C(Y)$  and  $y\in Y$. 
\end{theorem}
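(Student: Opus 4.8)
The plan is to prove the everywhere convergence in~\eqref{eq:conv-cont} by induction on the nilpotency step of $\XX$, taking for $\YY$ a minimal topological model of $\XX$ — for instance the nilmanifold realization itself. The structural half of the statement is cheap: an ergodic nilsystem is minimal and uniquely ergodic, so when $X$ is infinite it has no periodic points whatsoever, and hence the set of $T$-periodic points of the model is empty, a fortiori $\nu$-null. Thus all the work is in the limit formula. Fixing $y$ and replacing $f$ by $f_0=f-\int_Y f\d\nu$, it suffices to show that $\E_{n\in\IP_{\Phi_N}} f_0(T^n y)\to 0$ for every $y$, and by Lemma~\ref{dense} I may assume that $f_0$ is a vertical character $f_0=g\cdot\chi$ for a character $\chi$ of the top central subgroup $G_k$.

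If $\chi$ is trivial then $f_0$ descends to the $(k-1)$-step factor $G/(G_k\Gamma)$, which is again an ergodic nilsystem whose nontrivial eigenvalues are inherited from those of $\XX$ and hence still lie outside $\sigma((n_j)_{j\in\mathbb{N}})$, so the inductive hypothesis disposes of this case. The base case $k=1$, where $X$ is a compact abelian group rotation, is the direct computation behind Corollary~\ref{cor:equidistribution}: for a nonconstant character with eigenvalue $\alpha$, Proposition~\ref{avgtoprod} gives $\E_{n\in\IP_{\Phi_N}} f_0(T^n y)=\big(\prod_{j\in\Phi_N}\tfrac{1+\alpha^{n_j}}{2}\big)f_0(y)$, and since $\alpha\notin\sigma((n_j)_{j\in\mathbb{N}})$ the product tends to $\omega_\Phi(\alpha)=0$ at every point by~\eqref{omega}.

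For a nontrivial $\chi$ I would apply the $\IP$ van der Corput inequality (Lemma~\ref{vdc}) in the Hilbert space $\mathcal{H}=\C$ to the scalar sequence $x_n=f_0(T^n y)$. Because $\langle x_{n+m_1},x_{n+m_2}\rangle=\big((T^{m_1}f_0)\cdot\overline{T^{m_2}f_0}\big)(T^n y)$ and the product $(T^{m_1}f_0)\cdot\overline{T^{m_2}f_0}$ is $G_k$-invariant (the two central characters cancel), this derivative descends to a continuous function on the $(k-1)$-step quotient. The key point is that for each fixed $M$ the set $\IP_{\Phi_M}$ is finite, so the inductive hypothesis may be applied termwise: the inner limit $\lim_N \E_{n\in\IP_{\Phi_N^M}}\big((T^{m_1}f_0)\overline{T^{m_2}f_0}\big)(T^n y)$ equals $\langle T^{m_1}f_0,T^{m_2}f_0\rangle$ for each of the finitely many pairs $(m_1,m_2)$, with no uniformity needed. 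Averaging and letting $M\to\infty$ turns the van der Corput bound into
\[
\limsup_{N\to\infty}\big|\E_{n\in\IP_{\Phi_N}} f_0(T^n y)\big|^2 \;\le\; \limsup_{M\to\infty}\Big\|\E_{m\in\IP_{\Phi_M}} T^m f_0\Big\|_{L^2(\nu)}^2,
\]
and Theorem~\ref{METrational} evaluates the right-hand side as $\sum_{t\neq 1}|\omega_\Phi(t)|^2\|P_t f_0\|_{L^2(\nu)}^2$, which vanishes because $P_1 f_0=\int_Y f_0\d\nu=0$ and the spectral hypothesis forces $\omega_\Phi(t)=0$ at every nontrivial eigenvalue $t$ of $T$.

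The main obstacle is that both Lemma~\ref{vdc} and Theorem~\ref{METrational} are a priori mean ($L^2$) statements, whereas the conclusion asks for convergence at every single point $y$. The device that overcomes this is to run van der Corput pointwise, with $\mathcal{H}=\C$, so that the two reductions terminate in the single scalar quantity $\limsup_M\|\E_{m\in\IP_{\Phi_M}} T^m f_0\|_{L^2(\nu)}^2$, which may safely be estimated in $L^2$; the finiteness of $\IP_{\Phi_M}$ is precisely what lets me pass the pointwise inductive hypothesis through the inner average without any equicontinuity or uniformity in the shift parameters. The remaining care is structural: one must check at each stage that the vertical-character derivative really descends to an ergodic $(k-1)$-step nilsystem inheriting the spectral condition, so that the induction closes. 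The possibly disconnected case, where $\XX$ carries rational eigenvalues coming from a finite cyclic factor, requires no separate treatment, since those eigenvalues are annihilated by the same hypothesis that $\sigma((n_j)_{j\in\mathbb{N}})$ contains no nontrivial eigenvalue of $T$.
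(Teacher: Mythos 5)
Your route is genuinely different from the paper's, and most of it is sound. The paper does not induct along the central filtration of $G$: it passes to the Host--Kra toral model, writes $X$ as a tower of abelian extensions $Z_{j+1}=Z_j\times_\rho U$ with \emph{measurable} cocycles, and invokes Lindenstrauss's theorem to replace each cocycle by a cohomologous continuous one; this is the only reason the statement involves an abstract model $\YY$, a non-identity isomorphism $\pi$, and the measure-zero condition on periodic points (a hypothesis of Lindenstrauss's theorem). Your alternative — take $\YY=\XX$, $\pi=\id$, and induct on the step using that $G_k$ is central, so the van der Corput derivative $(T^{m_1}f_0)\cdot\overline{T^{m_2}f_0}$ of a vertical character is $G_k$-invariant and descends to a \emph{continuous} function on the $(k-1)$-step nilmanifold $G/G_k\Gamma$ — eliminates the cocycle-straightening entirely, and your termwise use of the pointwise inductive hypothesis over the finitely many pairs $(m_1,m_2)\in\IP_{\Phi_M}\times\IP_{\Phi_M}$ is exactly the mechanism the paper uses. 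Two small repairs: Lemma~\ref{dense} is stated for skew products $X\times_\rho K$, whereas $G/\Gamma\rightarrow G/G_k\Gamma$ is only a principal bundle, so you should instead invoke the (equally standard) fact that the eigenfunctions of the free action of the compact group $G_k/(G_k\cap\Gamma)$ span a dense subspace of $C(G/\Gamma)$; and your minimality argument for the absence of periodic points is valid when $X$ is infinite (the finite case is degenerate for the paper's formulation as well).

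The one genuine gap is the final step. You close the argument by citing Corollary~\ref{METrational} to evaluate $\limsup_{M\rightarrow\infty}\bigl\|\E_{m\in\IP_{\Phi_M}}T^mf_0\bigr\|_{L^2(\mu)}^2$, but that result assumes $(n_j)_{j\in\N}$ has \emph{rational spectrum}, which is not among the hypotheses of Theorem~\ref{IPpointwise}; the theorem only assumes that $\sigma\bigl((n_j)_{j\in\N}\bigr)$ contains no nontrivial eigenvalue of $T_\tau$, and the two conditions are incomparable. Relatedly, the evaluation $\sum_{t\neq 1}|\omega_\Phi(t)|^2\|P_tf_0\|^2$ treats the average as if it were supported on the point spectrum, while for $k\geq 2$ a nontrivial vertical character is orthogonal to the Kronecker factor and its spectral measure is continuous (for the Anzai skew product it is Lebesgue). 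What is actually needed is
\begin{equation*}
\lim_{M\rightarrow\infty}\Bigl\|\E_{m\in\IP_{\Phi_M}}T^mf_0\Bigr\|_{L^2(\mu)}=0 ,
\end{equation*}
and by the spectral theorem the square of this norm is $\int_{\CS^1}\prod_{j\in\Phi_M}\bigl|\tfrac{1+t^{n_j}}{2}\bigr|^2\d\sigma_{f_0}(t)$, whose limsup is bounded by $\sigma_{f_0}\bigl(\sigma((n_j)_{j\in\N})\setminus\{1\}\bigr)$ (the atom at $1$ vanishes since $\int f_0\d\mu=0$). The stated hypothesis kills the atoms of $\sigma_{f_0}$ on this set (atoms sit at eigenvalues), but says nothing about its continuous part on the possibly uncountable set $\sigma((n_j)_{j\in\N})$. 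To be fair, the paper's own proof is vague at the very same point (``applying the mean ergodic theorem in $\YY$ to $f_\chi$''), and the step is harmless in the rational-spectrum regime in which the theorem is later applied, since then $\sigma((n_j)_{j\in\N})$ is countable and a continuous measure gives it no mass. So your argument is at parity with the paper's once you either add rational spectrum to the hypotheses, or supply the missing spectral input (for instance, absolute continuity of the spectral measure of functions orthogonal to the Kronecker factor of a nilsystem together with Lebesgue-nullity of $\sigma((n_j)_{j\in\N})$); as written, the appeal to Corollary~\ref{METrational} is not justified.
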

In general, the isomorphism $\pi\colon X\to Y$ need not be continuous. The hypotheses, and in particular the assumption that the $T$-periodic points have $\nu$-measure zero, is used to apply a theorem of Lindenstrauss~\cite[Theorem 3.1]{lindenstraussdistal}. 

\begin{proof}
We proceed by induction on the step $k$. When $k=1$, $\XX$ is an ergodic rotation on a compact abelian group $Z=G/\Gamma$ and we can take $\YY = \XX$ and $\pi\colon X\rightarrow Y$ to be the identity map. In this case, there are at most finitely many $T$-periodic points in $Y$. 
Let $p\colon Z\rightarrow \mathbb{C}$ be a linear combination of characters in $Z$ and write  
$$p=\sum_{i=1}^n b_i \xi_i$$ for some $b_i\in\mathbb{C}$ and $\xi_i\in\widehat{Z}$. Letting $a\in Z$ denote the rotation induced by $T_\tau$, we then have that for all $y\in Y$,
$$\lim_{N\rightarrow\infty} \E_{n\in \IP_{\Phi_N}\big((n_j)_{j\in\mathbb{N}}\big)} p(T^ny) = \sum_{i=0}^n b_i\left(\lim_{N\rightarrow\infty} \E_{n\in \IP_{\Phi_N}\bigl((n_j)_{j\in\mathbb{N}}\bigr)} \xi_i(a)^n\right)\xi_i(y).$$  
By assumption, no nontrivial eigenvalue of $Z$ lies in the spectrum of $(n_j)_{j\in\mathbb{N}}$, and so for each $i$, the limit on the right hand side is zero unless $\xi_i=0$ and in this case the limit is $1$. In both cases, the limit exists and is equal to $b_0=\int_Y p(y) \d\mu$. 
Taking $f\in C(Y)$ and $\varepsilon>0$, there exists a finite linear combination of characters $p\colon Z\rightarrow \mathbb{C}$ such that $\|f-p\|_\infty <\varepsilon/2$. 
Fixing some $y\in Y$, using this approximation of $f$ by $p$, and noting that by choice we have $X = Y$ and $T$ is the rotation induced by $a\in Z$, we have 
\begin{align*}
\limsup_{N\rightarrow\infty} & \Big|\E_{n\in\IP_{\Phi_N}} f(T^n y) - \int_Y f \d\mu \Big|\\ \leq & \limsup_{N\rightarrow\infty} \left|\E_{n\in\IP_{\Phi_N}} f(a^n y) - \E_{n\in\IP_{\Phi_N}} p(a^n y)\right| \\
& + \limsup_{N\rightarrow\infty}\Big|\E_{n\in\IP_{\Phi_N}} p(a^n y) - \int_Z p \d\mu \Big| + \Big|\int f \d\mu - \int p \d\mu\Big| \\ 
 \leq  &  2 \|f-p\|_\infty < \varepsilon.
\end{align*}
Since $\varepsilon>0$ is arbitrary, we deduce that \[
\lim_{N\rightarrow\infty} \E_{n\in \IP_{\Phi_N}\big((n_j)_{j\in\mathbb{N}}\big)} f(T^n y) = \int_Y f\d\mu, 
\] 
completing the case $k=1$. 

Fix some $k\geq 2$. 
Combining~\cite[Theorem 10.3 and Section 11]{host2005nonconventional}, since $\XX$ is an ergodic $k$-step nilsystem, it is measure theoretically isomorphic to a $k$-step toral system, meaning that we can assume that the Kronecker factor $Z_1(\XX)$ is a compact abelian Lie group and each nilfactor $Z_{j+1}(\XX)$ is an extension of $Z_j(\XX)$ by a torus for $1 \leq j < k$.   Thus we can write $X=Z_{k-1}(\XX)\times_\rho U_k$, for some torus $U_k$ and cocycle $\rho\colon Z_{k-1}(\XX)\to U_k$.
Applying the induction hypothesis to $Z_{k-1}(\XX)$, there exists a system $\YY_{k-1}=(Y_{k-1},\mathcal{C}_{k-1},\nu_{k-1},T_{k-1})$ that is isomorphic to $Z_{k-1}(\XX)$ satisfying the properties of the theorem, meaning that $\XX$ is measure theoretically isomorphic to an abelian extension of $\YY_{k-1}$ by a torus $U_k$ via a cocycle $\rho\colon Y_{k-1}\rightarrow U_k$. Using a theorem of Lindenstrauss~\cite[Theorem 3.1]{lindenstraussdistal}, $\rho$ is cohomologous to a continuous cocycle. As cohomologous cocycles give rise to isomorphic extensions, without loss of generality we may assume that $\rho$ is continuous and write $Y=Y_{k-1}\times_\rho U_k$ and let $T\colon Y\rightarrow Y$ denote the action induced by the cocycle $\rho$. Letting $P_{k-1}$ denote the set of periodic points in the system $\YY_{k-1}$, then the set of periodic points in  the system $\YY$ is a subset of $P_{k-1}\times U_k$ and is therefore of measure $0$. We are left with showing that $\YY$ satisfies the remaining properties in the statement. 

By Lemma~\ref{dense}, the space of continuous vertical characters on $\YY$ is dense with respect to the uniform norm.  Thus it suffices to prove the limit formula for functions of the form $f_\chi(y,u) = g(y)\chi(u)$ for some character $\chi\colon U_k\rightarrow \CS^1$ and  continuous function $g\colon Y_{k-1}\rightarrow \C$.
If $\chi$ is trivial, then the convergence follows from the induction hypothesis.  Otherwise, we have
\begin{align*}
\limsup_{N\rightarrow\infty}  \left| \sEIP f_\chi(T^n (y,t))\right| = \limsup_{N\rightarrow\infty}\left| \sEIP g(T_{k-1}^n y)\chi\circ\rho(n,y)f_\chi(y,t)\right|.
\end{align*}
By the van der Corput inequality (Lemma~\ref{vdc}),  we can bound the square of the right hand side by 
\begin{multline}
\label{eq:expanded}
  \limsup_{M\rightarrow\infty} \E_{m_1,m_2\in \IP_{\Phi_M}} \limsup_{N\rightarrow\infty} \E_{n\in\IP_{\Phi_N^M}} g(T_{k-1}^{n+m_1} y)\cdot  \\ \overline{g(T_{k-1}^{n+m_2}y)}\cdot  \rho(m_1,T^n y)\cdot \overline{\rho(m_2,T^n y)}, 
\end{multline}
where $\Phi_N^M= \Phi_N\backslash \Phi_M$ and we have used the cocycle identity $\rho(n+m,y) = \rho(n,y)\rho(m,T^n y)$ for all $m,n\in\mathbb{N}$ and $y\in Y$. 
Since $\rho$ and $g$ are continuous, using the inductive assumption applied to the convergence in~\eqref{eq:conv-cont}, we can rewrite the quantity in~\eqref{eq:expanded} as 
\begin{align*}
    \limsup_{M\rightarrow\infty} \E_{m_1,m_2\in \IP_{\Phi_M}} \int_{Y_{k-1}} g(T_{k-1}^{m_1}(y))&\overline{g(T_{k-1}^{m_2}y)} \rho(m_1,y)\cdot \overline{\rho(m_2,y)}\d\mu(y)\\  &=  \limsup_{M\rightarrow\infty}\left\|\E_{m\in \IP_{\Phi_M}} g(T_{k-1}^m \cdot ) \rho(m,\cdot)\right\|^2_{L^2(\YY_{k-1})}.
\end{align*}
Applying the mean ergodic theorem in $\YY$ to  the function$f_\chi$, the right hand side of this equation converges to $0$ as $M\rightarrow\infty$. 
We conclude that whenever $\chi\not = 1$, 
$$\lim_{N\rightarrow\infty} \E_{n\in\IP_{\Phi_N}\bigl((n_j)_{j\in\mathbb{N}}\bigr)} f_\chi(T^n (y,t)) =0$$ for all $(y,t)\in Y$, completing the proof.
\end{proof}

We are set to prove Theorem~\ref{formula}.
\begin{proof}
With the preliminaries adapted to the setting of  $\IP$s, we now follow the argument in~\cite{BHK}. For every $x\in G/\Gamma$, let $g_x\in G$ be a lift of $x$ and set $\tilde{\Gamma}_x = \{(g_x\gamma_1g_x^{-1},\dots,g_x\gamma_kg_x^{-1}):(\gamma_1,\dots,\gamma_k)\in \tilde{\Gamma}\}$.  We consider the nilmanifold $\tilde{X}_x = \tilde{G}/\tilde{\Gamma}_x$  endowed with the Haar measure $\tilde{\mu}_x$. 
By~\cite[Corollary 5.5]{BHK}, the action of $T_\tau^{\ell_1}\times\dots\times T_\tau^{\ell_k}$ on $\tilde{X}_x$ is ergodic for $\mu$-almost every $x\in X$. 

Let $\YY$ be as in Theorem~\ref{IPpointwise} and let $f_1,\dots,f_k\colon X\rightarrow \mathbb{C}$ be the lifts of some continuous functions $g_1,\dots,g_k\colon Y\rightarrow \mathbb{C}$ (thus $f_i = g_i\circ\pi$ where $\pi\colon \XX\rightarrow \YY$ is an isomorphism). 
By Theorem~\ref{IPpointwise} applied to the point $(x,x,\dots,x)$, we deduce that 
    $$\lim_{N\rightarrow\infty} \sEIP \prod_{i=1}^k f_i(T_\tau^{\ell_in}x)$$ converges everywhere.  By ergodicity, it must converge to the integral with respect to the Haar measure $\tilde{\mu}_x$. 
    This holds for all $f_1,\dots,f_k$ that arise as such lifts, and since the continuous functions are dense in $L^\infty(\mu_Y)$, the lifts of such functions are dense in $L^\infty(\mu_X)$. Thus, we have convergence in $L^2(\mu_X)$ to the desired limit for all $f_1,\dots,f_k\in L^\infty(\mu_X)$.
\end{proof}
\subsection{Limit formulae for weighted averages}

In Theorems~\ref{Kroneckerformula} and~\ref{formula}, we assumed that the spectrum of $(n_j)_{j\in\mathbb{N}}$ contains no nontrivial eigenvalues.  To address settings where this does not hold, we study certain weighted averages.

\begin{lemma}[Characteristic factors for multiple weighted averages.]\label{twistedcharacteristic}
Let $\XX=(X,\mathcal{B},\mu,T)$ be an ergodic system,  $(n_j)_{j\in\mathbb{N}}$ be a sequence with rational spectrum, $k\geq 2$, $f_1,\dots,f_k\in L^\infty(\XX)$,  $\ell_1, \dots,\ell_k\in \mathbb{N}$, 
and let $a\in Z_1(\XX)$ denote the rotation induced on the Kronecker factor $Z_1(\XX)$  by $T$. 
If $\ell_1,\ell_2\in \mathbb{N}$ are coprime,    $\eta\colon Z_1(\XX)\rightarrow \mathbb{R}$ is a non-negative continuous function with $\EIP \eta(na)=1$, and  $\delta>0$, there exists a $(k-1)$-step nilsystem factor $\YY = (Y,\mathcal{Y},\nu,T)$ such that 
\begin{multline*}
    \big\|\EIP\eta(na) \prod_{i=1}^k f_i(T^{\ell_in}x) - \\ \EIP\eta(na)\prod_{i=1}^k \E(f_i\mid\Y)(T^{\ell_in}x)\big\|_{L^2(\mu)}<\delta 
    \end{multline*}
    for all increasing F\o lner sequences and sufficiently large $N$.
\end{lemma}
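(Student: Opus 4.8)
The plan is to reduce the weighted average to a finite combination of \emph{unweighted} multiple averages, to which Theorem~\ref{HKcharacteristic} applies, and then to replace the resulting characteristic factor $Z_{k-1}(\XX)$ by a single finite-step nilfactor. Since $Z_1(\XX)$ is a compact abelian group and $\eta$ is continuous, I would first use Fourier analysis (Stone--Weierstrass) to choose a trigonometric polynomial $\eta_0 = \sum_{j=1}^m c_j\chi_j$, a finite linear combination of characters $\chi_j$ of $Z_1(\XX)$, with $\|\eta-\eta_0\|_\infty$ as small as desired. Writing $\alpha_j = \chi_j(a)$ for the eigenvalue attached to $\chi_j$ and evaluating along the orbit gives $\eta_0(na) = \sum_j c_j\alpha_j^n$, so the $\eta_0$-weighted average is a finite linear combination of the single-eigenvalue weighted averages $\EIP\alpha_j^n\prod_{i=1}^k f_i(T^{\ell_i n}x)$. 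The error between the $\eta$- and $\eta_0$-weighted averages is bounded pointwise by $\|\eta-\eta_0\|_\infty\prod_i\|f_i\|_\infty$ (uniformly in $N$), and the same bound holds after replacing each $f_i$ by $\E(f_i\mid\Y)$; thus it suffices to analyze the $\eta_0$-weighted average.

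The key step is an \emph{absorption trick} that exploits the coprimality of $\ell_1,\ell_2$. Choose integers $r_j,s_j$ with $r_j\ell_1+s_j\ell_2 = 1$ and let $\phi_j$ be the modulus-one eigenfunction of $\XX$ with eigenvalue $\alpha_j$ (these factor through $Z_1(\XX)$ and exist by ergodicity). Setting $\psi_j = \phi_j^{r_j}$ and $\xi_j = \phi_j^{s_j}$, the eigenfunction identities $\psi_j(T^{\ell_1 n}x) = \alpha_j^{r_j\ell_1 n}\psi_j(x)$ and $\xi_j(T^{\ell_2 n}x) = \alpha_j^{s_j\ell_2 n}\xi_j(x)$ combine to give the pointwise identity $\alpha_j^n = \psi_j(T^{\ell_1 n}x)\,\xi_j(T^{\ell_2 n}x)\,\overline{\psi_j(x)\xi_j(x)}$. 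Pulling the $n$-independent factor $\overline{\psi_j(x)\xi_j(x)}$ out of the average, the single-eigenvalue weighted average becomes $\overline{\psi_j(x)\xi_j(x)}$ times the unweighted average with $f_1,f_2$ replaced by $\psi_j f_1,\xi_j f_2$. By Theorem~\ref{HKcharacteristic}, $Z_{k-1}(\XX)$ is characteristic for this unweighted average, and since $\psi_j,\xi_j$ are $Z_{k-1}(\XX)$-measurable of modulus one, the pull-out property $\E(\psi_j f_1\mid Z_{k-1}) = \psi_j\E(f_1\mid Z_{k-1})$ shows that $\E(f_i\mid Z_{k-1}) = 0$ is equivalent to the vanishing of the corresponding modified expectation. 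Summing over $j$, I conclude that $Z_{k-1}(\XX)$ is characteristic for the $\eta_0$-weighted average, i.e.\ replacing each $f_i$ by $\E(f_i\mid Z_{k-1})$ changes it by $o(1)$ in $L^2$ as $N\to\infty$.

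Finally, I would pass from $Z_{k-1}(\XX)$, which by the Host--Kra structure theorem is an inverse limit of $(k-1)$-step nilsystems, to a single such nilfactor. Given $\delta$, choose a $(k-1)$-step nilsystem factor $\YY\le Z_{k-1}(\XX)$ so that $g_i := \E(f_i\mid Z_{k-1}) - \E(f_i\mid\Y)$ has $\|g_i\|_{L^2}$ small for every $i$. Telescoping in the index $i$ and applying the absorption trick once more reduces the change caused by this replacement to unweighted averages with a factor $g_i$ (or $\psi_j g_i$, $\xi_j g_i$) in one slot; by Proposition~\ref{seminormcontrol} these are bounded by the seminorm $\|g_i\|_{\U^k_\IP(\XX,T)}$, using that multiplication by a modulus-one eigenfunction leaves the $\IP$-seminorm unchanged (an identity following directly from Definition~\ref{def:seminorms}, since $T^{m_1}\psi_j\cdot T^{m_2}\overline{\psi_j}$ is a scalar of modulus one). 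Here the point is that for bounded functions the $\IP$-seminorm is controlled by the $L^2$ norm: combining Proposition~\ref{measurecontrol} with H\"older's inequality on the probability measure $\tilde\mu^{[k]}$ (whose coordinate marginals are all $\mu$) and interpolation gives $\|g\|_{\U^k_\IP}^{2^k}\le\|g\|_\infty^{2^k-2}\|g\|_{L^2}^2$, so $\|g_i\|_{L^2}\to0$ forces $\|g_i\|_{\U^k_\IP}\to0$. Chaining the four error terms---two weight-replacement errors bounded by $\|\eta-\eta_0\|_\infty$, the characteristic-factor error vanishing as $N\to\infty$, and the nilfactor-approximation error controlled by the seminorm---and choosing the approximations fine enough yields the claim for all sufficiently large $N$, uniformly over increasing F\o lner sequences since every seminorm bound is itself a supremum over $\Phi$. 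The main obstacle I anticipate is precisely this last uniformity: ensuring that the nilfactor approximation is effective simultaneously for all increasing F\o lner sequences, which is what forces the passage through the $\IP$-seminorms and the $L^2$-to-seminorm comparison above, rather than a direct limit-formula argument.
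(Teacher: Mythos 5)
Your proposal is correct, and its skeleton coincides with the paper's proof: uniform approximation of $\eta$ by a trigonometric polynomial on $Z_1(\XX)$ via Stone--Weierstrass, absorption of each character weight into the first two functions using B\'ezout coefficients for the coprime pair $\ell_1,\ell_2$ (your identity $\alpha_j^n=\psi_j(T^{\ell_1n}x)\,\xi_j(T^{\ell_2n}x)\,\overline{\psi_j(x)\xi_j(x)}$ is literally the paper's substitution $f_1'=f_1\tilde\eta^{s}$, $f_2'=f_2\tilde\eta^{t}$ with $\tilde\eta=\chi_j\circ\pi$), an appeal to Theorem~\ref{HKcharacteristic}, and the Host--Kra structure theorem to select the $(k-1)$-step nilsystem factor $\YY$. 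The one place you genuinely diverge is the final error estimate. The paper replaces $\E(f_i\mid Z_{k-1}(\XX))$ by $\E(f_i\mid\Y)$ with a direct telescoping/H\"older bound: since each $g_i=\E(f_i\mid Z_{k-1}(\XX))-\E(f_i\mid\Y)$ is small in $L^2(\mu)$ and all other factors are bounded, the difference of the two products is less than $\delta/2$ in $L^2(\mu)$ \emph{pointwise in $n$}, and averaging against the nonnegative weights $\eta(na)$, whose averages tend to $1$, finishes the proof. Your detour through Proposition~\ref{seminormcontrol} together with the comparison $\|g\|_{\U^k_{\IP}(\XX,T)}^{2^k}\le\|g\|_\infty^{2^k-2}\|g\|_{L^2(\mu)}^2$ (which does follow from Proposition~\ref{measurecontrol}, Cauchy--Schwarz, and the fact that every coordinate marginal of $\tilde\mu^{[k]}$ is $\mu$) is correct but heavier than needed, and the motivation you give for it is misplaced: the direct $L^2$ bound is already uniform in $n$ and in the F\o lner sequence, so no seminorm machinery is required for that step. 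Moreover, the genuinely non-uniform step in both arguments is the characteristic-factor replacement itself, where Theorem~\ref{HKcharacteristic} (equivalently, the limsup bound of Proposition~\ref{seminormcontrol}) controls only a limit for each fixed $\Phi$; your seminorm route does not remove that dependence, so ``sufficiently large $N$'' depends on the F\o lner sequence in your proof exactly as it does in the paper's, and your closing claim of uniformity over all increasing F\o lner sequences is overstated (though harmless for the lemma as stated).
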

\begin{proof}
We start with the case that $\eta$ is a character. Since $\ell_1,\ell_2$ are coprime, we can choose  $s,t\in \mathbb{Z}$ such that $s\cdot \ell_1 +t\cdot \ell_2 = 1$. Letting $\pi\colon X\to Z_1(\XX)$ denote the factor map, set $\tilde{\eta} =\eta\circ \pi$ and write $f_1' = f_1 \cdot \tilde{\eta}^s$, $f_2' = f_2\cdot \tilde{\eta}^t$, and  $f_i'=f_i$ for all $i\geq 3$. Then $$\EIP\eta(na) \prod_{i=1}^k f_i(T^{\ell_in}x) = \EIP \prod_{i=1}^k f'_i(T^{\ell_in}x).$$
By  Theorem~\ref{HKcharacteristic}, this limit is zero if at least one function  $f_i'$ is orthogonal to $Z_{k-1}(\XX)$. Since $\eta$ is measurable with respect to $Z_1(\XX)$, the function $f_i'$ is orthogonal to the factor $Z_{k-1}(\XX)$ if and only if $f_i$ is orthogonal to $Z_1(\XX)$. 
For general $\eta$, using the Stone-Weierstrass Theorem we can approximate $\eta$ uniformly by linear combinations of characters of $Z_1(\XX)$. Thus, we may assume without loss of generality that all $f_i$ are all measurable with respect to $Z_{k-1}(\XX)$. 

Let $\delta>0$. Since the factor $Z_{k-1}(\XX)$ is an inverse limit of nilsystems~\cite{host2005nonconventional, ziegler2007universal}, there is a $(k-1)$-step nilsystem $\YY$ such that $\left\|f_i - \E(f_i\mid Y)\right\|<\delta/2k$. By the Cauchy-Schwartz and triangle inequalities, we have that
\[\left\|\prod_{i=1}^k f_i\circ T^{\ell_in}  - \prod_{i=1}^k \E(f_i\mid \Y)\circ T^{\ell_i n}\right\|_{L^2(\mu)}<\delta/2. 
\] Since $\lim_{N\rightarrow\infty}\sEIP \eta(na)=1$, it follows that for $N$ sufficiently large we have that
\begin{multline*}
    \big\|\sEIP \eta(na) \prod_{i=1}^k f_i(T^{\ell_i n}(x)) - \sEIP \eta(na) \prod_{i=1}^k \E(f_i\mid \Y)(T^{\ell_i n}(x))\big\|_{L^2(\mu)} \\ < \sEIP \eta(na)\cdot \delta/2\leq \delta. \quad\qedhere
    \end{multline*}
\end{proof}

We make use of various  results about nilmanifolds, and refer to~\cite{hk-book} for further details and background. For an ergodic nilsystem $\XX=(G/\Gamma,\mathcal{B},\mu,T_\tau)$, we let  $G^0$ denote the connected component of $G$ and set $\Gamma^0 = G^0\cap \Gamma$. 
The nilmanifold $G^0/\Gamma^0$ can be embedded as a subnilmanifold of  $G/\Gamma$, and since the quotient $G/G^0\Gamma$ is finite, there is some $r\in \mathbb{N}$ such that $\tau^r$ maps $G^0/\Gamma^0$ to itself and we have
$$G/\Gamma = \bigsqcup_{i=0}^{r-1} \tau^i G^0/\Gamma^0.$$
Let $\XX^0$ denote the associated system (endowed with the Borel $\sigma$-algebra, Haar measure $\mu^0$, and transformation $T_\tau^r = T_{\tau^r}$). 
 
\begin{definition}
    An ergodic nilsystem $\XX=(G/\Gamma,\mathcal{B},\mu,T_\tau)$ is \emph{synchronized on $G^0/\Gamma^0$}, or \emph{synchronized} if the context is clear, if there exist some $b\in G$ and $r\in\N$ such that
    \begin{enumerate}
        \item
        \label{item:goodnil-one}
        $T_b\colon G/\Gamma\rightarrow G/\Gamma$ maps $G^0/\Gamma^0$ to itself; 
        \item
         \label{item:goodnil-two}
        The action of $T_b$ on $G^0/\Gamma^0$ is ergodic; 
        \item
         \label{item:goodnil-three}
         The elements $b, \tau\in G$ satisfy 
         $b^r=\tau^r$.
    \end{enumerate}
\end{definition}
We note that for general nilsystems,  property~\eqref{item:goodnil-one} does not imply that $b\in G^0$. For instance, any totally ergodic  nilsystem  $\XX$ is synchronized, as then  $X=G^0/\Gamma^0$ and we can take $b=\tau$ and $r=1$. Moreover, all ergodic $1$-step nilsystems are synchronized, as seen by projecting the rotation $\tau$ of  a compact abelian Lie group $Z$ to the torus part of $Z$.  
We show that this extends to $2$-step nilsystems.  
\begin{lemma}\label{goodlemma}
Let $k=1$ or $k=2$. Then every ergodic $k$-step nilsystem $\XX=(G/\Gamma,\mathcal{B},\mu,T_\tau)$ is synchronized on $G^0/\Gamma^0$ for some choice of $G,\Gamma$.
\end{lemma}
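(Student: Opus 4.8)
The plan is to reduce the three defining conditions to a single algebraic statement and then to solve it using the structure of $2$-step nilmanifolds. The case $k=1$ is exactly the remark preceding the lemma, so I assume $k=2$.

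First I would record a reduction that makes condition~\eqref{item:goodnil-two} free. Since $T_\tau$ permutes the $r$ clopen pieces $\tau^i(G^0/\Gamma^0)$ cyclically, the assignment of a point to its piece is a factor of $\XX$ isomorphic to the rotation on $\Z/r\Z$; as $\XX$ is ergodic, the first-return map $T_{\tau^r}=T_\tau^r$ to $G^0/\Gamma^0$ is ergodic. Now if $b\in G^0\Gamma$ (which is precisely condition~\eqref{item:goodnil-one}, i.e.\ $T_b$ preserves $G^0/\Gamma^0$) and $b^r=\tau^r$ (condition~\eqref{item:goodnil-three}), then $T_b^r=T_{b^r}=T_{\tau^r}$ is ergodic on $G^0/\Gamma^0$, and a measure preserving transformation whose $r$-th power is ergodic is itself ergodic. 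Hence condition~\eqref{item:goodnil-two} is automatic, and it suffices to produce some $b\in G^0\Gamma$ with $b^r=\tau^r$.

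Next I would normalize the representation. Replacing $G$ by $\langle G^0,\tau\rangle$ does not change the system: ergodicity makes the orbit of the base point dense, so $\langle G^0,\tau\rangle\Gamma$ is an open, hence closed, dense subgroup and therefore equals $G$, and the system is carried by $(\langle G^0,\tau\rangle,\langle G^0,\tau\rangle\cap\Gamma)$. After this reduction $G/G^0$ is cyclic, so $G_2=[G,G]\subseteq G^0$; as $k=2$ the subgroup $G_2$ is central, and being generated by the connected sets $[G^0,G^0]$ and $\{[g,\tau]:g\in G^0\}$ it is a connected central subgroup of $G^0$. The goal is now to arrange $\tau^r\in G^0$, i.e.\ to make the component group $G/G^0$ finite of order $r$. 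Granting this, $\tau^r$ lies in the connected nilpotent Lie group $G^0$, which is divisible (it is a quotient of the simply connected $\exp\mathfrak g$ by a discrete central subgroup), so $\tau^r$ has an $r$-th root $b\in G^0\subseteq G^0\Gamma$; this $b$ satisfies conditions~\eqref{item:goodnil-one} and~\eqref{item:goodnil-three}, and~\eqref{item:goodnil-two} follows from the first paragraph.

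The main obstacle is therefore the reduction $\tau^r\in G^0$, equivalently passing to a representation with finitely many connected components. Writing $\tau^r=c\gamma$ with $c\in G^0$ and $\gamma\in\Gamma$, the difficulty is exactly the ``monodromy'' element $\gamma$: in the cyclic quotient $G/G^0$ the class of $\tau^r$ equals the class of $\gamma$, and unless $\gamma$ can be absorbed this class has infinite order, so no $r$-th root lands in $G^0\Gamma$. Here is where the $2$-step hypothesis is essential: since $G_2$ is central, conjugation by $\gamma$ is a unipotent automorphism of $G^0$ shifting only by central commutators, and the identity $(xz)^r=x^rz^r$ for central $z$ lets one modify the lattice (using the structure theory of $2$-step nilmanifolds, cf.~\cite{hk-book}) so as to trivialize this monodromy and obtain $\tau^r\in G^0$. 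For $k\ge 3$ the commutator subgroup is no longer central, the monodromy automorphism need not be unipotent nor absorbable, and this construction breaks down, which is why the statement is confined to $k\le 2$.
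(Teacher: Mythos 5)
Your first two reductions are sound: with $r$ taken to be the number of connected components, conditions~\eqref{item:goodnil-one} and~\eqref{item:goodnil-three} do force condition~\eqref{item:goodnil-two}, and replacing $G$ by $\langle G^0,\tau\rangle$ is harmless. The proof collapses at the step you defer to ``modifying the lattice'': arranging $\tau^r\in G^0$ is not a technicality that structure theory fills in --- it is impossible in general, so the whole strategy of taking $b$ to be an $r$-th root of $\tau^r$ inside the divisible group $G^0$ cannot be repaired. Concretely, consider the ergodic $2$-step nilsystem $X=\T^2\times(\Z/2\Z)$ with $T(x,y,j)=(x+\alpha,y+x,j+1)$, $\alpha$ irrational, realized by $G=\R^2\rtimes\Z$, $\Gamma=\Z^2\rtimes 2\Z$ (the generator of $\Z$ acting by $(x,y)\mapsto(x,y+x)$). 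Its only nontrivial rational eigenvalue is $-1$, so every nilmanifold presentation of it has exactly two connected components exchanged by $T$: a presentation with $d$ components produces the rational eigenvalue $e^{2\pi i/d}$, and a connected presentation would have torsion-free eigenvalue group, contradicting the eigenvalue $-1$. Hence if some presentation had $\tau^{r}\in G^{0}$, then $r$ would be even and $T^{r}$ restricted to the identity component would be measurably isomorphic to $S^{r/2}$, where $S(x,y)=(x+2\alpha,y+2x+\alpha)$ is the first-return map. Now apply the very fact you invoke, divisibility of connected nilpotent Lie groups: for every $n$ there would be $c\in G^{0}$ with $c^{n}=\tau^{r}$, so $S^{r/2}$ would admit a measurable $n$-th root for every $n$. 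But $S^{r/2}$ is totally ergodic with quasi-discrete spectrum, so by Abramov's theorem every measure-preserving transformation commuting with it --- in particular every root --- is affine with an integer skew coefficient, and one checks that the $n$-th power of such a map has skew coefficient divisible by $n$; since $S^{r/2}$ has skew coefficient $r$, it has no $n$-th root unless $n\mid r$. Contradiction. (A secondary point: in your third paragraph $r=[G:G^0]$ may strictly exceed the number of components $d$, in which case $T_b^r$ is only the $(r/d)$-th power of the return map and your first paragraph no longer yields condition~\eqref{item:goodnil-two} automatically.)

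This impossibility is exactly what the paper's remark after the definition of synchronized is warning about: property~\eqref{item:goodnil-one} does not imply $b\in G^0$, and in general it cannot --- in the example above, any valid $b$ must act on each component as the skew map $(x,y,j)\mapsto(x+\alpha,y+x,j)$, which in a suitable presentation is translation by an element of $G^0\Gamma\setminus G^0$ but is never translation by an element of a connected acting group. Accordingly, the paper's proof does not look for roots in the identity component at all: it re-presents the system via the Host--Kra group as $Z\times_\rho U$ with $Z=U_1\times K$, $K$ finite cyclic, proves the cocycle classification Lemma~\ref{type2classification} (a cocycle of type $2$ is cohomologous to a $K$-invariant cocycle $\rho^0$ plus a homomorphism $\phi\colon K\to U$), and takes $b=S_{((a_1,0),\rho^0)}$, a skew transformation fixing each component and satisfying $b^r=\tau^r$ because $\phi$ and the rotation on $K$ vanish after $r$ steps. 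The $2$-step hypothesis enters through this cocycle analysis, not through centrality of $G_2$ making the monodromy ``absorbable,'' so your closing diagnosis of why $k\le 2$ is needed is also off the mark.
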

The proof is technical and relies on results from~\cite{host2005nonconventional} not directly relevant to this paper, and so we defer it to Appendix~\ref{goodproof}.
While it is not needed for our current work, it would be interesting to know if all nilsystems are synchronized.

For  $r\in\mathbb{N}$, let 
$\one_{r\N}\colon \mathbb{N}\rightarrow \{0,1\}$ denote the function which assigns $1$ to any integer divisible by $r$ and $0$ otherwise. Given a F\o lner sequence $\Phi=(\Phi_N)_{N\in\mathbb{N}}$, set
$$\one_{r,\Phi}(n) = \frac{\one_{r\N}(n)}{\lim_{N\rightarrow\infty}\EIP \one_{r\N}(n)}.$$  This is well defined: the limit in the denominator exists because  $\one_{r\N}(n) = \frac{1}{r}\sum_{i=0}^{r-1}\xi_r^n$, where $\xi_r$ is the first root of unity of degree $r$, and so is a periodic function. 

\begin{proposition}
\label{twistedpointwise}
    Let $\XX=(G/\Gamma,\mathcal{B},\mu,T_\tau)$ be a nilsystem and suppose that $\XX$ is synchronized on $G^0/\Gamma^0$. Let $(n_j)_{j\in\mathbb{N}}$ be a sequence with rational spectrum and let $\Phi=(\Phi_N)_{N\in\mathbb{N}}$ be an increasing F\o lner sequence. Then
    there exists some $r\in\mathbb{N}$ and
   a  set of bounded functions $\CF$ that are dense in $L^2(\mu)$ such that  for all $f\in\CF$, we have $$\lim_{N\rightarrow\infty} \EIP \bm{1}_{r,\Phi}(n) f(\tau^nx) = \int_{X^0} f(xy) \d\mu^0(y\Gamma^0)$$ for $\mu_G$-almost every $x\in G$, where $X^0 = G^0/\Gamma^0$ is the connected component of the identity in $X$ and is endowed with the Haar measure $\mu^0$.
\end{proposition}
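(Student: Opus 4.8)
The plan is to use synchronization to collapse the problem onto the connected component $X^0=G^0/\Gamma^0$, where the restriction to multiples of $r$ turns the weighted average into an equidistribution statement for the totally ergodic rotation $T_b$, and then to establish the required pointwise limit by induction on the nilpotency step. First I would invoke that $\XX$ is synchronized: fix $b\in G$ and $r\in\N$ with $T_b$ preserving $X^0$, with $T_b$ ergodic on $X^0$, and with $b^r=\tau^r$. The weight $\one_{r,\Phi}(n)$ vanishes off $r\N$, and for $n=rm$ one has $\tau^n=(\tau^r)^m=(b^r)^m=b^n$. Using the partition $G/\Gamma=\bigsqcup_{i=0}^{r-1}\tau^iX^0$, almost every $x$ is written as $x=\tau^{i_0}x_0$ with $x_0\in X^0$, so that $f(\tau^nx)=g(b^nx_0)$ on $r\N$, where $g:=f(\tau^{i_0}\,\cdot\,)$ on $X^0$; by left-invariance of the Haar measure $\mu^0$ the claimed target $\int_{X^0}f(xy)\,\d\mu^0(y\Gamma^0)$ equals $\int_{X^0}g\,\d\mu^0$. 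Thus it suffices to prove a weighted pointwise limit for $(X^0,T_b)$.

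Next I would record the spectral input. An ergodic rotation of a connected nilmanifold is totally ergodic and its Kronecker factor is a torus carrying only nontrivial \emph{irrational} eigenvalues; since $(n_j)_{j\in\N}$ has rational spectrum, $\sigma\bigl((n_j)_{j\in\N}\bigr)$ meets the nontrivial spectrum of $(X^0,T_b)$ only at $1$. Hence the disjoint-spectrum hypothesis of Theorem~\ref{IPpointwise} holds for $(X^0,T_b)$ and for every nilfactor $Z_j(X^0)$, producing a continuous model and genuinely pointwise \emph{unweighted} $\IP$ convergence for a dense class; I would build $\CF$ from pullbacks of continuous functions on this model, noting density in $L^2(\mu)$.

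The crux is incorporating the weight. Expanding $\one_{r\N}(n)=\tfrac1r\sum_{s=0}^{r-1}\xi_r^{sn}$, I reduce to proving, for each root of unity $\xi$ and $g$ in the dense class, the pointwise limit $(\dagger)$: $\EIP \xi^{n} g(b^ny)\to \omega_{\Phi}(\xi)\int_{X^0}g\,\d\mu^0$, which I prove by induction on the step. In the base case a single eigenfunction $g$ with eigenvalue $\beta$ gives $\EIP \xi^n g(b^ny)=\bigl(\EIP (\xi\beta)^n\bigr)g(y)\to\omega_{\Phi}(\xi\beta)\,g(y)$, and $\omega_{\Phi}(\xi\beta)=\omega_{\Phi}(\xi)\int g$ because $\xi\beta$ is rational exactly when $\beta=1$ (so either $\beta=1$ and $\int g=g(y)$, or $\beta$ is irrational, $\xi\beta\notin\sigma((n_j)_{j\in\N})$, and both sides vanish); one then extends to continuous $g$ by the Stone–Weierstrass estimate $|\EIP\xi^n(g-p)(b^ny)|\le\|g-p\|_\infty$. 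For the inductive step I write $X^0=Z\times_\rho U$, make $\rho$ continuous via Lindenstrauss, and use Lemma~\ref{dense} to reduce to $g=h\cdot(\chi\circ\mathrm{proj})$ for a vertical character $\chi$; a trivial $\chi$ falls to the inductive hypothesis on $Z$. For nontrivial $\chi$ both sides of $(\dagger)$ should be $0$, and I apply the van der Corput Lemma~\ref{vdc} to $x_n=\xi^ng(b^ny)$. The key observation is that the weight \emph{decouples}: in $x_{n+m_1}\overline{x_{n+m_2}}$ the factor $\xi^{n}$ cancels, leaving a bounded $\xi^{m_1-m_2}$, so the inner $n$-average is unweighted and converges pointwise by the $\xi=1$ case of the inductive hypothesis on $Z$ (the disjoint-spectrum pointwise theorem). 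The resulting bound is $\limsup_{M}\bigl\|\E_{m\in\IP_{\Phi_M}}\xi^{m}g(b^m\cdot)\bigr\|_{L^2(X^0)}^2$; since $\xi T_b$ is a unitary whose spectral measure is that of $T_b$ rotated by $\xi$, the spectral argument behind Corollary~\ref{METrational} identifies this mean limit with $\sum_{t\in\sigma((n_j)_{j\in\N})}\omega_{\Phi}(t)\,P^{T_b}_{\bar\xi t}g$, and this vanishes because $g$ has a nontrivial vertical character and is therefore orthogonal to $Z_1(X^0)$, hence to every eigenfunction of $T_b$. This closes the induction and yields $(\dagger)$.

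Finally, summing $(\dagger)$ over $s$ gives $\EIP\one_{r\N}(n)g(b^ny)\to\tfrac1r\sum_{s}\omega_{\Phi}(\xi_r^s)\int g=c_r\int g$ with $c_r=\lim_{N\to\infty}\EIP\one_{r\N}$, so dividing by $c_r$ produces the stated limit for almost every $x$. I expect the main obstacle to be exactly the inductive step above: upgrading mean convergence to \emph{pointwise} convergence in the presence of a rational eigenvalue $\xi$ that may lie in $\sigma((n_j)_{j\in\N})$ and is therefore \emph{not} annihilated, a situation outside the scope of Theorem~\ref{IPpointwise}. The delicate point is arranging the van der Corput reduction so that the two halves are simultaneously controllable — the inner average becomes genuinely unweighted (falling under the disjoint-spectrum pointwise theorem on $Z$, for the correct base point), while the outer average over $m_1,m_2$ is killed by the twisted mean ergodic theorem via the orthogonality $P^{T_b}_{s}g=0$ — all within the continuous cocycle model furnished by Lindenstrauss.
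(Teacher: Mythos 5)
Your proposal is correct and is essentially the paper's own argument: synchronization reduces the problem to $(X^0,T_b)$, the weight $\one_{r,\Phi}$ is expanded into the characters $\xi_r^{in}$, and the twisted pointwise limit is proved by induction on the step using the Lindenstrauss continuous-cocycle model, Lemma~\ref{dense}, and the van der Corput Lemma~\ref{vdc}, with the twist cancelling in the inner average and the outer $m$-average killed by the twisted mean ergodic theorem since a nontrivial vertical character is orthogonal to the Kronecker factor. The only differences are presentational (you normalize by $\omega_\Phi(\xi)$ and divide by $c_r$ at the end, and you spell out the spectral identification for $\xi T_b$ slightly more explicitly than the paper does).
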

\begin{proof}
Since the system $\XX$ is synchronized, we can find some $r$ and $\tau\in G$ such that $\tau^r=b^r$ and $b$ maps the connected component of $X$ to itself. 
Since $\bm{1}_{r,\Phi}(n)\not=0$ if and only if $\tau^n=b^n$, 
it suffices to compute the limit
$$\lim_{N\rightarrow\infty}\EIP \bm{1}_{r,\Phi}(n) f(b^nx).$$

Decompose $X = \bigsqcup_{i=0}^{r'-1} X_i$ into connected components and note that by increasing $r$ if necessary we may assume without loss of generality that $r'$ divides $r$. The translation by $b$ induces an ergodic action on each $X_i$.  Thus the function $f$ can be written as a sum of functions on each of these components. 
Therefore, without loss of generality, it suffices to find a dense set of bounded functions $f\colon X^0\rightarrow \mathbb{C}$ such that
\begin{equation}
    \label{eq:dense-f}
    \lim_{N\rightarrow\infty}\sEIP \bm{1}_{r,\Phi}(n) f(b^nx)= \int_{X^0} f(x) \d\mu^0(x).
    \end{equation}

We proceed by induction on $k$. 
For $k=1$, the system $\XX^0=(X^0,\CB_{X^0}, \mu^0, R_b)$ is an ergodic rotation on a connected compact abelian group $Z$. 
Linear combinations of characters are dense in $C(Z)$, and so it suffices to prove the result for a character $f$.  
If $f=1$, then it follows from the definition of $\bm{1}_{r,\Phi}$ that the limit in~\eqref{eq:dense-f} is $1$. Otherwise, since $X^0$ is connected, $f(b^nx) = \alpha^n f(x)$ for some irrational $\alpha$. 
On the other hand $\bm{1}_{r,\Phi}(n) = c\cdot \sum_{i=0}^{r-1}\xi_r^{in} $ where $\xi_r$ is the first root of unity of degree $r$ and $c\in\mathbb{R}$ is a constant. 
We have
$$\lim_{N\rightarrow\infty}\sEIP \bm{1}_{r,\Phi}(n) f(b^nx) =c\sum_{i=0}^{r-1} \omega_{\Phi}(\xi_r^i\cdot \alpha)\cdot f(x)$$
where $\omega_{\Phi}$ is defined as in~\eqref{omega}. However, since $\xi_r^i \alpha$ is irrational, and $(n_j)_{j\in\mathbb{N}}$ has rational spectrum this quantity is zero, completing the case $k=1$. 

To proceed inductively, we use a similar argument to that in the proof of Theorem~\ref{IPpointwise}. Namely, the system $\XX^0$ is isomorphic to a system $\YY$ and $Y= Y_{k-1}\times_\rho U$ for some connected group $U$ and a continuous cocycle $\rho$.  We then consider a nilcharacter $f_{\chi}(y,t) = g(y)\chi(t)$, where $\chi\colon U\rightarrow \CS^1$ is a nontrivial character and $g\colon Y_{k-1}\rightarrow\mathbb{C}$ is continuous. Once again write $\bm{1}_{r,\Phi}(n)=c\cdot \sum_{i=0}^{r-1} \xi_r^{in}$, fix some $0\leq i \leq r-1$, and set $s=\xi_r^i$. 
Using the same notation as in the proof of Theorem~\ref{IPpointwise} (so $T_{k-1}$ denotes the action on $\YY_{k-1}$ and $T$ the action on $\YY$), we have 
\begin{align*}
\limsup_{N\rightarrow\infty} &\left|\sEIP   s^n\cdot f_\chi(T^n(y,t))\right|\\&= \limsup_{N\rightarrow\infty}\left|\sEIP s^n g(T_{k-1}^n y)\chi\circ\rho(n,y) f_\chi(y,t)\right|.
\end{align*}
By the van der Corput inequality (Lemma~\ref{vdc}) and the induction hypothesis, this is bounded by
$$\limsup_{M\rightarrow\infty}\|\E_{m\in\IP_{\Phi_M}} s^mg(T_{k-1}^my)\rho(m,y)\|_{L^2(\mu_{k-1})}^2 =0$$
Since $X^0$ is connected, it is totally ergodic and so the spectrum is supported on the irrational elements in $\CS^1$. Therefore, we have the $L^2$-convergence of the average $\lim_{N\rightarrow\infty}\sEIP s^n f_{\chi}(T^n(y,t))=0$ for all rational $s\in \CS^1$Th, completing the proof. 
\end{proof}

As a corollary, we obtain a formula for weighted averages on synchronized nilsystems. 
\begin{corollary}
\label{twistedformulanilsystem}
Let $\XX=(G/\Gamma,\mathcal{B},\mu,T_\tau)$ be an ergodic $k$-step nilsystem that is synchronized on $G^0/\Gamma^0$. Let $\eta\colon Z_1(\XX)\rightarrow \mathbb{R}$ be a non-negative continuous function such that  $\lim_{N\rightarrow\infty}\EIP \eta(na)=1$ where $a$ denotes the rotation induced on the Kronecker factor $Z_1(\XX)$, let $f_1,\dots,f_k\in L^\infty(\mu)$ be bounded functions, and let $\ell_1,\dots,\ell_k\in\mathbb{N}$ be distinct integers such that $\ell_1,\ell_2$ are coprime. Then there exists $r\in\N$ such that for every increasing F\o lner sequence $\Phi=(\Phi_N)_{N\in\mathbb{N}}$, we have
    \begin{multline*}
        \lim_{N\rightarrow\infty}\EIP \eta(na) \bm{1}_{r,\Phi}(n) \prod_{i=1}^k f_i(T^{\ell_in}x)  \\  = \int_{\tilde{X}^0} \eta(y_1)\prod_{i=1}^k f_i(xy_i)~\d\tilde{\mu}(y\tilde{\Gamma}_{\ell_1,\dots,\ell_k}), \end{multline*}
where $\tilde{X}^0$ denotes the connected component of $\tilde{X}$ and $\tilde{\mu}^0$ the Haar measure on $\tilde{X}^0$.
\end{corollary}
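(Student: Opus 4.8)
The strategy is to adapt the proof of Theorem~\ref{formula}, carrying the weight $\eta(na)\bm{1}_{r,\Phi}(n)$ through the argument and replacing the unweighted everywhere-convergence of Theorem~\ref{IPpointwise} by its weighted, synchronized analogue, Proposition~\ref{twistedpointwise}. By the Stone--Weierstrass theorem and the uniform boundedness of the averages, it suffices to treat $\eta=\chi$ for a character $\chi$ of $Z_1(\XX)$. I would then absorb this weight into the functions exactly as in the first step of the proof of Lemma~\ref{twistedcharacteristic}: using coprimality of $\ell_1,\ell_2$, pick $s,t\in\mathbb{Z}$ with $s\ell_1+t\ell_2=1$, set $\tilde\chi=\chi\circ\pi$ for the Kronecker factor map $\pi\colon X\to Z_1(\XX)$, and replace $f_1,f_2$ by $f_1'=f_1\tilde\chi^{\,s}$ and $f_2'=f_2\tilde\chi^{\,t}$ (with $f_i'=f_i$ for $i\geq 3$). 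Then $\chi(na)\prod_i f_i(T^{\ell_i n}x)$ agrees with $\prod_i f_i'(T^{\ell_i n}x)$ up to a factor depending only on $\pi(x)$, which reduces the task to evaluating the $\bm{1}_{r,\Phi}$-weighted multiple average of the $f_i'$.

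Next I would realize this multiple average as a single weighted average on a product nilsystem, as in Theorem~\ref{formula}. For $\mu$-almost every $x$ I form $\tilde\Gamma_x=g_x\tilde\Gamma g_x^{-1}$, the nilmanifold $\tilde X_x=\tilde G/\tilde\Gamma_x$, and the diagonal element $\tilde\tau=(\tau^{\ell_1},\dots,\tau^{\ell_k})\in\tilde G$; by~\cite[Corollary 5.5]{BHK} the induced action is ergodic for almost every $x$. The new ingredient is that synchronization passes to the product: if $b,r$ synchronize $\XX$ then $\tilde b=(b^{\ell_1},\dots,b^{\ell_k})$ satisfies $\tilde b^{\,r}=\tilde\tau^{\,r}$, preserves the connected component $\tilde X_x^0$, and, again for almost every $x$, acts ergodically on it (by applying~\cite[Corollary 5.5]{BHK} to the ergodic rotation of $G^0/\Gamma^0$ by $b$), so $\tilde X_x$ is synchronized with the same $r$. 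Setting $F(y_1,\dots,y_k)=\prod_{i=1}^k f_i'(y_i)$, the weighted multiple average equals $\EIP\bm{1}_{r,\Phi}(n)\,F(\tilde\tau^{\,n}\tilde x)$ evaluated at the diagonal point $\tilde x=(x,\dots,x)$.

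I would then apply Proposition~\ref{twistedpointwise} to the synchronized nilsystem $\tilde X_x$. As in Theorem~\ref{formula}, the decisive feature is that the convergence it provides is genuinely everywhere on the metric model furnished by Theorem~\ref{IPpointwise} (its proof only invokes the everywhere statement of Theorem~\ref{IPpointwise} together with the van der Corput Lemma~\ref{vdc}), so it may be evaluated at the single diagonal point rather than merely almost everywhere; together with ergodicity of the diagonal action this identifies the limit as the integral of $F$ over $\tilde X_x^0$. Taking each $f_i$ to be the lift of a continuous function on the model, so that $F$ lies in the dense class to which Proposition~\ref{twistedpointwise} applies, and unwinding the conjugation in $\tilde\Gamma_x$, rewrites this as $\int_{\tilde X^0}\prod_i f_i'(xy_i)\,\d\tilde\mu^0(y\tilde\Gamma)$. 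A short Fourier/Bézout computation—using that the Kronecker projection of $y_j$ is the $\ell_j$-th power of that of the first generator together with $s\ell_1+t\ell_2=1$—then reorganizes the absorbed factor into the weight $\eta(y_1)$ of the statement. Finally, since the functions treated are dense in $L^\infty(\mu)$ and all averages are uniformly bounded, a standard approximation argument upgrades everywhere convergence on the dense class to $L^2(\mu)$ convergence for arbitrary $f_1,\dots,f_k\in L^\infty(\mu)$.

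The main obstacle is the same as in Theorem~\ref{formula}: obtaining convergence at the measure-zero diagonal point of $\tilde X_x$, which forbids appealing to any merely almost-everywhere statement and forces one to track that Proposition~\ref{twistedpointwise} delivers convergence at every point of the model. The two supporting technical points are the transfer of synchronization to the product nilsystem and the bookkeeping that turns the absorbed character into the clean weight $\eta(y_1)$ in the limiting integral.
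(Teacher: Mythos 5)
Your proposal is correct and follows essentially the same route as the paper: absorb the weight $\eta$ into $f_1,f_2$ via B\'ezout and coprimality of $\ell_1,\ell_2$ (exactly the reduction in Lemma~\ref{twistedcharacteristic}), reduce to the connected component, and then rerun the product-nilmanifold argument of Theorem~\ref{formula} with Proposition~\ref{twistedpointwise} substituted for Theorem~\ref{IPpointwise}. The paper's proof is terser and leaves implicit the two points you spell out (that synchronization passes to $\tilde G$ with the same $r$, and that the pointwise convergence of Proposition~\ref{twistedpointwise} is available at the diagonal point of the model), so your write-up is a faithful, more detailed version of the intended argument rather than a different one.
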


\begin{proof}
As in the proof of Lemma~\ref{twistedcharacteristic}, we can absorb $\eta$ into $f_1,f_2$ and so without loss of generality  we assume that $\eta=1$. As in the proof of Proposition~\ref{twistedpointwise}, we can reduce to the case that the functions are defined on the connected component of the identity (note that all of these component are isomorphic). Using the proof of Theorem~\ref{formula}, the limit is obtained by applying a pointwise convergence result for an ergodic average of a single function on a particular nilsystem. We  therefore obtain the required formula using the same proof as in Theorem~\ref{formula}, only replacing the use of Theorem~\ref{IPpointwise} by Proposition~\ref{twistedpointwise}.
\end{proof}

\section{The large intersection property}\label{large:sec}

\subsection{Notions of largeness}
Depending on the choice of sequence, different notions of size are relevant. 
\begin{definition}[IP-density]\label{IP-density:def}
    Let $(n_j)_{j\in\mathbb{N}}$ be a sequence of natural numbers, let $\Phi=(\Phi_N)_{N\in\mathbb{N}}$ be an increasing sequence, and let $A\subseteq \IP((n_j)_{j\in\mathbb{N}})$ be a subset. We define the \emph{lower $\IP$-density} of $A$ with respect to $\Phi$ as the quantity
    $$\underline{d}_{\IP_{\Phi}}(A):=\liminf_{N\rightarrow\infty} \frac{|A\cap \IP_{\Phi_N}\bigl((n_j)_{j\in\mathbb{N}}\bigr)|}{|\IP_{\Phi_N}\bigl((n_j)_{j\in\mathbb{N}}\bigr)|}.$$
\end{definition}
Other notions of largeness for subsets of $\mathbb{N}$ also have counterparts for $\IP$s, and we include a discussion in Appendix~\ref{largenotions}.

\subsection{Lower bounds for three terms}

\begin{theorem}\label{three}
    Let $\XX=(X,\mathcal{B},\mu,T)$ be an ergodic invertible measure preserving system, let $(n_j)_{j\in\mathbb{N}}$ be a sequence with rational spectrum, and let $A\subseteq X$ be a measurable set. Then for all $\varepsilon>0$,  increasing F\o lner sequence $\Phi=(\Phi_N)_{N\in\mathbb{N}}$, and  coprime and nonzero integers $\ell_1,\ell_2\in \mathbb{Z}$, we have
    $$\dip\{n\in \IP\big((n_j)_{j\in\mathbb{N}}\big) : \mu(A\cap T^{-\ell_1n}A\cap T^{-\ell_2n}A)>\mu(A)^3-\varepsilon\}>0.$$
\end{theorem}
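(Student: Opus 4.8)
The plan is to reduce the statement to a limit formula obtained in Section~\ref{limit:sec} and then apply a standard positivity-plus-Fatou argument. First I would reduce to the case where $\XX$ is a nilsystem. By Theorem~\ref{HKcharacteristic}, the factor $Z_2(\XX)$ is characteristic for the average $\EIP T^{\ell_1 n}f\cdot T^{\ell_2 n}f$, so when computing $\liminf$ of the correlation $\mu(A\cap T^{-\ell_1 n}A\cap T^{-\ell_2 n}A)$ along the $\IP$ we may replace $\one_A$ by its conditional expectation onto $Z_2(\XX)$, and since $Z_2(\XX)$ is an inverse limit of $2$-step nilsystems we may approximate and assume $\XX$ is an ergodic $2$-step nilsystem. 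The key point is that $2$-step nilsystems are synchronized by Lemma~\ref{goodlemma}, which makes the machinery of Section~\ref{limit:sec} available.

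Next I would split into two cases according to whether the spectrum of $(n_j)_{j\in\N}$ meets the nontrivial point spectrum of $\XX$. If the spectrum of $(n_j)$ contains no nontrivial eigenvalue of $T$, then Theorem~\ref{formula} gives the exact limit
\[
\lim_{N\rightarrow\infty}\EIP \prod_{i=0}^{2} f_i(T_\tau^{\ell_i n}x) = \int_{\tilde X}\prod_{i=0}^{2} f_i(x y_i)\d\tilde\mu(y\tilde\Gamma),
\]
with $\ell_0=0$. Taking all $f_i = \one_A$ and integrating over $x$, the limit of the average of the correlation equals $\int_{\tilde X \times X}\prod_i \one_A(x y_i)\,\d\tilde\mu\,\d\mu$, and a direct application of the Cauchy--Schwarz (or H\"older) inequality over the ergodic nilmanifold $\tilde X_x$ bounds this below by $\mu(A)^3$; this is exactly the optimal lower bound appearing in the theorem. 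In the general case, where the spectrum of $(n_j)$ does meet the eigenvalues of $T$, I would instead use the weighted formula of Corollary~\ref{twistedformulanilsystem}: one chooses a non-negative continuous weight $\eta$ on the Kronecker factor normalized so that $\EIP \eta(na)\to 1$, together with the arithmetic weight $\one_{r,\Phi}$, so that the relevant average is carried by the rotation orbit where the rational obstruction is neutralized, and again one obtains a limit formula whose value is at least $\mu(A)^3$.

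The final step is the passage from the positivity of the limiting average to the positivity of the $\IP$-density of the set of good $n$. Having shown
\[
\liminf_{N\rightarrow\infty}\EIP \mu(A\cap T^{-\ell_1 n}A\cap T^{-\ell_2 n}A) \geq \mu(A)^3,
\]
one argues by contradiction: if the set $S_\varepsilon=\{n\in\IP : \mu(A\cap T^{-\ell_1 n}A\cap T^{-\ell_2 n}A)>\mu(A)^3-\varepsilon\}$ had zero lower $\IP$-density, then along a subsequence its relative density in $\IP_{\Phi_N}$ tends to $0$, and since each correlation term is at most $\mu(A)$, the contribution of $S_\varepsilon$ is negligible while the complement contributes at most $\mu(A)^3-\varepsilon$, giving $\liminf \EIP \le \mu(A)^3 - \varepsilon$, a contradiction. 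I expect the main obstacle to be the general (rational but non-disjoint spectrum) case: one must verify that after absorbing $\eta$ into the functions and inserting the weight $\one_{r,\Phi}$, the resulting weighted limit formula still dominates $\mu(A)^3$ rather than some smaller quantity, which requires checking that the weight integrates correctly on the connected component $\tilde X^0$ and that synchronization via Lemma~\ref{goodlemma} is genuinely available for $2$-step systems.
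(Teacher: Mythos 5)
Your skeleton (characteristic factors, then a nilsystem limit formula, then a Fatou-type contradiction converting an averaged bound into a density bound) has the right shape, and the final contradiction step is logically sound, but the central claim it rests on is false. You assert
\[
\liminf_{N\rightarrow\infty}\EIP \mu(A\cap T^{-\ell_1 n}A\cap T^{-\ell_2 n}A)\;\geq\;\mu(A)^3,
\]
deducing it in the disjoint-spectrum case from Theorem~\ref{formula} plus ``a direct application of Cauchy--Schwarz (or H\"older)''. No such inequality holds. When the spectrum of $(n_j)_{j\in\N}$ misses the point spectrum of $T$, the limit of the \emph{unweighted} average is the full configuration count on the Kronecker factor,
\[
\int_{Z}\int_{Z} f(x)\,f(x+\ell_1 t)\,f(x+\ell_2 t)\d\mu(t)\d\mu(x),\qquad f=\E(\one_A\mid Z_1(\XX)),
\]
averaged over \emph{all} $t\in Z$ (this is Theorem~\ref{Kroneckerformula}), and Behrend-type constructions make this quantity as small as $\mu(A)^{c\log(1/\mu(A))}\ll\mu(A)^3$. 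Concretely, take $\XX$ an irrational circle rotation and $A$ a Behrend set: this system is totally ergodic, so its nontrivial eigenvalues are disjoint from the spectrum of any rational-spectrum sequence, hence your first case applies to it; yet the resulting count is exactly the quantity shown to be tiny in the paper's Theorem~\ref{false}, via the example in~\cite{BHK}. Three-term configuration counts are simply not bounded below by the cube of the density by any Cauchy--Schwarz or H\"older argument, which is the whole point of Behrend's construction.

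The missing idea is the weight. The bound $\mu(A)^3-\varepsilon$ is only attainable for those $n$ with $na$ close to the identity in the Kronecker factor, so the entire argument must be run, by contradiction, on the weighted average $\EIP \eta(na)\,\mu(A\cap T^{-\ell_1 n}A\cap T^{-\ell_2 n}A)$, where $\eta\geq 0$ is continuous, normalized so that $\lim_{N\rightarrow\infty}\EIP\eta(na)=1$, and ultimately chosen supported in a small neighborhood of the identity of the torus part of $Z_1(\XX)$, with the finite cyclic part handled by the divisibility weight $\one_{r,\Phi}$. This requires the weighted characteristic factor statement (Lemma~\ref{twistedcharacteristic}, which is where coprimality of $\ell_1,\ell_2$ is actually used, to absorb $\eta$ into the functions), the weighted limit formula (Corollary~\ref{twistedformulanilsystem}), and then the concentration step: as the support of $\eta$ shrinks, the limit tends to $\int f^3\d\mu\geq\bigl(\int f\d\mu\bigr)^3=\mu(A)^3$, where positivity is H\"older applied to $\int f^3$, not to a configuration count. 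Your second case alludes to this machinery but explicitly defers exactly this positivity verification, which is the crux; moreover the case split is unnecessary, since the weight is needed in both cases. A smaller point: for two moving terms the characteristic factor is $Z_1(\XX)$, approximated by $1$-step nilsystems, which are trivially synchronized, so neither $Z_2(\XX)$ nor Lemma~\ref{goodlemma} is needed here --- those enter only in the four-term Theorem~\ref{four}.
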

\begin{proof}
The factor $Z_1(\XX)$ is (measurably isomorphic to) a rotation on a compact abelian group and we let $a\in Z_1(\XX)$ denote the rotation. Let $\eta\colon Z_1(\XX)\rightarrow\mathbb{R}$ be a non-negative continuous function (yet to be specified)  such that $\lim_{N\rightarrow\infty} \sEIP \eta(na)=1$. For contradiction, suppose that the statement is false for some $0<\varepsilon<1$.  Then there exists an increasing F\o lner sequence $\Phi=(\Phi_N)_{N\in\mathbb{N}}$ so that 
\begin{equation}\label{few}
    \lim_{N\rightarrow\infty}\frac{|R_{\varepsilon,N}|}{|\IP_{\Phi_N}\bigl((n_j)_{j\in \N}\bigr)|}=1,
\end{equation}
where $$R_{\varepsilon,N}=\{n\in\IP_{\Phi_N}\bigl((n_j)_{j\in \N}\bigr) : \mu(A\cap T^{-\ell_1n}A\cap T^{-\ell_2n}A)\leq \mu(A)^3-\varepsilon\}.$$

Since $\eta$ is continuous and $Z_1(\XX)$ is compact, 
it is bounded by a constant (not depending on $N$), and so we have
\begin{align*}\sEIP \eta(na)\mu(A\cap T^{-\ell_1n}A\cap& T^{-\ell_2n}A) \\&\leq  \frac{1}{|\IP_{\Phi_N}|}\sum_{n\in R_{\varepsilon,N}}\eta(na)(\mu(A)^3-\varepsilon) + \|\eta\|_\infty\cdot o_{N\rightarrow\infty}(1)\\&\leq \sEIP \eta(na) \cdot (\mu(A)^3-\varepsilon) + \|\eta\|_\infty \cdot o_{N\rightarrow\infty}(1).
\end{align*}

Passing to a subsequence of $\Phi_N$ if necessary and taking $N\rightarrow\infty$, we conclude that
\begin{equation}\label{threeupper}
\lim_{N\rightarrow\infty}\sEIP \eta(na)\mu(A\cap T^{-\ell_1n}A\cap T^{-\ell_2n}A) <\mu(A)^3-\varepsilon.
\end{equation}
Choosing $\delta>0$ sufficiently small with respect to $\varepsilon$, it follows from Lemma~\ref{twistedcharacteristic} that there exists a $1$-step nilsystem factor $\YY$ such that if $f$ is the projection of $\one_A$ onto $\YY$, then 
\begin{multline}\label{close}
  \big| \lim_{N\rightarrow\infty} \Big(\sEIP \eta(na)\mu(A\cap T^{-\ell_1n}A\cap T^{-\ell_2n}A) \\ - \sEIP \eta(na)\int_\YY f(x) f(T^{\ell_1 n}x) f(T^{\ell_2 n}x) ~\d\mu_{\YY}(x)\Big)\big| < \varepsilon/2.
\end{multline}
As a $1$-step nilsystem is a rotation on a compact abelian Lie group, we can write $Y=\T^m\times D$ for some finite dimensional torus $\T^m= (\R/\Z)^m$ for some $m\geq 1$ and  finite cyclic group $D$, 
with the rotation by the element $a=(a_1,a_2)$ for some $a_1\in \T^m$ and $a_2\in D$. 
Let $\eta\colon Y\rightarrow \mathbb{R}$ be a function of the form $\eta(t,d) = \eta_1(t)\cdot c^{-1} \one_{\{0\}}(d)$, where $\eta_1\colon\T^m\rightarrow \mathbb{R}$ is a non-negative continuous function still to be chosen, $\one_{\{0\}}(d)$ takes the value $1$ if $d=0$ and the value $0$ otherwise, and $c := \lim_{N\rightarrow\infty} \sEIP \one_{\{0\}}(na_2)$. Then equation~\eqref{close} holds for this $\eta$. 
Note that $\one_{\{0\}}(na_2) = \one_{r}(n)$ where $r=|D|$ and so $c$ is nonzero and $c^{-1}\cdot \one_{\{0\}}(na_2) = \one_{r,\Phi}(n).$ Lifting 
$\eta$ to the Kronecker factor of $\XX$, by equation~\eqref{close} and Corollary~\ref{twistedformulanilsystem}, we have that the left hand side of~\eqref{threeupper} is $\varepsilon/2$-close to the  quantity
$$\int_\YY\int_{\T} \eta(t) f(x)f(x+\ell_1 t) f(x+\ell_2 t) \d\mu_{\T}(t)\d_{\mu_{\YY}}(x).$$

Let $V$ be small neighborhood of identity in $\T^m$ and assume that $\eta_1$ is a continuous function supported on $V$ such that $\int_{\T^m} \eta_1~d\mu=1$. 
Since the action of $a_1$ on $\T^m$ is totally ergodic, we have that 
\[\lim_{N\rightarrow\infty} \sEIP \eta_1(na_1)\cdot c^{-1} \one_{\{0\}}(na_2) =\int_{\T} \eta(t)~dt = 1.\]
It follows that by taking the neighborhood $V$ to be sufficiently small, the left hand side of  Equation~\eqref{threeupper} is $\varepsilon$-close to
$$\int_Z f(x)f(x)f(x) \d\mu_Z(x) \geq \left(\int_Z f(x) \d\mu_Z(x)\right)^3 = \mu(A)^3.$$ In other words,
$$\lim_{N\rightarrow\infty}\sEIP \eta(na)\mu(A\cap T^{-\ell_1n}A\cap T^{-\ell_2n}A) > \mu(A)^3-\varepsilon,$$
 a contradiction of the inequality in~\eqref{threeupper}. 
\end{proof}

\subsection{Four terms}
 Before extending the lower bounds to four terms, we derive a result that follows from Frantzikinakis~\cite[Proof of Theorem C]{Franpoly}.
\begin{lemma}\label{Fran}
    Let $\XX=(G/\Gamma,\mathcal{B},\mu,T)$ be an ergodic $2$-step nilsystem, $f\in L^\infty(\mu)$ be a bounded function,   $\ell_1,\ell_2$ be coprime and nonzero integers, and  set $\ell_0=0$ and  $\ell_3=\ell_1+\ell_2$. Then
    $$\int_{X}\int_{G_2}\int_{G_2} \prod_{i=0}^3 f(xy_1^{\ell_i}y_2^{\binom{\ell_i}{2}})\d\mu_{G_2}(y_2)\d\mu_{G_2}(y_1)\d\mu(x) \geq \left(\int_X f \d\mu\right)^4.$$
\end{lemma}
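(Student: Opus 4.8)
The plan is to expand the integrand into its vertical Fourier series along the central subgroup $G_2$ and to show that, once $y_1,y_2$ and $x$ are integrated out, only manifestly nonnegative ``square'' terms survive. Throughout I may assume that $f$ is real-valued, as in the intended application $f$ is the conditional expectation of an indicator function, and that $G_2$ is connected (a torus), so that its Pontryagin dual $\widehat{G_2}$ is torsion-free. This last reduction is available because the lemma is applied to systems already reduced to their connected components (as in Corollary~\ref{twistedformulanilsystem}), and it is precisely the point at which the geometry of the nilsystem enters.

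First I would use the measure preserving action of the central group $G_2$ on $X=G/\Gamma$ by translations, which decomposes $L^2(\mu)$ into vertical isotypic components $H_\chi$ indexed by $\chi\in\widehat{G_2}$, where $g\in H_\chi$ means $g(xt)=\chi(t)g(x)$ for all $t\in G_2$. Writing $f=\sum_{\chi}f_\chi$ with $f_\chi\in H_\chi$, I record two elementary facts: $\int_X f_\chi\,\d\mu=0$ unless $\chi=0$, so that $\int_X f\,\d\mu=\int_X f_0\,\d\mu$; and $\overline{f_\chi}=f_{-\chi}$ since $f$ is real. Substituting $f(xy_1^{\ell_i}y_2^{\binom{\ell_i}{2}})=\sum_{\chi}\chi(y_1)^{\ell_i}\chi(y_2)^{\binom{\ell_i}{2}}f_\chi(x)$ and expanding the fourfold product, the Haar integrations over $y_1,y_2\in G_2$ annihilate every term except those whose characters $(\chi_0,\chi_1,\chi_2,\chi_3)$ satisfy (using $\ell_0=0$) the relations $\ell_1\chi_1+\ell_2\chi_2+(\ell_1+\ell_2)\chi_3=0$ and $\binom{\ell_1}{2}\chi_1+\binom{\ell_2}{2}\chi_2+\binom{\ell_3}{2}\chi_3=0$ in $\widehat{G_2}$, while the integration over $x$ forces in addition $\chi_0+\chi_1+\chi_2+\chi_3=0$ (because $f_{\chi_0}f_{\chi_1}f_{\chi_2}f_{\chi_3}\in H_{\chi_0+\chi_1+\chi_2+\chi_3}$).

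The core of the argument is then a purely algebraic claim: these three relations, together with $\gcd(\ell_1,\ell_2)=1$, the identity $\binom{\ell_3}{2}=\binom{\ell_1}{2}+\binom{\ell_2}{2}+\ell_1\ell_2$, and torsion-freeness of $\widehat{G_2}$, force $\chi_2=-\chi_1$ and $\chi_3=-\chi_0$. I would establish this by setting $\psi_1=\chi_1+\chi_3$ and $\psi_2=\chi_2+\chi_3$, so that the first relation becomes $\ell_1\psi_1+\ell_2\psi_2=0$ and the second becomes $\binom{\ell_1}{2}\psi_1+\binom{\ell_2}{2}\psi_2+\ell_1\ell_2\chi_3=0$; doubling the latter and using $(\ell_1+\ell_2-1)(\ell_1\psi_1+\ell_2\psi_2)=\ell_1(\ell_1-1)\psi_1+\ell_2(\ell_2-1)\psi_2+\ell_1\ell_2(\psi_1+\psi_2)$ yields $\ell_1\ell_2(\psi_1+\psi_2-2\chi_3)=0$, i.e. $\ell_1\ell_2(\chi_1+\chi_2)=0$. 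Torsion-freeness then gives $\chi_1+\chi_2=0$, and the constraint $\sum_i\chi_i=0$ gives $\chi_0+\chi_3=0$.

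Finally, for each surviving tuple $\chi_2=-\chi_1$ and $\chi_3=-\chi_0$, so using $\overline{f_\chi}=f_{-\chi}$ the corresponding term equals $\int_X |f_{\chi_0}|^2|f_{\chi_1}|^2\,\d\mu\ge 0$. Discarding all but the trivial tuple $\chi_0=\chi_1=\chi_2=\chi_3=0$ leaves $\int_X f_0^4\,\d\mu$, and Jensen's inequality for the convex function $t\mapsto t^4$ on the probability space $(X,\mu)$ gives $\int_X f_0^4\,\d\mu\ge\left(\int_X f_0\,\d\mu\right)^4=\left(\int_X f\,\d\mu\right)^4$; summing the nonnegative contributions proves the lemma. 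The main obstacle is the algebraic step that forces the square structure: without the torsion-free hypothesis the relation $\ell_1\ell_2(\chi_1+\chi_2)=0$ has nontrivial solutions producing terms of indefinite sign, so the reduction to connected $G_2$ is genuinely needed and must be set up with care. By contrast, the interchange of summation and integration is routine, since all functions are bounded and the vertical Fourier series converge in $L^2(\mu)$.
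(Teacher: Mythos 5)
Your argument is correct in the setting you work in, and it takes a genuinely different route from the paper's. The paper stays in physical space: it introduces an extra averaging variable $y\in G_2$, identifies the configuration set $\bigl\{(y,\,yy_1^{\ell_1}y_2^{\binom{\ell_1}{2}},\,yy_1^{\ell_2}y_2^{\binom{\ell_2}{2}},\,yy_1^{\ell_3}y_2^{\binom{\ell_3}{2}})\bigr\}$ with the subgroup of $G_2^4$ cut out by the single equation $h_1^{\ell_2-\ell_1}h_3^{\ell_1+\ell_2}=h_4^{\ell_2-\ell_1}h_2^{\ell_1+\ell_2}$, disintegrates over the common value $h$, and applies Cauchy--Schwarz twice. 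Your proof is precisely the Fourier dual of this: your three relations on $(\chi_0,\chi_1,\chi_2,\chi_3)$ are the annihilator conditions for that configuration subgroup, and your algebraic step (which checks out; note it only uses $\ell_1\ell_2\neq 0$, not coprimality) shows the annihilator forces the pairing $\chi_2=-\chi_1$, $\chi_3=-\chi_0$, so every surviving term is a square and the trivial frequency contributes $\int_X f_0^4\,\mathsf{d}\mu$, after which Jensen finishes. What your approach buys: the surviving frequencies are identified exactly, and the role of connectedness of $G_2$ (torsion-freeness of the relevant dual group) is made explicit. This is a real virtue, because the paper's reparameterization step silently needs the same kind of input: the asserted equality of the two sets amounts to the claim that the vector $(\ell_2-\ell_1,-(\ell_1+\ell_2),\ell_1+\ell_2,-(\ell_2-\ell_1))$ generates the full annihilator lattice, which fails, e.g., for $\ell_1=1$, $\ell_2=3$, $G_2=\mathcal{S}^1$, where the parameterized set has index $2$ in the equation-defined set; your character computation is immune to this issue. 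What the paper's approach buys: it is shorter and avoids all Fourier convergence questions.

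Two caveats on your write-up. First, as you acknowledge, you prove the lemma only when the dual of the compact group $G_2/(G_2\cap\Gamma)$ through which $G_2$ acts is torsion-free; this does cover the paper's application (the lemma is invoked with $N_2=[N,N]$ for connected $N$, whose action factors through a torus), but it is a restriction relative to the lemma as stated, and you should say explicitly that you are proving the special case sufficient for Theorem~\ref{four}. Second, the interchange of summation and integration is not quite as automatic as ``bounded plus $L^2$ convergence'': partial sums of vertical Fourier series of a bounded function need not be uniformly bounded, and a product of four $L^2$-convergent series cannot simply be integrated term by term. The standard repair is to observe that each isotypic component satisfies $\|f_\chi\|_{L^\infty(\mu)}\le\|f\|_{L^\infty(\mu)}$ (it is given by integrating $f$ against a character along the $G_2$-orbit), to approximate $f$ by Fej\'er-type means $f^{(n)}(x)=\int f(xt)K_n(t)\,\mathsf{d}t$, which have finitely supported vertical expansions, satisfy $\|f^{(n)}\|_{L^\infty(\mu)}\le\|f\|_{L^\infty(\mu)}$, and converge to $f$ in $L^2(\mu)$, to run your computation for $f^{(n)}$ (where all sums are finite), and then to pass to the limit on both sides of the inequality using the uniform $L^\infty$ bound together with $L^2$ convergence. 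With these two points spelled out, your argument is complete.
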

\begin{proof}
Since $G_2$ commutes with all elements in $G$, we can rewrite the integral on the left hand side as 
$$\int_X \int_{G_2\times G_2} \prod_{i=0}^3 f(y_1^{\ell_i}y_2^{\binom{\ell_i}{2}}x) \d\mu_{G_2\times G_2}(y_1,y_2)\d\mu_X(x).$$
Taking the integral over $G_2$, this becomes 
\begin{equation}\label{G2}
\int_X \int_{G_2\times G_2\times G_2} \prod_{i=0}^3 f(yy_1^{\ell_i}y_2^{\binom{\ell_i}{2}}x) \d\mu_{G_2\times G_2\times G_2}(y,y_1,y_2)\d\mu_X(x).
\end{equation}
Reparameterizing the set $$\{(y,yy_1^{\ell_1}y_2^{\binom{\ell_1}{2}},yy_1^{\ell_2}y_2^{\binom{\ell_2}{2}},yy_1^{\ell_1+\ell_2}y_2^{\binom{\ell_1+\ell_2}{2}} : y,y_1,y_2\in G_2\}$$ as 
$$\{(h_1,h_2,h_3,h_4)\in G_2^4 : h_1^{\ell_2-\ell_1}h_3^{\ell_1+\ell_2} = h_4^{\ell_2-\ell_1}h_2^{\ell_1+\ell_2}\},$$ we can rewrite~\eqref{G2} as 
$$\int_\XX \int_{G_2} \left(\int_{\{h_1^{\ell_2-\ell_1}h_3^{\ell_1+\ell_2}=h\}} f(h_1x)f(h_3x)\d\mu_{G_2\times G_2}(h_1,h_3)\right)^2\d\mu_{G_2}(h)\d\mu_X(x).$$
By the Cauchy-Schwartz Inequality and a change of variables, this last expression is greater than  or equal to 
$$\int_\XX \left(\int_{G_2} f(hx)\d\mu_{G_2}(h)\right)^4 \d\mu_X(x). $$
This, in turn, is greater or equal than
$$\left(\int_\XX \int_{G_2} f(hx)\d\mu_{G_2}(h)\d\mu_X(x)\right)^4 = \left(\int_\XX f(x)\d\mu_X(x)\right)^4, $$ as required.
\end{proof}

\begin{theorem}\label{four}
Let $\XX=(X,\mathcal{B},\mu,T)$ be an ergodic invertible measure preserving system, let $(n_j)_{j\in\mathbb{N}}$ be a sequence with rational spectrum, and  let $A\subseteq \XX$ be a measurable set.  Then for all $\varepsilon>0$, increasing F\o lner sequence $\Phi=(\Phi_N)_{N\in\mathbb{N}}$, and  coprime and nonzero integers $\ell_1,\ell_2\in\mathbb{Z}$, we have
$$\dip\{n\in \IP\bigl((n_j)_{j\in\mathbb{N}}\bigr) : \mu(A\cap T^{-\ell_1n}A\cap T^{-\ell_2n}A\cap T^{-(\ell_1+\ell_2)n}A)>\mu(A)^4-\varepsilon\}>0.$$
\end{theorem}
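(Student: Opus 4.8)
The plan is to follow the proof of Theorem~\ref{three} almost verbatim, making two substitutions: the characteristic factor is now a $2$-step rather than a $1$-step nilsystem, and the elementary convexity bound $\int f^3\geq(\int f)^3$ is replaced by Lemma~\ref{Fran}. First I would argue by contradiction. Fixing $0<\varepsilon<1$ and supposing the conclusion fails, I pass to an increasing F\o lner sequence $\Phi$ along which the complementary set
\[
R_{\varepsilon,N}=\{n\in\IP_{\Phi_N} : \mu(A\cap T^{-\ell_1 n}A\cap T^{-\ell_2 n}A\cap T^{-(\ell_1+\ell_2)n}A)\leq \mu(A)^4-\varepsilon\}
\]
has full relative density. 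Introducing a non-negative continuous weight $\eta$ on the Kronecker factor $Z_1(\XX)$ normalized so that $\lim_{N\to\infty}\EIP\eta(na)=1$, the same bounded-weight estimate as in Theorem~\ref{three} then yields
\[
\lim_{N\to\infty}\EIP\eta(na)\,\mu(A\cap T^{-\ell_1 n}A\cap T^{-\ell_2 n}A\cap T^{-(\ell_1+\ell_2)n}A)<\mu(A)^4-\varepsilon,
\]
so it suffices to produce some $\eta$ forcing this weighted average to be at least $\mu(A)^4-\varepsilon$.

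Next I would reduce to a nilsystem. Writing the intersection as $\int_X\prod_{i=0}^{3}\one_A(T^{\ell_i n}x)\,\d\mu$ with $\ell_0=0$ and $\ell_3=\ell_1+\ell_2$, and regarding the three moving factors (indexed by the distinct shifts $\ell_1,\ell_2,\ell_1+\ell_2$) as a product of $k=3$ functions, Lemma~\ref{twistedcharacteristic} supplies a $2$-step nilsystem factor $\YY$ for which replacing $\one_A$ by its projection $f=\E(\one_A\mid\YY)$ changes the weighted average by less than $\varepsilon/2$. By Lemma~\ref{goodlemma} every ergodic $2$-step nilsystem is synchronized on $G^0/\Gamma^0$, so Corollary~\ref{twistedformulanilsystem} applies: folding the companion weight $\one_{r,\Phi}(n)$ into $\eta$ through the finite part of the Kronecker factor, exactly as in the three-term proof, the weighted four-term average converges to an integral of the shape
\[
\int_{\tilde X^0}\eta(y_1)\prod_{i=0}^{3} f(xy_i)\,\d\tilde\mu
\]
over the connected component $\tilde X^0$ of the configuration nilmanifold $\tilde X=\tilde G/\tilde\Gamma_{\ell_1,\ell_2,\ell_1+\ell_2}$, where $y_0$ is the identity.

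The decisive step is the localization of $\eta$. Taking $\eta=\eta_1\cdot c^{-1}\one_{\{0\}}$ with $\eta_1$ a normalized bump supported in a shrinking neighborhood of the identity in the torus part $\T^m$ of the Kronecker factor, the weight pins the linear ($G/G_2$) directions of the parameter to the identity while leaving the degree-two directions free. As the neighborhood shrinks I expect the displayed integral to converge to
\[
\int_X\int_{G_2}\int_{G_2}\prod_{i=0}^{3} f\bigl(x\,y_1^{\ell_i}y_2^{\binom{\ell_i}{2}}\bigr)\,\d\mu_{G_2}(y_2)\,\d\mu_{G_2}(y_1)\,\d\mu(x),
\]
which is precisely the quantity bounded below in Lemma~\ref{Fran}. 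That lemma yields $\bigl(\int_X f\,\d\mu\bigr)^4=\mu(A)^4$, so for a sufficiently small neighborhood the weighted average exceeds $\mu(A)^4-\varepsilon$, contradicting the inequality above and completing the proof.

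I expect the main obstacle to be this final localization: one must verify that concentrating $\eta$ on the abelian directions collapses the formula of Corollary~\ref{twistedformulanilsystem} exactly onto the residual $G_2$-integral of Lemma~\ref{Fran}, with the quadratic directions surviving unconstrained. This is where the $2$-step case genuinely departs from the $1$-step argument, in which localization collapses the integral all the way down to $\int_Z f^3$; here one needs a careful decomposition of the coordinates of $\tilde G^0$ into linear and $G_2$ parts, together with a dominated-convergence argument as the support of $\eta_1$ shrinks, in order to identify the surviving integral with the left-hand side of Lemma~\ref{Fran}.
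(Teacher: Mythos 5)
Your proposal is correct and follows essentially the same route as the paper's proof: the contradiction setup with the weighted average, Lemma~\ref{twistedcharacteristic} (with $k=3$) to pass to a $2$-step nilsystem factor, Lemma~\ref{goodlemma} together with Corollary~\ref{twistedformulanilsystem} for the limit formula, localization of the bump $\eta_1$ on the torus part of the Kronecker factor, and Lemma~\ref{Fran} for the final lower bound. The localization step you flag as the main obstacle is handled in the paper exactly as you anticipate: choosing $\eta_1$ supported on the projection of a small ball around $G_2$ pins the $y_1$ coordinate to the $G_2$-fiber, collapsing the configuration integral onto the double $G_2$-integral appearing in Lemma~\ref{Fran}.
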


\begin{proof}
We start as in the proof of Theorem~\ref{three}, identifying $Z_1(\XX)$ with a compact abelian group and letting $a\in Z_1(\XX)$ denote the rotation. 
Again let $\eta\colon Z_1(\XX)\rightarrow\mathbb{R}$ be a continuous non-negative function such that $$\lim_{N\rightarrow\infty} \sEIP \eta(na)=1$$ and suppose by contradiction that the statement does not hold. Then, as in the proof of Theorem \ref{three}, for can find some $0<\varepsilon<1$ such that 
\begin{equation}\label{upperfour}
    \lim_{N\rightarrow\infty} \sEIP \eta(na)\mu(A\cap T^{-\ell_1n}A\cap T^{-\ell_2n}A\cap T^{-(\ell_1+\ell_2)n}) \leq \mu(A)^4 - \varepsilon.
\end{equation}
Taking $\delta>0$ sufficiently small with respect to $\varepsilon>0$, it follows from  Lemma~\ref{twistedcharacteristic} that there exists a factor isomorphic to an ergodic $2$-step nilsystem $\YY$ such that letting $f$ denote the projection of $\one_A$ onto this factor, we have that the left hand side of~\eqref{upperfour} is $\varepsilon/2$-close to
\begin{equation}\label{close4}
      \lim_{N\rightarrow\infty} \sEIP \eta(na)\int_{Y} f(x)f(T^{\ell_1n}x) f(T^{\ell_2n}x)f(T^{(\ell_1+\ell_2)n}x)~\d\mu(x).
\end{equation}

By Lemma~\ref{goodlemma}, the system $\YY$ is synchronized. Let $r$ be as in Lemma~\ref{goodlemma} and let $Z(\YY)$ denote the Kronecker factor of $\YY$. Again write $Z=\T^m\times D$ for some finite dimensional torus $\T^m$ and a finite cyclic group $D$, and we write the translation by $T$ as $a=(a_1,a_2)\in Z(\YY)$ with $a_1\in \T^m$ and $a_2\in D$. For convenience, we assume that $r$ divides $|D|$, and otherwise we can pass to an extension such that this holds.  
Take $\eta\colon Z(\YY)\rightarrow \mathbb{R}$ to be in the proof of Theorem~\ref{three}, meaning that it is a function the form $\eta(t,d) = \eta_1(t)\cdot c^{-1}\one_{\{0\}}(d) $ where $\eta_1\colon  \T^m\rightarrow \mathbb{R}$ is a non-negative continuous function yet to be specified and $c:=\lim_{N\rightarrow\infty}\sEIP \one_{\{0\}}(na_2)$, and lift this function to the Kronecker factor of $\XX$. 
Let $Y^0=N/\Lambda$ denote the connected component of $Y$. 
For notational simplicity, set  $\ell_0=0$ and $\ell_3=\ell_1+\ell_2$. Then, by~\eqref{close4} and Proposition~\ref{twistedformulanilsystem}, the left hand side of~\eqref{upperfour} is $\varepsilon/2$ close to
\begin{equation}\label{firstformula}
    \int_{\YY} \int_{N/\Lambda}\int_{N_2/\Lambda_2} \eta_1(y_1\Lambda) \prod_{i=0}^3 f(xy_1^{\ell_i}y_2^{\binom{\ell_i}{2}})~ d\mu_{N_2/\Gamma_2}(y_2) \d\mu_{N/\Gamma}(y_1)\d\mu_{\YY}(x).
\end{equation}
Let $V=B(G_2,r)$ be an open ball with radius $r>0$ and choose $r$  to be sufficiently small with respect to $\varepsilon>0$ and such that $V$ has trivial intersection with the group $D$.  Let  $\eta_1\colon\T^m\rightarrow \mathbb{R}$ be a function supported on the projection of $V$ onto $\T^m$  and such that the lift to $Z$ satisfies $\int_{\T}\eta_1(t)~d\mu(t)=1$. Since the action of $a_1$ on $\T^m$ is totally ergodic, 
it follows that 
\[\lim_{N\rightarrow\infty}\sEIP \eta_1(na_1)\cdot c^{-1}\one_{\{0\}}(d) = \int_{\T} \eta_1(t) ~\d\mu_{\T}(t) = 1.
\] 
Using~\eqref{firstformula}, it follows that the left hand side of equation~\eqref{upperfour} is $\varepsilon$-close to
$$\int_{Y} \int_{N_2/\Lambda_2}\int_{N_2/\Lambda_2} \prod_{i=0}^3 f(xy_1^{\ell_i}y_2^{\binom{\ell_i}{2}}) \d\mu_{N_2/\Lambda_2}(y_1,y_2) \d\mu_{Y}(x).$$
By Lemma~\ref{Fran}, this is bounded below by $\mu(A)^4$, a contradiction of~\eqref{upperfour}.
\end{proof}

\appendix
\section{On different notions of largeness}\label{largenotions} 

There are several other notions of largeness that are appropriate for $\IP$-sequences, and we discuss relations among some of them.  Let $(n_j)_{j\in\mathbb{N}}$ be a sequence of natural numbers and let $A\subseteq \IP\bigl((n_j)_{j\in\mathbb{N}}\bigr)$.
    \begin{enumerate}
        \item \label{item:densityone}
        The set $A$ has \emph{positive lower $\IP$-density} if for any increasing F\o lner sequence $(\Phi_N)_{N\in\mathbb{N}}$,  we have $\dip(A)>0.$ Replacing the $\liminf$ in the definition of $\dip$ with the limit or with $\limsup$, we define \emph{positive $\IP$-density} and \emph{positive upper $\IP$-density} of the set $A$. 
        \item \label{item:densitytwo}
        The set $A$ is \emph{syndetic in $\IP\bigl((n_j)_{j\in\mathbb{N}}\bigr)$} if finitely many translations of $A$ cover $\IP\bigl((n_j)_{j\in\mathbb{N}}\bigr)$. 
        \item 
        \label{item:densitythree}
        The set $A$ is \emph{$\IP$-syndetic} if it has nontrivial intersection with every $\IP$-F\o lner sequence.
    \end{enumerate}

We check that none of these density notions are equivalent.
\begin{lemma}
    For each of the density conditions~\eqref{item:densityone}, \eqref{item:densitytwo}, and~\eqref{item:densitythree}, there exists a set satisfying that condition but not the other two.
\end{lemma}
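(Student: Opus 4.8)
The plan is to establish the non-equivalences by producing, for each of \eqref{item:densityone}, \eqref{item:densitytwo}, \eqref{item:densitythree}, an explicit subset of a single conveniently chosen $\IP$ that satisfies that condition and fails the other two; since the assertion is an existence statement, it suffices to work with one sequence of generators. I would take $(n_j)_{j\in\N}$ super-increasing and growing fast enough that $n_{j+1}-\sum_{i\le j}n_i\to\infty$. Then $\IP\big((n_j)_{j\in\N}\big)$ is a genuine set, each element $m$ has a unique support $F=\mathrm{supp}(m)$ with $m=s(F):=\sum_{j\in F}n_j$, and two elements lie within bounded $\Z$-distance only if their supports agree on all sufficiently large indices.

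For condition \eqref{item:densitytwo} I would take $A_2=\{m:1\in\mathrm{supp}(m)\}$. One checks $\IP=A_2\cup(A_2-n_1)$, so two translates cover $\IP$ and $A_2$ is syndetic; but along the increasing F\o lner sequence $\Phi_N=[2,a_N]$ every element of $\IP_{\Phi_N}$ omits the generator $n_1$, so $A_2\cap\IP_{\Phi_N}=\emptyset$. This simultaneously forces $\dip(A_2)=0$ (so \eqref{item:densityone} fails) and exhibits an $\IP$-F\o lner sequence disjoint from $A_2$ (so \eqref{item:densitythree} fails). For condition \eqref{item:densitythree} I would take $A_3=\{n_j:j\in\N\}$. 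Any F\o lner block $\IP_{\Phi_N}$ contains the singletons $n_j$ with $j\in\Phi_N$, so $A_3$ meets every $\IP$-F\o lner sequence and is $\IP$-syndetic; however $|A_3\cap\IP_{[M,a]}|\le a$ against $|\IP_{[M,a]}|=2^{a-M+1}$ gives density $0$ (so \eqref{item:densityone} fails), and since any $\bigcup_{t\in T}(A_3+t)$ meets the interval $[0,\sum_{j\le a}n_j]$ in only $O(|T|\,a)$ points while $\IP_{[1,a]}$ has $2^a$ elements there, $A_3$ is not syndetic (so \eqref{item:densitytwo} fails).

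The main obstacle is condition \eqref{item:densityone}, which must be secured for all increasing F\o lner sequences while both \eqref{item:densitytwo} and \eqref{item:densitythree} fail. My plan is to set $A_1=\IP\setminus(D'\cup D)$, where $D'=\{m:\max\mathrm{supp}(m)\in\{g_k:k\in\N\}\}$ for a sparse sequence $g_k=2^k$, and $D=\{m:\mathrm{supp}(m)\subseteq W_N\text{ for some }N\}$ for the disjoint windows $W_N=[b_N,b_N+c_N]$ with $b_N=2^{2^N}$ and $c_N=N$. Deleting $D$ destroys \eqref{item:densitythree}: the sets $W_N$ form a F\o lner sequence with $\IP_{W_N}\subseteq D$, so $A_1\cap\IP_{W_N}=\emptyset$ (note $0\in D$, so the trivial intersection at $0$ is removed). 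Deleting $D'$ destroys \eqref{item:densitytwo}: for $m=n_{g_k}$ the large gap at index $g_k$ forces every $\IP$-element within $\Z$-distance comparable to $n_{g_k}$ of $m$ to have maximal support index exactly $g_k$, hence to lie in $D'$ and outside $A_1$; as $k\to\infty$ this produces $\IP$-elements arbitrarily far from $A_1$, so no finite family of translates covers $\IP$.

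The quantitative heart, which I expect to be the hard part, is verifying that neither deletion destroys positive lower density uniformly. Inside a block $\IP_{[M,a]}$ I would bound the proportion coming from $D$ by $\big(1+\sum_{N:\,b_N\le a}2^{c_N+1}\big)2^{-(a-M+1)}$, which tends to $0$ because $b_N=2^{2^N}$ forces the number of relevant windows to grow like $\log\log a$; and the proportion coming from $D'$ by $\sum_{g_k\le a}2^{g_k-M}\,2^{-(a-M+1)}$, whose $\limsup$ over $a$ is $\tfrac12$, attained as $a\to g_k$. By subadditivity $\dip(A_1)\ge 1-\tfrac12-0=\tfrac12>0$ for every increasing F\o lner sequence, giving \eqref{item:densityone}. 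The delicate points are that these bounds are uniform in $M$ and along every $a_N\to\infty$, and that the $\Z$-gap estimate underlying non-syndeticity genuinely uses the super-increasing growth; arranging one sequence $(n_j)_{j\in\N}$ for which $\{g_k\}$, the windows $W_N$, and the gap condition are simultaneously compatible is the crux.
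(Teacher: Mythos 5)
Your proposal is correct, and for two of the three constructions it coincides with the paper's: for condition~\eqref{item:densitytwo} the paper also takes the elements whose support contains the index $1$ (covering $\IP$ by $A\cup(A-n_1)$ and avoiding $\IP_{[2,a_N]}$), and for condition~\eqref{item:densitythree} it also takes $\{n_j : j\in\N\}$. Where you genuinely diverge is condition~\eqref{item:densityone}: the paper simply sets $A=\IP\setminus\IP_{\Phi}$ with $\Phi=\bigcup_N[N^2,N^2+N]$ and asserts non-syndeticity ``as infinitely many translates are needed,'' whereas you perform a two-stage deletion, removing both the window sets $D$ (to kill $\IP$-syndeticity) and the sparse-maximal-support set $D'$ (to kill syndeticity). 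This extra step is not mere caution: it repairs a real defect. The paper's set is in fact syndetic in $\IP$ by the paper's own criterion, since $3\notin\Phi$ and hence every $y\in\IP_\Phi$ satisfies $y+n_3\in A$, giving $\IP=A\cup(A-n_3)$ --- exactly the two-translate argument the paper uses to prove its \emph{third} example syndetic. Non-syndeticity genuinely requires producing $\IP$-points whose entire $\Z$-neighborhood inside $\IP$ has been deleted, and your $D'$ does precisely that: near $n_{g_k}$, the super-increasing growth $n_{j+1}-\sum_{i\le j}n_i\to\infty$ forces every $\IP$-element within bounded distance to have maximal support index exactly $g_k$, hence to lie in $D'$; your density bookkeeping (proportion of $D'$ at most $\tfrac12$, proportion of $D$ tending to $0$, uniformly over increasing F\o lner sequences) is also correct. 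One small remark: the ``crux'' you flag at the end is a non-issue --- the index sets $\{g_k\}$, the windows $W_N$, and the growth condition on the generators are independent choices, so any fixed super-increasing sequence such as $n_j=10^{j-1}$ satisfies all three requirements simultaneously, with no compatibility to arrange.
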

\begin{proof}
Let $(n_j)_{j\in\mathbb{N}}$ be a sequence such that  all of its finite sums are distinct (for example, we can take  $n_j=10^{j-1}$).

To construct a set that has 
positive $\IP$-density but is neither syndetic in  
$\IP\bigl((n_j)_{j\in\mathbb{N}}\bigr)$ nor is $\IP$-syndetic, set $\Phi_N = [N^2,N^2+N]$, let $\Phi = \bigcup_{N\in\mathbb{N}}\Phi_N$, and take $A = \IP\bigl((n_j)_{j\in\mathbb{N}}\bigr)\backslash \IP_{\Phi}((n_j)_{j\in\mathbb{N}})$. The set $A$ is not $\IP$-syndetic because it has trivial intersection with  $\IP_{\Phi_N}\bigl((n_j)_{j\in\mathbb{N}}\bigr)$ and is not syndetic as infinitely many translates are needed to cover $\IP\bigl((n_j)_{j\in\mathbb{N}}\bigr)$.  By construction of the sequence $\Phi_N$, the set $A$ has positive $\IP$-density. 

The set $A = \{n_j : j\in\mathbb{N}\}$ is easily checked to be $\IP$-syndetic, but does not have positive lower $\IP$-density and is not syndetic in $\IP\bigl((n_j)_{j\in\mathbb{N}}\bigr)$. 

Taking the set $A\subseteq \IP\bigl((n_j)_{j\in\mathbb{N}}\bigr)$ to be the sequence which contains all summands which include $n_1$, then we have that this set avoids the $\IP$-F\o lner sequence $\Phi_N = [2,N]$ and is therefore not $\IP$-large nor $\IP$-syndetic.  However, since  $A\cup A-n_1 = \IP\bigl((n_j)_{j\in\mathbb{N}}\bigr)$, this set is syndetic. 
\end{proof}
We note that it follows immediately from the definitions that for any sequence  $(n_j)_{j\in\mathbb{N}}$ of natural numbers, if a set $A$ intersects every $\IP$ set, then $A\cap \IP\bigl((n_j)_{j\in\mathbb{N}}\bigr)$ is $\IP$-syndetic in $\IP\bigl((n_j)_{j\in\mathbb{N}}\bigr)$.

\section{Failure of the large intersection property for non-rational sequences}\label{failure:sec}
We show that Theorem~\ref{three} does not hold for arbitrary sequences, even when the notion of $\IP$-largeness is replaced by some other notions of largeness, including stronger ones. 
A set of integers is \emph{central} if it is an element in a minimal idempotent in the Stone-\v{C}ech compactification  $\beta\N$ of the integers (see~\cite{furstenberg2014recurrence} for more on central sets and~\cite{Bergelsonidempotents} for this equivalent definition).

It was established in~\cite{Bergelsonidempotents} that a central set $I$ contains all finite sums of a sequences $(n_j)_{j\in\mathbb{N}}$ however, without multiplicities. Throughout we  assume that all central sets contains $0$ by default. 
Though our notion of $\IP$ differs from that used in~\cite{furstenberg2014recurrence}, given $\IP$ generated by a sequence of distinct elements $(n_j)_{j\in\mathbb{N}}$, we  can always find a sub-$\IP$ by passing to a subsequence $(n_{j_i})_{i\in\mathbb{N}}$ satisfying that $n_{j_i}>\sum_{t=1}^{i-1}n_{j_t}$ for all $i>1$. 
Thus the $\IP$ generated by this subsequence is a set, rather than a multiset, and   a central set contains an $\IP$.

\begin{theorem}\label{false}
There exists an ergodic invertible measure preserving system $\XX=(X,\mathcal{B},\mu,T)$, a constant $c>0$, and a set $A\subseteq X$ of positive measure such that  any central set $I$ contains a central subset $I'\subseteq I$ with the property that 
$$\{0\not = n\in I' : \mu(A\cap T^{-n} A\cap T^{-2n}A)\geq \mu(A)^{-c\log \mu(A)}\} = \emptyset.$$
\end{theorem}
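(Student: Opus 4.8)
The exponent in the bound, $\mu(A)^{-c\log\mu(A)}=e^{-c(\log\mu(A))^2}$, is the signature of Behrend's construction of sets with few three-term progressions, so the plan is to realize a dynamical version of it. First I would record the combinatorial input: for small $\delta$ there is a subset of $\mathbb{Z}/N\mathbb{Z}$ with $N\approx e^{c(\log\delta)^2}$, of density $\delta$, whose only three-term progressions are trivial, obtained by expanding in a base and intersecting with a thin sphere $\{\sum_i x_i^2\approx R\}$. The governing fact is that along a progression $x,x+n,x+2n$ the quadratic $\sum_i x_i^2$ has constant second difference, so ``closing up'' the progression forces a quadratic (rather than linear) constraint, and dimension is balanced against density exactly as in Behrend to produce the stated exponent.

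Next I would carry this structure into an ergodic system with irrational frequencies, so that the system is \emph{not} rational and Theorem~\ref{three} does not apply. The natural candidate is a skew product of Anzai/Heisenberg type, $T(x,y)=(x+\alpha,\,y+\phi(x))$ over a high-dimensional base torus, with $A$ the preimage of a Behrend sphere shell; the purpose of the two-step (quadratic) structure is that the obstruction to $x,T^nx,T^{2n}x\in A$ is controlled by quantities of the form $\|n^2\beta\|$ rather than $\|n\alpha\|$. This is the crucial point, because the quadratic phase is non-rigid: I expect to show that $\mu(A\cap T^{-n}A\cap T^{-2n}A)$ never exceeds $\mu(A)^{-c\log\mu(A)}$ for $n\ne 0$, except possibly for $n$ lying in a set that is not piecewise syndetic.

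The passage from this to central sets is the clean part. Recall that every central set is a member of a minimal idempotent of $\beta\mathbb{N}$ and is therefore piecewise syndetic. Writing $D=\{0\ne n:\mu(A\cap T^{-n}A\cap T^{-2n}A)\ge \mu(A)^{-c\log\mu(A)}\}$ for the bad set, once I know that $D$ is not piecewise syndetic it follows that $D$ lies in no minimal idempotent. Hence, given any central $I$ and a minimal idempotent $p$ with $I\in p$, we have $D^c\in p$, so $I':=I\cap D^c\in p$ is a central subset of $I$ with $\{0\ne n\in I':\mu(A\cap T^{-n}A\cap T^{-2n}A)\ge\mu(A)^{-c\log\mu(A)}\}=I'\cap D=\emptyset$, which is exactly the assertion.

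The main obstacle is the dynamical Behrend estimate itself: showing the triple correlation obeys the Behrend ceiling for all, or all but a non-piecewise-syndetic set of, nonzero $n$. This has to thread between two failure modes. Equicontinuous (rotation) systems fail because $\IP$-recurrence forces $T^n$ near the identity along sub-$\IP$ sets, and a Lebesgue-density argument then pushes the correlation up to $\mu(A)$; mixing systems fail because they force the correlation up to $\mu(A)^3$, which is still far above $\mu(A)^{-c\log\mu(A)}$. The quadratic, non-rigid phase is what evades both, and the heart of the proof is a uniform-in-$n$ estimate showing that meeting all three sphere constraints requires a quadratic Diophantine coincidence which, after optimizing the Behrend parameters, either keeps the correlation below the ceiling or confines the offending $n$ to a set that is not piecewise syndetic. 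Verifying that this exceptional set is genuinely not piecewise syndetic is the delicate step; it is consistent with, and indeed saturates, the Furstenberg--Katznelson $\IP$-Szemer\'edi lower bound, which only guarantees a piecewise-syndetic set of $n$ with correlation at least some $C(\mu(A))\le \mu(A)^{-c\log\mu(A)}$.
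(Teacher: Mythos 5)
Your ultrafilter reduction (third paragraph) is correct as stated, but the dynamical claim it rests on --- that the bad set $D=\{0\neq n:\mu(A\cap T^{-n}A\cap T^{-2n}A)\geq\mu(A)^{-c\log\mu(A)}\}$ is not piecewise syndetic --- is not only left unproven in your sketch; it is provably false for \emph{every} admissible example. The theorem requires the witness system to be ergodic, and by the Bergelson--Host--Kra large intersections theorem~\cite{BHK}, for every ergodic system and every $\varepsilon>0$ the set $\{n:\mu(A\cap T^{-n}A\cap T^{-2n}A)>\mu(A)^3-\varepsilon\}$ is syndetic. In any Behrend-type construction (yours included) $\mu(A)$ is small, so the ceiling $\mu(A)^{-c\log\mu(A)}=e^{-c(\log\mu(A))^2}$ lies far below $\mu(A)^3/2$; hence $D$ contains a syndetic set and is in particular piecewise syndetic. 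One can also see this concretely in the paper's own witness $T(x,y)=(x+\alpha,y+x)$ on $\T^2$ with $A=\T\times B$: there the correlation equals $F(n^2\alpha)$ with $F(\beta)=\int\int\one_B(y)\one_B(y+v)\one_B(y+2v+\beta)\,dv\,dy$ continuous and of mean $\mu(B)^3$, and $\{n:n^2\alpha\in U\}$ is syndetic for every nonempty open $U$ (return times of a minimal skew product), so $D$ is outright syndetic. For the same reason your closing reconciliation with Furstenberg--Katznelson fails: for ergodic systems the guaranteed level on a syndetic set is $\mu(A)^3-\varepsilon$, which is far \emph{above} the Behrend ceiling, not below it.

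The asymmetry the theorem actually exploits is not ``piecewise syndetic versus not'' but ``piecewise syndetic versus central'': a syndetic set need not be central (the odd numbers are syndetic yet contain no IP set, hence are not central), and the theorem lives precisely in that gap. So your final paragraph should be run with the weaker input ``$D$ belongs to no minimal idempotent,'' and that is what the paper proves directly. It takes the Anzai skew product above with $B$ the Behrend-type set from~\cite[Proof of Theorem 2.1]{BHK}, expands $\mu(A\cap T^{-n}A\cap T^{-2n}A)$ as a Fourier series in which the $n$-dependence enters only through powers of the eigenvalue $\alpha$ (the quadratic phase you anticipated), and then invokes Bergelson's theorem~\cite{bergelson2003minimal} that along any minimal idempotent $p$ the weak limit $p\text{-}\lim_n T^n$ is the projection onto the Kronecker factor. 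Consequently all these phases rigidify to $1$, and $p\text{-}\lim_n\mu(A\cap T^{-n}A\cap T^{-2n}A)$ collapses to the continuous three-term-progression density of $B$, which Behrend's bound places strictly below the ceiling; therefore $D\notin p$ for every minimal idempotent $p$, which is exactly what your last step needs. No uniform-in-$n$ Diophantine estimate can substitute for this $p$-limit statement, because uniformity in $n$ (outside a non-piecewise-syndetic set) is exactly what BHK forbids.
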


\begin{proof}
Consider the space $X = \T^2$, where $\T = \R/\Z$,  equipped with the action $T(x,y) = (\alpha+x, y+x)$ for some irrational $\alpha\in \T$. Since $\alpha$ is irrational, this system is ergodic and it is easy to check that the iterates of a point $(x,y)$ are given by
$$T^n(x,y) = (n\alpha+x, y+nx+\binom{n}{2}\alpha).$$

Set $A=\T\times B$ for a subset $B$ yet to be chosen.
Writing $\one_A(x,y) = \sum_{n\in\mathbb{Z}} a_n y^n$ in terms of the Fourier series for $\one_A$ (note that $\one_A$ does not depend on the first coordinate), we have that 
\begin{align*}
    \mu(A\cap & T^{-n}A\cap T^{-2n}A) = \int_{\XX} \one_A\cdot T^n \one_A\cdot T^{2n}\one_A \d\mu\\&=\sum_{m_0,m_1,m_2\in\mathbb{Z}} a_{m_0}a_{m_1}a_{m_2} \alpha^{m_1\binom{n}{2}+m_2\binom{2n}{2}} \int_{X} y^{m_0+m_1+m_2} x^{n(m_1+2m_2)}\d\mu(x,y).
\end{align*}
The last integral is zero unless $m_0+m_1+m_2=0$ and $m_1+2m_2=0$ (note that $n\not =0$). In this case, the integral is $1$. It follows that $m_0=m_1=-2m_2$, and so 
\begin{equation}\label{intersections}
\mu(A\cap T^{-n}A\cap T^{-2n}A) = \sum_{m\in\mathbb{Z}} a_m^2 a_{2m} \alpha^{mn}.
\end{equation}

Let $I$ be a central set and let $p$ be a minimal idempotent containing $I$. For an ultrafilter  $p\in\beta\N$, we write   $p\text{-}\lim_{n\in\mathbb{N}} x_n = x$ to mean that for every neighborhood $U$ of $x$ we have $\{n: x_n\in U\}\in p$
.  Bergelson~\cite{bergelson2003minimal}  showed that $p\text{-}\lim_{n\in\mathbb{N}} T^n = P_K$ converges weakly to the projection onto the Kronecker factor.  In particular, we have that $p\text{-}\lim_{n\in\mathbb{N}} (\alpha^{mn}) = 1$ for all $m\in\Z$. Therefore, applying $p\text{-}\lim$ to~\eqref{intersections},  we deduce that
\begin{equation}
\begin{split}
\label{eq:plim}
p\text{-}\lim_{n\in\mathbb{N}} \mu(A\cap & T^{-n}A\cap T^{-2n}A) = \sum_{m\in\mathbb{Z}} a_m^2 a_{2m} \\&=\int_{\mathbb{R}/\mathbb{Z}\times \mathbb{R}/\mathbb{Z}} \one_{B}(t) \one_{B}(t+s)\one_{B}(t+2s)\d s\d t.
\end{split}
\end{equation}
In~\cite[Proof of Theorem 2.1]{BHK}, it is shown that for every integer $L\geq 1$ there is a set $B_L$ of measure $\frac{\exp(-c\sqrt{\log L})}{4}$, where $c$ is the constant from Behrend's construction of a set of of integers of size $L$ containing no three term arithmetic progressions, for which the quantity in~\eqref{eq:plim} is bounded by $\frac{\exp(-c\sqrt{\log L})}{16L}.$ Taking $L$ sufficiently large, this implies that 
\[
p\text{-}\lim_{n\in\mathbb{N}} \mu(A\cap T^{-n}A\cap T^{-2n}A)  < \mu(A)^{-c\log \mu(A)}. 
\]
It follows that there exists a central set $I'\subseteq I$ such that $\mu(A\cap T^{-n}A\cap T^{-2n}A)  \leq \mu(A)^{-c\log \mu(A)}$ for all $n\in I'$, completing the proof.
\end{proof}

\section{Proof of Lemma~\ref{goodlemma}}\label{goodproof}
To prove Lemma~\ref{goodlemma}, we freely make use of notation and results from~\cite{host2005nonconventional}, giving precise references  but omitting the details.

We start with the following structure theorem for cocycles of type $2$ (see~\cite[Section 7]{host2005nonconventional} for definitions).
\begin{lemma}\label{type2classification}
Assume that $U_1$ is a finite dimensional torus, $K$ is a finite cyclic group, and let $Z=U_1\times K$ be a $1$-step nilsystem with the rotation given by some $a=(a_1,b)\in Z$ for some generator $b\in K$ and irrational $a_1\in U_1$. 
For a finite dimensional torus $U$, any cocycle $\rho\colon Z\rightarrow U$ of type $2$ is cohomologous to a sum of a $K$-invariant cocycle $\rho^0$ and a homomorphism $\phi\colon K\rightarrow U$.
\end{lemma}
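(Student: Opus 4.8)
The plan is to first put $\rho$ into a quadratic normal form using the structure theory for type $2$ cocycles from~\cite[Section 7]{host2005nonconventional}, and then exploit the product decomposition $Z=U_1\times K$ together with the finiteness of $K$ to isolate the homomorphism on $K$. Since $U$ is a finite dimensional torus, by composing with characters of $U$ and treating the coordinates separately it suffices to handle $U=\T$; throughout I fix $r=|K|$, the generator $b$, and use that the base rotation by $(a_1,b)$ is ergodic. The entry point is that, because $Z$ is its own Kronecker factor, the condition that $\rho$ is of type $2$ amounts to saying that the second difference
$$\mathbf{d}_2\rho(z,s,t)=\rho(z)-\rho(z+s)-\rho(z+t)+\rho(z+s+t)$$
is a coboundary over the induced transformation on the square system $Z^{[2]}$. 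The classical analysis of such quadratic coboundaries over an abelian group rotation shows that $\rho$ is cohomologous to a cocycle $\sigma$ whose second difference is a $z$-independent symmetric biadditive form $\beta\colon Z\times Z\to U$; establishing this normal form is the main substantive input and the step I expect to be hardest.

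Granting the normal form, I would next show that $\beta$ is supported on $K\times K$. On $U_1\times U_1$, continuity forces any biadditive map $\T^m\times\T^m\to\T^d$ to vanish: fixing one argument gives a continuous endomorphism of $\T^m$, hence an integer matrix, and biadditivity in the remaining variable forces that matrix to be $0$. On the mixed piece $U_1\times K$, for each fixed $\kappa\in K$ the map $u\mapsto\beta(u,\kappa)$ is again an integer matrix $M_\kappa$, and since $r\kappa=0$ we get $rM_\kappa=M_{r\kappa}=0$, so $M_\kappa=0$; thus $\beta$ vanishes on $U_1\times K$ and, by symmetry, on $K\times U_1$. Consequently the only genuine step-$2$ behaviour of $\sigma$ is concentrated on the finite cyclic direction.

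Finally I would read off the splitting. Because fixing $k$ makes $u\mapsto\sigma(u,k)$ affine (its second difference is $\beta|_{U_1\times U_1}=0$) and the linear part is independent of $k$ (its variation is governed by $\beta|_{U_1\times K}=0$), the cocycle $\sigma$ splits as $\sigma(u,k)=L(u)+g(k)$, where $L\colon U_1\to U$ is a continuous homomorphism and $g\colon K\to U$. The term $L$ is manifestly $K$-invariant and serves as $\rho^0$, so it remains only to show that $g$ is cohomologous to a homomorphism $\phi\colon K\to U$. Here the rotation acts on $K$ as a single $r$-cycle (as $b$ generates $K$), so the cohomology class of $g$ over this rotation is determined by the single sum $\sum_{k\in K}g(k)\in U$; using divisibility of the torus $U$ to extract an appropriate $r$-th root, one checks that $g$ is cohomologous to a genuine homomorphism $\phi$, with the leftover constant absorbed into $\rho^0$. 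Besides the normal-form step, the main obstacle will be this bookkeeping over the finite cycle: the torsion in $U$ makes the relevant roots non-unique, so care is needed to make consistent choices and to verify that the symmetric biadditive form surviving on $K\times K$ contributes nothing beyond a homomorphism modulo coboundaries.
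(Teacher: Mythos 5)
Your proof has a fatal gap at precisely the step you flag as the ``main substantive input'': the claimed normal form is false. The type $2$ condition does \emph{not} allow you to replace $\rho$, up to cohomology, by a cocycle $\sigma$ whose second difference $\mathbf{d}_2\sigma(z;s,t)$ is independent of $z$. What type $2$ actually yields is the Conze--Lesigne-type equation (cf.\ Lemma 8.1 of~\cite{host2005nonconventional}): for each $t$, the difference $\partial_t\rho(z)=\rho(z+t)-\rho(z)$ is a constant plus a coboundary $\partial_a F_t$ (possibly plus a character of $Z$), and the transfer functions $F_t$ depend on $t$ and cannot in general be removed simultaneously by a single cohomological modification of $\rho$. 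If they could, then combining with your own (correct) observation that any $z$-independent biadditive form vanishes on $U_1\times U_1$ and on $U_1\times K$, you would conclude that every type $2$ cocycle over a totally ergodic torus rotation is cohomologous to an affine cocycle (constant plus homomorphism). That statement is false: the vertical cocycle $\rho_H\colon\T^2\to\T$ of the Heisenberg nilsystem over its Kronecker factor is of type $2$ (it presents an ergodic system of order $2$ as an abelian extension of its Kronecker factor, see Section 6 and Section 8 of~\cite{host2005nonconventional}), yet it is not cohomologous to an affine cocycle --- otherwise the Heisenberg nilsystem would be measurably isomorphic to a unipotent affine transformation of $\T^3$ and hence have quasi-discrete spectrum, which it classically does not. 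This is exactly why the Conze--Lesigne/Host--Kra structure theory must be built from nilsystems rather than affine toral systems. Note also that pulling $\rho_H$ back to $Z=\T^2\times K$ preserves type $2$, so the counterexample sits squarely inside the hypotheses of the lemma.

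A tell-tale sign of the overreach is that your endpoint is strictly stronger than the lemma: you arrive at $\sigma(u,k)=L(u)+g(k)$ with $L$ a continuous homomorphism, whereas the lemma only asserts that $\rho$ splits, up to cohomology, into a $K$-invariant cocycle $\rho^0$ (which may be an arbitrary, highly non-affine, type $2$ cocycle in the $U_1$ variable) plus a homomorphism on $K$. The paper accordingly never tries to normalize the $U_1$-direction at all: it applies the quasi-coboundary structure only to the difference $\partial_b\rho$ in the finite direction, telescopes over the cyclic group $K$ to pin down the transfer function $F$ up to a $K$-coboundary and an affine obstruction $\psi$, and then uses the type $2$ equations in the $U_1$-directions to force $\psi=0$; this makes $\partial_b\rho$ a constant $c$ with $rc=0$, and $\phi(b)=c$ gives the homomorphism while $\rho^0=\rho-\phi$ is $K$-invariant. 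Your handling of the purely finite directions (vanishing of the biadditive form on $U_1\times K$, and the divisibility bookkeeping making $g$ cohomologous to a homomorphism) is fine as far as it goes, but it all sits downstream of the false normalization, so the argument cannot be repaired without replacing its first step by one of the paper's kind.
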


\begin{proof}
Set $r=|K|$. For $z\in Z$, write $z=(x,t)$ for $x\in U_1$ and $t\in K$. For $s\in Z$ and $f\colon Z\rightarrow U$, we write $\partial_s f(x) = f(x+s)-f(x).$ 
    By~\cite[Lemma 8.1]{host2005nonconventional},
    we have that 
    $$\partial_b\rho = c-\partial_a F$$ for some $c\in U$ and measurable function $F\colon Z\rightarrow U$.
    Combining this equation with the telescoping identity $\sum_{i=0}^{r-1} \partial_b \rho(x,t+ib) =0 $, it follows that 
    \[r\cdot c = \partial_a \Bigl(\sum_{i=0}^{r-1} F(x,t+ib)\Bigr).
    \]
    Thus $rc$ is an eigenvalue of a $b$-invariant eigenfunction. 
    Since any eigenfunction is of the form $\psi\colon U_1\rightarrow U$ for some affine map $\psi$ (meaning that $\psi(x)-\psi(0)$ is a homomorphism from $U_1$ to $U$), we can write
    \begin{equation}\label{eq}\psi(x) = \sum_{i=0}^{r-1} F(x,t+ib).
    \end{equation}
    
    We claim that  $\psi$ determines $F$ up to a coboundary with respect to the action of $K$. More precisely, the claim is that 
     $$F(x,t) = \partial_b F'(x,t) + \binom{|t|}{r-1}\psi(x)$$ for some measurable map $F'\colon Z\rightarrow U$, where $\binom{|t|}{r-1}$ maps all cosets of $r\mathbb{Z}$ to $0$ other than the coset $(r-1)+r\mathbb{Z}$, which is mapped to $1$.

    To check the claim, note that $\sum_{i=0}^{r-1} \binom{|t+ib|}{r-1}\psi(x) = \psi(x)$ and so by~\eqref{eq} it follows that 
    \begin{equation} \label{torsionequation}
        \sum_{i=0}^{r-1} F(x,t+ib) - \binom{|t+ib|}{r-1}\psi(x) = 0.
    \end{equation}
   Defining $F'(x,jb) = \sum_{i=0}^{j-1} F(x,0+jb)- \binom{|jb|}{r-1}\psi(x)$ for all $0\leq j \leq r-1$, we have that~\eqref{torsionequation} implies  that $\partial_b F'(x,t) = F(x,t) - \binom{|t|}{r-1}\psi(x)$, proving the claim.

Thus by the claim, we may modify $\rho$ by the cohomologous cocycle $\rho-\partial_a F'$. Therefore, we can assume without loss of generality that $F(x,t) = \binom{|t|}{r-1}\psi(x)$ for some affine map $\psi\colon Z\rightarrow U$. In this case
    $$\partial_b \rho = c-\partial_b\left(\binom{|t|}{r-1}\psi(x)\right)=c - \binom{|t+b|}{r-1}\psi(x+a_1) + \binom{|t|}{r-1}\psi(x)$$ and $rc = \partial_{a_1} \psi(x)$. This equation determines $\rho$ up to a $b$-invariant function. Indeed, let $\sigma\colon Z\rightarrow U$ be the cocycle defined by 
   \[\sigma(x,jb) := \partial_{jb} \rho(x,0)=j\cdot c - \binom{|jb|}{r-1}\psi(x+a_1).
   \]
   Then $\partial_b \sigma(x,t) = \partial_b \rho(x,t)$ and so there is a map $f\colon U_1\rightarrow U$ such  that $\rho(x,t) = \sigma(x,t) + f(x)$.
   Since $\rho(x,t)$ is of type $2$, for every $s\in U_1$ 
   there is a constant $c_s\in U$ and a measurable map $F_s\colon Z\rightarrow U$ such that
\[\partial_s \sigma(x,t) + \partial_s f(x) = \partial_s \rho(x,t) = c_s - \partial_a F_s(x,t).\]
   When $t=0$, we have that $\sigma(x,0)=0$ and so $$\partial_s f(x) = c_s - \partial_a F_s(x,t.)$$
Choosing $t_0$ such that $t_0b=r-1$, we have $\sigma(x,t_0b)=t_0\cdot c-\psi(x+a_1)$. From this it follows that,
\[\partial_s \psi(x+a_1) +\partial_s f(x) = c_s - \partial_a F_s(x,t_0).\]
It follows that $\partial_s \psi(x+a_1)=\partial_s \psi(x)$ is a coboundary for all $s\in U_1$, but this can not happen unless $\psi = 0$. 

In this case,  $\partial_b \rho = c$, and again 
using the telescoping identity $\sum_{i=0}^{r-1} \partial_b \rho(x,t+ib) =0 $, we deduce  that $rc=0$. Hence, we can define a homomorphism $\phi\colon K\rightarrow U$ taking $b$ to $c$,  and then $\rho^0:=\rho-\phi$ is a $b$-invariant cocycle and $\rho$ is cohomologous to $\rho^0+\phi$.
\end{proof}

We use this to complete the proof of the lemma. 
\begin{proof}[Proof of Lemma~\ref{goodlemma}]
Let $k=1$ or $k=2$ and let $\XX=(G/\Gamma,\mathcal{B},\mu,T_\tau)$ be an ergodic $k$-step  nilsystem. 

Suppose first that $k=1$. Then $X$ is isomorphic to a rotation on a compact abelian Lie group $Z$ and we can write $Z=U_1\times K$ for some torus $U_1$ and  finite abelian cyclic group $K$, with 
the rotation $a=(a_1,b)$, where $a_1\in U_1$ and $b\in K$. Taking $r=|K|$, we have that $ra = r(a_1,0)$ and so $X$ is synchronized.

Now suppose that $k=2$, and write $X = \mathcal{G}(\XX)/\Gamma$ where $\mathcal{G}(\XX)$ is the Host-Kra group of $\XX$ (see~\cite[Section 5]{host2005nonconventional}). By~\cite[Section 8]{host2005nonconventional}, we have that $X = Z\times_\rho U$ where $Z$, $U$, and $\rho$ are as in Lemma~\ref{type2classification}. Thus, without loss of generality we may assume that $\rho = \rho^0 + \phi$, where $\rho^0$ is a $K$-invariant cocycle for a finite cyclic group $K$ and $\phi\colon K\to U$ is a homomorphism. Since $K$ has some finite order, it follows from the cocycle identity that there exists some $r\in \mathbb{N}$ such that $\rho(r,x) = \rho^0(r,x)$. The translation $T_\tau$ on $\XX$ is given by the element $\tau=S_{(a,\rho)}\in \mathcal{G}(\XX)$. On the other hand we also have $b:=S_{(a_1,0,\rho^0)}\in \mathcal{G}(\XX)$ since $\partial_{a_1}\rho  = \partial_a \rho^0$ (see~\cite[Lemma 8.7]{host2005nonconventional}) and the latter maps the connected component of $\XX$ to itself. Furthermore, it follows from the construction that $b^r=S_{((a_1,0),\rho^0)}^r = S_{(a,\rho)}^r=\tau^r$, as required.
\end{proof}

\bibliographystyle{abbrv}
\bibliography{bibliography}

\end{document}